%
%
%

\documentclass[twoside,11pt]{article}
\usepackage{a4wide,cite,latexsym,amsfonts,amssymb,exscale,epsfig,hyperref,url}
\usepackage[centertags,sumlimits,intlimits,namelimits,reqno]{amsmath}

\usepackage{amsthm}
\theoremstyle{definition}
\newtheorem{theorem}{Theorem}[section]
\newtheorem{lemma}[theorem]{Lemma}
\newtheorem{corollary}[theorem]{Corollary}
\newtheorem{proposition}[theorem]{Proposition}
\newtheorem{definition}[theorem]{Definition}
\newtheorem{remark}[theorem]{Remark}

\usepackage{bbm}

\def\N{{\mathbbm N}}

\def\Z{{\mathbbm Z}}

\def\ssl{{\mathfrak{sl}}}

\def\ie{{\sl i.e.\/}}

\def\cf{{\sl c.f.\/}}

\let\phi=\varphi
\let\theta=\vartheta
\let\epsilon=\varepsilon

\def\tr{\mathop{\rm tr}\nolimits}

\def\ev{\mathop{\rm ev}\nolimits}
\def\coev{\mathop{\rm coev}\nolimits}

\def\dim{\mathop{\rm dim}\nolimits}

\def\id{\mathop{\rm id}\nolimits}
\def\Span{\mathop{\rm span}\nolimits}
\def\End{\mathop{\rm End}\nolimits}

\def\Hom{\mathop{\rm Hom}\nolimits}

\def\lim{\mathop{\rm lim}\limits}

\let\hat=\widehat
\let\tilde=\widetilde

\def\pprime{{\prime\prime}}
\def\ppprime{{\prime\prime\prime}}

\numberwithin{equation}{section}

\pagestyle{myheadings}

\makeatletter

\newfont{\@aidxte}{cmsy10}
\newfont{\@aidxel}{cmsy10 scaled 1095}
\newfont{\@aidxtw}{cmsy10 scaled 1200}
\newlength\@aidxtexvi
\newlength\@aidxtexvii
\newlength\@aidxelxvi
\newlength\@aidxelxvii
\newlength\@aidxtwxvi
\newlength\@aidxtwxvii
\newcommand{\alignidx}[1]{%
\@aidxtexvi=\fontdimen16\@aidxte
\@aidxtexvii=\fontdimen17\@aidxte
\@aidxelxvi=\fontdimen16\@aidxel
\@aidxelxvii=\fontdimen17\@aidxel
\@aidxtwxvi=\fontdimen16\@aidxtw
\@aidxtwxvii=\fontdimen17\@aidxtw
{\mbox{$%
\fontdimen16\@aidxte=2.9pt
\fontdimen17\@aidxte=2.9pt
\fontdimen16\@aidxel=3.1pt
\fontdimen17\@aidxel=3.1pt
\fontdimen16\@aidxtw=3.3pt
\fontdimen17\@aidxtw=3.3pt
#1$}}%
\fontdimen16\@aidxte=\@aidxtexvi
\fontdimen17\@aidxte=\@aidxtexvii
\fontdimen16\@aidxel=\@aidxelxvi
\fontdimen17\@aidxel=\@aidxelxvii
\fontdimen16\@aidxtw=\@aidxtwxvi
\fontdimen17\@aidxtw=\@aidxtwxvii}

\makeatother

\newenvironment{myenumerate}{%
\begin{enumerate}
\setlength{\partopsep}{0pt}
\setlength{\parskip}{0pt}}{\end{enumerate}}

\def\nn{\notag}

\def\emph#1{{\sl #1\/}}
\def\sym#1{{\mathcal #1}}
\def\one{\mathbbm{1}}%
\def\bar#1{\overline{#1}}%

\def\op{\mathrm{op}}

\def\Vect{\mathbf{Vect}}
\def\fdVect{\mathbf{vect}}

\def\coend{\mathbf{coend}}

\def\msc#1{\noindent{\small Mathematics Subject Classification (2000):
#1\par}}%
\def\keywords#1{\noindent {\small keywords: #1\par}}%

\usepackage{tikz}
\usetikzlibrary{arrows}

\newenvironment{tikzformula}{%
\begin{aligned}\begin{tikzpicture}[scale=0.2]}{%
\end{tikzpicture}\end{aligned}}

\newcommand{\tikzsymb}[1]{%
\begin{tikzformula}#1\end{tikzformula}}

\newcommand{\leftcoev}[1]{%
\draw (3,0) arc (0:180:3);
\draw (5,0) arc (0:180:5);
\draw (3,0) -- (5,0);
\draw (-5,0) -- (-3,0);
\draw [very thick,->] (60:4) arc (60:120:4);
\draw (60:6.5) node {$#1$};
}

\newcommand{\rightcoev}[1]{%
\draw (3,0) arc (0:180:3);
\draw (5,0) arc (0:180:5);
\draw (3,0) -- (5,0);
\draw (-5,0) -- (-3,0);
\draw [very thick,->] (120:4) arc (120:60:4);
\draw (60:6.5) node {$#1$};
}

\newcommand{\leftev}[1]{%
\draw (3,0) arc (0:-180:3);
\draw (5,0) arc (0:-180:5);
\draw (3,0) -- (5,0);
\draw (-5,0) -- (-3,0);
\draw [very thick,->] (-60:4) arc (-60:-120:4);
\draw (-60:6.5) node {$#1$};
}

\newcommand{\rightev}[1]{%
\draw (3,0) arc (0:-180:3);
\draw (5,0) arc (0:-180:5);
\draw (3,0) -- (5,0);
\draw (-5,0) -- (-3,0);
\draw [very thick,->] (-120:4) arc (-120:-60:4);
\draw (-60:6.5) node {$#1$};
}

\newcommand{\identity}[1]{%
\draw (1,-4) -- (1,4) -- (-1,4) -- (-1,-4) -- cycle;
\draw [very thick,->] (0,2) -- (0,-2);
\draw (1.5,3) node [right] {$#1$};
}

\newcommand{\iddual}[1]{%
\draw (1,-4) -- (1,4) -- (-1,4) -- (-1,-4) -- cycle;
\draw [very thick,->] (0,-2) -- (0,2);
\draw (1.5,3) node [right] {$#1$};
}

\newcommand{\twistpos}[1]{%
\draw (- 1,-4) .. controls (- 1,-1) and (.8,-1) .. (.8,0);
\draw (-.8, 0) .. controls (-.8, 1) and ( 1, 1) .. ( 1,4);
\filldraw [fill=white,draw=white] (1.3,-4) .. controls (1.3,-1.3) and (-.5,- .7) .. (-.5,0)
-- (-1.1,0) .. controls (-1.1,- .7) and (.7,-1.3) .. (.7,-4) -- cycle;
\filldraw [fill=white,draw=white] (1.1, 0) .. controls (1.1,  .7) and (-.7, 1.3) .. (-.7,4)
-- (-1.3,4) .. controls (-1.3, 1.3) and (.5,  .7) .. (.5, 0) -- cycle;
\draw ( 1,-4) .. controls ( 1,-1) and (-.8,-1) .. (-.8,0);
\draw (.8, 0) .. controls (.8, 1) and (- 1, 1) .. (- 1,4);
\draw (-1, 4) -- (1, 4);
\draw (-1,-4) -- (1,-4);
\draw [very thick,->] (0,3.5) -- (0,2);
\draw (1.5,3) node [right] {$#1$};
}

\newcommand{\twistneg}[1]{%
\draw ( 1,-4) .. controls ( 1,-1) and (-.8,-1) .. (-.8,0);
\draw (.8, 0) .. controls (.8, 1) and (- 1, 1) .. (- 1,4);
\filldraw [fill=white,draw=white] (-1.3,-4) .. controls (-1.3,-1.3) and (.5,- .7) .. (.5,0)
-- (1.1,0) .. controls (1.1,- .7) and (-.7,-1.3) .. (-.7,-4) -- cycle;
\filldraw [fill=white,draw=white] (-1.1, 0) .. controls (-1.1,  .7) and (.7, 1.3) .. (.7,4)
-- (1.3,4) .. controls (1.3, 1.3) and (-.5,  .7) .. (-.5, 0) -- cycle;
\draw (- 1,-4) .. controls (- 1,-1) and (.8,-1) .. (.8,0);
\draw (-.8, 0) .. controls (-.8, 1) and ( 1, 1) .. ( 1,4);
\draw (-1, 4) -- (1, 4);
\draw (-1,-4) -- (1,-4);
\draw [very thick,->] (0,3.5) -- (0,2);
\draw (1.5,3) node [right] {$#1$};
}

\newcommand{\twistposdual}[1]{%
\draw (- 1,-4) .. controls (- 1,-1) and (.8,-1) .. (.8,0);
\draw (-.8, 0) .. controls (-.8, 1) and ( 1, 1) .. ( 1,4);
\filldraw [fill=white,draw=white] (1.3,-4) .. controls (1.3,-1.3) and (-.5,- .7) .. (-.5,0)
-- (-1.1,0) .. controls (-1.1,- .7) and (.7,-1.3) .. (.7,-4) -- cycle;
\filldraw [fill=white,draw=white] (1.1, 0) .. controls (1.1,  .7) and (-.7, 1.3) .. (-.7,4)
-- (-1.3,4) .. controls (-1.3, 1.3) and (.5,  .7) .. (.5, 0) -- cycle;
\draw ( 1,-4) .. controls ( 1,-1) and (-.8,-1) .. (-.8,0);
\draw (.8, 0) .. controls (.8, 1) and (- 1, 1) .. (- 1,4);
\draw (-1, 4) -- (1, 4);
\draw (-1,-4) -- (1,-4);
\draw [very thick,->] (0,2) -- (0,3.5);
\draw (1.5,3) node [right] {$#1$};
}

\newcommand{\twistnegdual}[1]{%
\draw ( 1,-4) .. controls ( 1,-1) and (-.8,-1) .. (-.8,0);
\draw (.8, 0) .. controls (.8, 1) and (- 1, 1) .. (- 1,4);
\filldraw [fill=white,draw=white] (-1.3,-4) .. controls (-1.3,-1.3) and (.5,- .7) .. (.5,0)
-- (1.1,0) .. controls (1.1,- .7) and (-.7,-1.3) .. (-.7,-4) -- cycle;
\filldraw [fill=white,draw=white] (-1.1, 0) .. controls (-1.1,  .7) and (.7, 1.3) .. (.7,4)
-- (1.3,4) .. controls (1.3, 1.3) and (-.5,  .7) .. (-.5, 0) -- cycle;
\draw (- 1,-4) .. controls (- 1,-1) and (.8,-1) .. (.8,0);
\draw (-.8, 0) .. controls (-.8, 1) and ( 1, 1) .. ( 1,4);
\draw (-1, 4) -- (1, 4);
\draw (-1,-4) -- (1,-4);
\draw [very thick,->] (0,2) -- (0,3.5);
\draw (1.5,3) node [right] {$#1$};
}

\newcommand{\braiddd}[2]{%
\draw (3,4) .. controls (3,2) and (-5,-1) .. (-5,-4)
-- (-3,-4) .. controls (-3,-2) and (5,1) .. (5,4) -- cycle;
\draw [very thick,->] (2,1.45) -- (-2,-1.45);
\filldraw [fill=white,draw=black] (-5,4) .. controls (-5,1) and (3,-2) .. (3,-4)
-- (5,-4) .. controls (5,-1) and (-3,2) .. (-3,4) -- cycle;
\draw [very thick,->] (-2,1.45) -- (2,-1.45);
\draw (-5.5,3) node [left] {$#1$};
\draw (5.5,3) node [right] {$#2$};
}

\newcommand{\braidinvdd}[2]{%
\draw (-5,4) .. controls (-5,1) and (3,-2) .. (3,-4)
-- (5,-4) .. controls (5,-1) and (-3,2) .. (-3,4) -- cycle;
\draw [very thick,->] (-2,1.45) -- (2,-1.45);
\filldraw [fill=white,draw=black] (3,4) .. controls (3,2) and (-5,-1) .. (-5,-4)
-- (-3,-4) .. controls (-3,-2) and (5,1) .. (5,4) -- cycle;
\draw [very thick,->] (2,1.45) -- (-2,-1.45);
\draw (-5.5,3) node [left] {$#1$};
\draw (5.5,3) node [right] {$#2$};
}

\newcommand{\righttwist}[1]{%
\draw (0,2) -- (0,0)
arc (0:-280:.5);
\draw (0,-2) .. controls (0,-1) and (0.5,-1) .. (0.5,0)
arc (0:50:.5);
\draw (0,2) node [right] {$#1$};
}

\newcommand{\lefttwist}[1]{%
\draw (0,2) -- (0,0)
arc (180:360:.5)
arc (0:100:.5);
\draw (0,-2) .. controls (0,-1) and (-0.5,-1) .. (-0.5,0)
arc (180:130:.5);
\draw (0,2) node [right] {$#1$};
}

\newcommand{\kirbyplus}[2]{%
\draw (1,2) -- (1,0)
arc (0:-280:.5);
\draw (1,-2) .. controls (1,-1) and (1.5,-1) .. (1.5,0)
arc (0:50:.5);
\draw (1,2) arc (0:75:1.5);
\draw (1,-2) arc (0:-180:1.5) -- (-2,2) arc (180:105:1.5);
\draw (-0.5,5) -- (-0.5,-3.1);
\draw (-0.5,-3.9) -- (-0.5,-5);
\draw (-0.5,6) node {$#1$};
\draw (1,2) node [right] {$#2$};
}

\newcommand{\kirbyminus}[2]{%
\draw (1,2) -- (1,0)
arc (180:360:.5)
arc (0:100:.5);
\draw (1,-2) .. controls (1,-1) and (0.5,-1) .. (0.5,0)
arc (180:130:.5);
\draw (1,2) arc (0:75:1.5);
\draw (1,-2) arc (0:-180:1.5) -- (-2,2) arc (180:105:1.5);
\draw (-0.5,5) -- (-0.5,-3.1);
\draw (-0.5,-3.9) -- (-0.5,-5);
\draw (-0.5,6) node {$#1$};
\draw (1,2) node [right] {$#2$};
}

\newcommand{\hopflink}[2]{%
\draw (-30:2) + (-2,0) arc (-30:305:2);
\draw (125:2) + (1,0) arc (125:-210:2);
\draw (-4.5,1) node {$#1$};
\draw (3,1) node [right] {$#2$};
}

\newcommand{\triplelink}[3]{%
\draw (-30:2) + (-3,0) arc (-30:270:2);
\draw (90:0.5) + (-3,-2.5) arc (90:-210:0.5);
\draw (290:2) + (-3,0) arc (290:305:2);
\draw (125:2) arc (120:55:2);
\draw (30:2) arc (30:-70:2);
\draw (150:2) arc (150:270:2);
\draw (90:0.5) + (0,-2.5) arc (90:-210:0.5);
\draw (235:2) + (3,0) arc (235:270:2);
\draw (90:0.5) + (3,-2.5) arc (90:-210:0.5);
\draw (-70:2) + (3,0) arc (-70:210:2);
\draw (-5.5,1) node {$#1$};
\draw (0,3) node {$#2$};
\draw (5,1) node [right] {$#3$};
}

\newcommand{\trace}[2]{%
\draw (-2,2) -- (-2,-2) -- (2,-2) -- (2,2) -- cycle;
\draw [->] (0,-2) arc (180:360:2) -- (4,0);
\draw (4,0) -- (4,2) arc (0:180:2);
\draw (7,2) node [left] {$#1$};
\draw (0,0) node {$#2$};
}

\newcommand{\partialtrace}[3]{%
\draw (-2.5,2) -- (-2.5,-2) -- (2.5,-2) -- (2.5,2) -- cycle;
\draw [->] (-1,4) -- (-1,3);
\draw (-1,3) -- (-1,2);
\draw [->] (-1,-2) -- (-1,-3);
\draw (-1,-3) -- (-1,-4);
\draw [->] (1,-2) arc (180:360:1.5) -- (4,0);
\draw (4,0) -- (4,2) arc (0:180:1.5);
\draw (-4,3.5) node [right] {$#1$};
\draw (7,2) node [left] {$#2$};
\draw (0,0) node {$#3$};
}

\newcommand{\inductionstart}[3]{%
\draw (-2,6) -- (-2,2) -- (2,2) -- (2,6) -- cycle;
\draw (0,2) -- (0,-2);
\draw (-2,-2) -- (-2,-6) -- (2,-6) -- (2,-2) -- cycle;
\draw [->] (0,-6) arc (180:360:2) -- (4,0);
\draw (4,0) -- (4,6) arc (0:180:2);
\draw (7,2) node [left] {$#1$};
\draw (0,4) node {$#2$};
\draw (0,-4) node {$#3$};
}

\newcommand{\inductionstep}[5]{%
\draw [->] (-1,4) -- (-1,3);
\draw (-1,3) -- (-1,2);
\draw (-3.5,2) -- (3.5,2) -- (3.5,-2) -- (-3.5,-2) -- cycle;
\draw (-1,-2) -- (-1,-13);
\draw (1,-2) -- (1,-4);
\draw (-0.5,-4) -- (5.5,-4) -- (5.5,-7) -- (-0.5,-7) -- cycle;
\draw (1,-7) -- (1,-8);
\draw (-0.5,-8) -- (5.5,-8) -- (5.5,-11) -- (-0.5,-11) -- cycle;
\draw [->] (1,-11) arc (180:270:2) -- (5,-13) arc (270:360:2) -- (7,0);
\draw (7,0) -- (7,2) arc (0:90:2) -- (3,4) arc (90:180:2);
\draw (-6,3.5) node [right] {$#1$};
\draw (10,2) node [left] {$#2$};
\draw (2.5,-5.5) node {$#3$};
\draw (2.5,-9.5) node {$#4$};
\draw (0,0) node {$#5$};
}

\markboth{\scshape{The Hennings--Kauffman--Radford and the Reshetikhin--Turaev invariant}}
{\scshape{The Hennings--Kauffman--Radford and the Reshetikhin--Turaev invariant}}

\begin{document}

\title{Weak Hopf Algebras unify the Hennings--Kauffman--Radford\\
and the Reshetikhin--Turaev invariant}
\author{Hendryk Pfeiffer\thanks{E-mail: \texttt{pfeiffer@math.ubc.ca}}}
\date{\small{Department of Mathematics, The University of British Columbia,\\
1984 Mathematics Road, Vancouver, BC, V2T 1Z2, Canada}\\[1ex]
February 24, 2012}

\maketitle

\begin{abstract}

We present an invariant of connected and oriented closed $3$-manifolds based
on a coribbon Weak Hopf Algebra $H$ with a suitable left-integral. Our
invariant can be understood as the generalization to Weak Hopf Algebras of
the Hennings--Kauffman--Radford evaluation of an unoriented framed link
using a dual quantum-trace. This quantum trace satisfies conditions that
render the link evaluation invariant under Kirby moves. If $H$ is a suitable
finite-dimensional Hopf algebra (not weak), our invariant reduces to the
Kauffman--Radford invariant for the dual of $H$. If $H$ is the Weak Hopf
Algebra Tannaka--Kre\v\i n reconstructed from a modular category $\sym{C}$,
our invariant agrees with the Reshetikhin--Turaev invariant. In particular,
the proof of invariance of the Reshetikhin--Turaev invariant becomes as
simple as that of the Kauffman--Radford invariant. Modularity of $\sym{C}$
is only used once in order to show that the invariant is non-zero; apart
from this, a fusion category with ribbon structure would be sufficient. Our
generalization of the Kauffman--Radford invariant for a Weak Hopf Algebra
$H$ and the Reshetikhin--Turaev invariant for its category of
finite-dimensional comodules $\sym{C}\simeq\sym{M}^H$ always agree by
construction. There is no need to consider a quotient of the representation
category modulo 'negligible morphisms' at any stage, and our construction
contains the Reshetikhin--Turaev invariant for an arbitrary modular category
$\sym{C}$, whether its relationship with some quantum group is known or not.
\end{abstract}

\msc{%
57M27,
16T05,
18D10
}
\keywords{$3$-manifold, quantum invariant, modular category, Weak Hopf Algebra}

\section{Introduction}

In some special cases, the following two quantum invariants of connected and
oriented closed (smooth) $3$-manifolds are related: the invariant of
Reshetikhin--Turaev~\cite{ReTu91} and the invariant of
Hennings~\cite{He96}. Since the latter has been studied mainly for
finite-dimensional unimodular ribbon Hopf algebras, we focus on the
reformulation of the Hennings invariant according to Kauffman and
Radford~\cite{KaRa95}. This reformulation exploits the fact that these special
Hopf algebras have a unique cointegral with suitable properties, and this
results in a substantial simplification of Hennings' original construction.

It has already been demonstrated by Hennings~\cite{He96} that the
Reshetikhin--Turaev invariant for the modular categories associated with
$U_q(\ssl_2)$ at suitable roots of unity $q$ appears as a special case
of his construction. In order to show this, one needs to understand the
quotient of the category of tilting modules over $U_q(\ssl_2)$ modulo
so-called \emph{negligible} morphisms. This quotient is finitely semisimple
and has the structure of a modular category.

This example of Hennings raises the question of whether the agreement of the
Hennings invariant with the Reshetikhin--Turaev invariant is a coincidence or
whether there is an explanation in conceptual terms. One answer to this
question was given by Lyubashenko~\cite{Ly95}. Starting from the fusion
category with ribbon structure $\sym{C}$ used in the Reshetikhin--Turaev
invariant, he uses Majid's~\cite{Ma93} universal coend
$F=\coend(\sym{C},1_\sym{C})$ over the identity functor
$1_\sym{C}\colon\sym{C}\to\sym{C}$ which forms a Hopf algebra object
$F\in|\sym{C}|$ (\emph{braided group} in Majid's terminology). If
$\sym{C}\simeq{}_H\sym{M}$ is the category of modules over a
finite-dimensional Hopf algebra $H$, Lyubashenko's invariant for
$F\in|\sym{C}|$ coincides with the Kauffman--Radford invariant for $H$. Let us
summarize the idea of Lyubashenko's unification of the invariants as follows:
He forces the Hopf algebra $H$ of the Kauffman--Radford invariant into the
language of the Reshetikhin invariant by transmuting~\cite[Section 4]{Ma93} it
into a Hopf algebra object $F\in|\sym{C}|$.

The purpose of the present article is to reverse this approach and to present
a second way of unifying the Reshetikhin--Turaev with the Kauffman--Radford
invariant, this time by forcing the fusion category with ribbon structure
$\sym{C}$ into the language of the Kauffman--Radford invariant. Thanks to the
recent generalization of Tannaka--Kre\v\i n reconstruction to fusion
categories~\cite{Pf09a,Pf11}, we know that each fusion category $\sym{C}$, in
particular each modular category, is equivalent to the category of
finite-dimensional comodules $\sym{C}\simeq\sym{M}^H$ over a Weak Hopf Algebra
(WHA) $H=\coend(\sym{C},\omega)$, the universal coend over the long canonical
functor $\omega\colon\sym{C}\to\Vect_k$. We are therefore able to recover
$\sym{C}$ from $H$ and can therefore express the Reshetikhin--Turaev invariant
entirely in terms of $H$, \ie\ in the language of the Kauffman--Radford
invariant, something that is not possible in Lyubashenko's approach. One of
the advantages of this point of view is that it renders the proof of
invariance of the Reshetikhin--Turaev invariant as easy as that of the
Kauffman--Radford invariant whereas Lyubashenko's approach renders the proof
of invariance of the Kauffman--Radford invariance as difficult as that of the
Reshetikhin--Turaev invariant.

Let us now sketch how one can find a (co)algebra with extra structure $H$ such
that the Reshetikhin--Turaev invariant for $\sym{C}$ agrees with the
Kauffman--Redford invariant for $H$. It appears that all modular categories
that yield interesting $3$-manifold invariants, \ie\ invariants that are
stronger than invariants of homotopy type, have objects of non-integer
Frobenius--Perron dimension\footnote{This is an observation about the existing
literature, and we are not aware of any counter-example.}. These categories
therfore do not form the categories of modules over any Hopf algebra, see, for
example~\cite[Theorem 8.33]{EtNi05}.  The naive conjecture that the Hennings
invariant for a Hopf algebra $H$ might agree with the Reshetikhin--Turaev
invariant for the category ${}_H\sym{M}$ of modules over $H$, is therefore not
even well phrased.

Any conjecture on a coincidence of the Hennings with the Reshetikhin--Turaev
invariant has a chance of being true only if it is not ${}_H\sym{M}$ itself,
but rather a quotient of ${}_H\sym{M}$ modulo suitable negligible morphisms,
that forms the modular category. This is how Hennings' original example
works. But since there exist modular categories for which no relationship with
one of the standard quantum groups is known~\cite{HoRo08}, any conceptual
approach to relating the Hennings with the Reshetikhin--Turaev invariant needs
to avoid taking a quotient of the representation category.

Although the topologically interesting modular categories are not the
categories of modules over any Hopf algebra, every modular category is the
category of (co)modules over a Weak Hopf Algebra (WHA)~\cite{Pf09a} --- no
quotient required: for each modular category $\sym{C}$, one can
Tannaka--Kre\v\i n reconstruct a WHA $H$ whose category of finite-dimensional
comodules $\sym{M}^H$ is equivalent as a $k$-linear additive ribbon category
to the original modular category $\sym{C}$.

In the present article, we define an invariant of $3$-manifolds for a suitable
class of WHAs (see Theorem~\ref{thm_invariant} below). Our invariant can be
understood as the generalization of the Kauffman--Radford invariant from Hopf
algebras to WHAs. In the special case in which the WHA $H$ is a Hopf algebra,
our invariant reduces to the Kauffman--Radford invariant (for the
dual\footnote{For technical reasons, we work with comodules rather than
modules, and so our WHA $H$ corresponds to the dual of the Hopf algebra
featuring in the Kauffman--Radford invariant, and it is the category of
finite-dimensional comodules $\sym{M}^H$ of $H$ that forms the modular
category.} of $H$). In the special case in which $H$ is the WHA
reconstructed from a modular category $\sym{C}$, our invariant agrees with the
Reshetikhin--Turaev invariant for $\sym{C}$.

In fact, by construction, our generalization of the Kauffman--Radford
invariant for a WHA $H$ always agrees with the Reshetikhin--Turaev invariant
for the ribbon category $\sym{M}^H$. Modularity of $\sym{M}^H$ is sufficient
for the invariant to be non-zero. As a by-product, we obtain a new proof of
invariance of the Reshetikhin--Turaev invariant for an arbitrary modular
category $\sym{C}\simeq\sym{M}^H$ using computations in $H$ rather than
computations in $\sym{C}$. This new proof is substantially shorter than the
original proof presented in~\cite[Section II.3]{Tu10}.

In the near future, there will be a companion article relating the
Turaev--Viro invariant~\cite{TuVi92} with the Kuperberg invariant~\cite{Ku91}
along the same lines. Apart from tidying up some twenty-year-old results,
these identities between quantum invariants can be expected to prove useful if
one tries to categorify these invariants. Whereas categorifying a modular
category to some $2$-category with extra structure is still far beyond reach,
categorifying the reconstructed WHAs appears to be much more promising. In
particular, the WHAs reconstructed from the modular categories associated with
$U_q(\ssl_2)$ are substantially easier to understand than
$U_q(\ssl_2)$ at roots of unity itself together with the relevant
quotient of its category of modules.

The present article is structured as follows. In Section~\ref{sect_prelim}, we
briefly summarize some background material on WHAs and on Tannaka--Kre\v\i n
reconstruction. Section~\ref{sect_reconstructed} contains results on integrals
and cointegrals in the WHA reconstructed from a fusion category with ribbon
structure. We then define various ways of evaluating unoriented framed links
in $S^3$ in Section~\ref{sect_ribbon}. Our new invariant is presented in
Section~\ref{sect_invariant} in which we also show that in encompasses both
the Reshetikhin--Turaev invariant and the Kauffman--Radford
invariant. Appendix~\ref{app_wha} contains more details on WHAs with extra
structure, and Appendix~\ref{app_reconstruction} on their Tannaka--Kre\v\i n
reconstruction from fusion categories with extra structure.

\section{Preliminaries}
\label{sect_prelim}

In this section, we fix our notation and sketch some background
material on Weak Hopf Algebras, their categories of finite-dimensional
comodules, and on the canonical Weak Hopf Algebra associated to each
modular category via generalized Tannaka--Kre\v\i n reconstruction.

We use the following notation. If $\sym{C}$ is a category, we write
$X\in|\sym{C}|$ for the objects $X$ of $\sym{C}$, $\Hom(X,Y)$ for the
collection of all morphisms $f\colon X\to Y$ and $\End(X)=\Hom(X,X)$. We
denote the identity morphism of $X$ by $\id_X\colon X\to X$ and the
composition of morphisms $f\colon X\to Y$ and $g\colon Y\to Z$ by $g\circ
f\colon X\to Z$. If two objects $X,Y\in|\sym{C}|$ are isomorphic, we write
$X\cong Y$. If two categories $\sym{C}$ and $\sym{D}$ are equivalent, we write
$\sym{C}\simeq\sym{D}$. The identity functor on $\sym{C}$ is denoted by
$1_{\sym{C}}$. The category of vector spaces over a field $k$ is denoted by
$\Vect_k$ and its full subcategory of finite-dimensional vector spaces by
$\fdVect_k$. Both are $k$-linear, abelian and symmetric monoidal, and
$\fdVect_k$ is autonomous. The $n$-fold tensor power of some object
$X\in|\sym{C}|$ of a monoidal category
$(\sym{C},\otimes,\one,\alpha,\lambda,\rho)$ is denoted by $X^{\otimes n}$,
$n\in\N_0$. We set $X^{\otimes 0}:=\one$. We use the notation $\N$ and $\N_0$
for the positive integers and the non-negative integers, respectively.

\subsection{Weak Hopf Algebras and their corepresentations}

For the basics of Weak Bialgebras (WBAs) and Weak Hopf Algebras (WHAs), we
refer to~\cite{BoNi99,BoSz00} and to Appendix~\ref{app_wha}. In a WHA $H$ over
some field $k$, we denote by $\mu\colon H\otimes H\to H$, $\eta\colon k\to H$,
$\Delta\colon H\to H\otimes H$, $\epsilon\colon H\to k$ and $S\colon H\to H$
the multiplication, unit, comultiplication, counit and antipode,
respectively. The source and target counital maps are denoted by $\epsilon_s$
and $\epsilon_t$, and the source and target base algebras by $H_s$ and $H_t$,
respectively. The opposite comultiplication is given by
$\Delta^\op=\tau_{H,H}\circ\Delta$ where $\tau_{H,H}(x\otimes y)=y\otimes x$
for all $x,y\in H$.

The category of right $H$-comodules that are finite-dimensional over $k$, is
denoted by $\sym{M}^H$. It is a $k$-linear abelian and left-autonomous
monoidal category equipped with a $k$-linear, faithful and exact forgetful
functor $U^H\colon\sym{M}^H\to\fdVect_k$. This functor is in general not
strong monoidal and thereby not a fibre functor in the technical sense, but it
is equipped with a separable Frobenius structure.

A copivotal form $w\colon H\to k$ for $H$ is a dual group-like linear form
such that $S^2(x)=w(x^\prime)x^\pprime\bar w(x^\ppprime)$ for all $x\in H$,
\ie\ one that implements the square of the antipode by dual conjugation. Here
$\bar w\colon H\to k$ denotes the convolution inverse of $w$. We call such a
linear form $w$ \emph{copivotal} because it is this structure that renders the
category $\sym{M}^H$ a pivotal category.

The universal $r$-form of a coquasi-triangular WHA and its weak convolution
inverse are denoted by $r\colon H\otimes H\to k$ and $\bar r\colon H\otimes
H\to k$, respectively. Similarly, $\nu\colon H\to k$ and $\bar\nu\colon H\to
k$ denote the universal ribbon form and its convolution inverse in a coribbon
WHA. Recall that each coribbon WHA is copivotal with
$w(x)=v(x^\prime)\nu(x^\pprime)$ for all $x\in H$, involving the second dual
Drinfel'd element $v\colon H\to k, x\mapsto r(S(x^\prime)\otimes x^\pprime)$
and the universal ribbon form $\nu$.

For the convenience of the reader, we have collected in Appendix~\ref{app_wha}
the basic definitions and more detailed references to the literature as well
as the basic facts about the relevant additional structure on $\sym{M}^H$. For
example, if $H$ is coribbon, then $\sym{M}^H$ is a ribbon category. If $H$ is
finite-dimensional, split cosemisimple and pure, then $\sym{M}^H$ is fusion,
and if $H$ is in addition coribbon and weakly cofactorizable, then $\sym{M}^H$
forms a modular category.

\subsection{Tannaka--Kre\v\i n reconstruction}

For every multi-fusion category $\sym{C}$ that is $k$-linear over some
field $k$, there is a canonical functor
\begin{equation}
\label{eq_longfunctor}
\omega\colon\sym{C}\to\Vect_k,\quad X\mapsto \Hom_k(\hat V,\hat V\otimes X).
\end{equation}
Here we use the small progenerator
\begin{equation}
\label{eq_progenerator}
\hat V=\bigoplus_{j\in I}V_j,
\end{equation}
where the biproduct is over a set $I$ of one representative $V_j$, $j\in I$,
for each isomorphism class of simple objects of $\sym{C}$. The functor
$\omega$ is known as the \emph{long canonical functor}~\cite{Ha99b,Sz05} and
can be used in order to Tannaka--Kre\v\i n reconstruct a finite-dimensional
split cosemisimple coassociative counital coalgebra $H$ over $k$. It is given
by the universal coend $H=\coend(\sym{C},\omega)$ of $\omega$, and
$\sym{M}^H\simeq\sym{C}$ are equivalent as $k$-linear additive
categories. Since $\omega$ has a separable Frobenius
structure~\cite{Pf09a,Pf11}, $H$ forms a WHA, and $\sym{M}^H\simeq\sym{C}$ are
equivalent as monoidal categories as well.

If $\sym{C}$ carries a pivotal, spherical or ribbon structure, then $H$ is
copivotal, cospherical or coribbon, respectively. If $\sym{C}$ is fusion, then
$H$ is copure, and if $\sym{C}$ is modular, then $H$ is weakly
cofactorizable~\cite{Pf09a,Pf09b}. In all of these cases, the equivalence
$\sym{M}^H\simeq\sym{C}$ is compatible with the extra structure. In
Appendix~\ref{app_reconstruction}, we have compiled more details on how the
extra structure of $H$ is related to that of $\sym{C}$ and on how to perform
computations in $H$.

Note in particular that $\omega X=\Hom_k(\hat V,\hat V\otimes X)$ and
$\Hom_k(\hat V\otimes X,\hat V)$ are dually paired for all $X\in|\sym{C}|$,
see~\eqref{eq_gx}, and that most computations can be conveniently phrased in
terms of pairs of dual bases ${(e_m^{(X)})}_m$ and ${(e^m_{(X)})}_m$ of
$\omega X$ and ${(\omega X)}^\ast$, respectively. The vector space underlying
the reconstructed WHA $H$ is given in~\eqref{eq_coendvect}. Also note the
reconstruction of the copivotal form $w$ of~\eqref{eq_copivotal} and the
isomorphism $D_{\hat V}\in\End(\hat V)$ of~\eqref{eq_dtransformation}
involved.

\section{The reconstructed Weak Hopf Algebra}
\label{sect_reconstructed}

In order to see how our invariant encompasses the Reshetikhin--Turaev
invariant~\cite{ReTu91}, we need to develop some of the integral
theory of the WHA reconstructed from a modular category
$\sym{C}$. This is done in the present section. For background
material on the integral theory of WHAs, we refer to~\cite{BoNi99}.

Let $H$ be a coribbon WHA. A \emph{dual trace} is an element $\chi\in H$ such
that $\Delta^\op(\chi)=\Delta(\chi)$. It is called $S$-invariant if
$S(\chi)=\chi$. A \emph{dual quantum trace} is an element $t\in H$ such that
$\Delta^\op(t)=(\id_H\otimes S^2)\circ\Delta(t)$. A dual quantum trace $t\in
H$ is called $S$-\emph{compatible} if
\begin{equation}
\label{eq_scompatible}
\bar w(t^\prime)S(t^\pprime)=\bar w(t^\prime)t^\pprime.
\end{equation}
Observe that $t=w(\chi^\prime)\chi^\pprime$ is a dual quantum trace if and
only if $\chi$ is a dual trace. In this situation, $t$ is $S$-compatible if
and only if $\chi$ is $S$-invariant.

Note that for each $V\in|\sym{M}^H|$, its dual character $\chi_V\in H$
(see~\eqref{eq_dualchar}) forms a dual trace, and its dual quantum character
$T_V\in H$ (see~\eqref{eq_dualqchar}) is a dual quantum trace. Observe that
$S(\chi_V)=\chi_{V^\ast}$ and if $V^\ast\cong V$ in $\sym{M}^H$, then we have
in addition that $\chi_{V^\ast}=\chi_V$, \ie\ the dual character $\chi_V$ is
$S$-invariant.

An element $t\in H$ of a finite-dimensional WHA is called
\emph{non-degenerate} if $H^\ast\to k,\phi\mapsto\phi(t)$ is non-degenerate as
a functional on the dual WHA $H^\ast$, \ie\ its kernel does not contain any
non-zero left-ideal of $H^\ast$. This holds if and only if the bilinear form
$H^\ast\otimes H^\ast\to
k,\phi\otimes\psi\mapsto\phi(t^\prime)\psi(t^\pprime)$ is non-degenerate. A
\emph{left-integral} $\ell\in H$ is an element that satisfies
$x\ell=\epsilon_t(x)\ell$ for all $x\in H$. A \emph{right-integral} $r\in H$
is an element that satisfies $rx=r\epsilon_s(x)$ for all $x\in H$. A
\emph{two-sided integral} is both a left- and a right-integral. A
[left-,right-]cointegral of $H$ is a [left-,right-]integral of $H^\ast$.

A linear form $\zeta\colon H\to k$ is called \emph{dual central} if
$\zeta(x^\prime)x^\pprime=x^\prime\zeta(x^\pprime)$ for all $x\in H$. If $H$
is the WHA reconstructed from a multi-fusion category $\sym{C}$, then it is
split cosemisimple, and so its dual central linear forms can be computed as
follows.

\begin{proposition}
\label{prop_dualcentral}
Let $\sym{C}$ be a multi-fusion category over the field $k$ and
$H=\coend(\sym{C},\omega)$ be the finite-dimensional and split cosemisimple
WHA reconstructed from $\sym{C}$ using the long canonical
functor~\eqref{eq_longfunctor}. Then a basis for the vector space of all dual
central linear forms is given by ${\{\zeta_j\}}_{j\in I}$ where
\begin{equation}
\zeta_j({[\theta|v]}_{X}) = \left\{\begin{array}{ll}
\epsilon({[\theta|v]}_X),\quad&\mbox{if}\quad X\cong V_j,\\
0,\quad&\mbox{else},
\end{array}\right.
\end{equation}
for all simple $X\in|\sym{C}|$. A dual central linear form $\zeta=\sum_{j\in
I}c_j\zeta_j$ with coefficients $c_j\in k$ is convolution invertible if and
only if $c_j\neq 0$ for all $j\in I$.
\end{proposition}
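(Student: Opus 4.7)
The plan is to exploit split cosemisimplicity of $H$ and reduce the claim to a block-by-block computation on matrix subcoalgebras. Because $\sym{C}$ is multi-fusion, the reconstructed WHA $H$ is finite-dimensional and split cosemisimple, so as a coalgebra $H\cong\bigoplus_{j\in I}C_j$ decomposes into a direct sum of matrix subcoalgebras, one for each simple object $V_j$; the block $C_j$ consists of those $[\theta|v]_X$ with $X\cong V_j$. This decomposition is respected by the coproduct, $\Delta(C_j)\subseteq C_j\otimes C_j$, so the defining equation $\zeta(x^\prime)x^\pprime=x^\prime\zeta(x^\pprime)$ for dual centrality of a linear form $\zeta\colon H\to k$ separates into independent conditions on each block.

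On a single block, pick a matrix-coalgebra basis ${(e^{(j)}_{ab})}_{a,b}$ with $\Delta(e^{(j)}_{ab})=\sum_c e^{(j)}_{ac}\otimes e^{(j)}_{cb}$ and $\epsilon(e^{(j)}_{aa})=1$, $\epsilon(e^{(j)}_{ab})=0$ for $a\neq b$, obtained from the dual-basis pairing of $\omega V_j$ and ${(\omega V_j)}^\ast$ recalled in Section~\ref{sect_prelim}. Comparing coefficients in $\zeta(x^\prime)x^\pprime=x^\prime\zeta(x^\pprime)$ for $x=e^{(j)}_{ab}$ forces $\zeta(e^{(j)}_{ab})=0$ whenever $a\neq b$ and $\zeta(e^{(j)}_{aa})$ to be independent of $a$, so the space of dual central forms on $C_j$ is one-dimensional and is spanned by the restricted counit $\epsilon|_{C_j}$. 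Extending by zero to the remaining blocks defines $\zeta_j$; the collection ${\{\zeta_j\}}_{j\in I}$ is linearly independent because its members have disjoint supports, and spans the dual central forms by the per-block conclusion. Unwinding the identification with the coend presentation recovers the explicit formula stated.

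For convolution invertibility, note that because $\Delta$ preserves the block decomposition the $\zeta_j$ are orthogonal idempotents under convolution, $\zeta_i*\zeta_j=\delta_{ij}\,\zeta_j$, and the counit splits as $\epsilon=\sum_{j\in I}\zeta_j$, which is the unit of convolution. Hence for $\zeta=\sum_j c_j\zeta_j$ the form $\sum_j c_j^{-1}\zeta_j$ is a two-sided convolution inverse whenever every $c_j\neq 0$. Conversely, if $c_{j_0}=0$ for some $j_0$, then $\zeta$ vanishes identically on $C_{j_0}$, so $\zeta*\psi$ and $\psi*\zeta$ both vanish on $C_{j_0}$ for every $\psi\colon H\to k$, whereas $\epsilon$ does not, and no convolution inverse can exist.

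The only real subtlety I anticipate is the bookkeeping needed to translate between the abstract matrix-coalgebra basis of each block and the explicit coend presentation $[\theta|v]_X$, so that the formula in the statement is recovered verbatim; all structural facts about $H$ actually used here (finite-dimensionality, split cosemisimplicity, block-diagonality of the coproduct) are supplied by the Tannaka--Kre\v\i n reconstruction recalled in Section~\ref{sect_prelim}, and nothing particular to the weak (as opposed to genuine) Hopf setting enters the argument.
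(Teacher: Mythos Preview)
Your argument is correct. The paper states this proposition without proof, presumably because it is a routine consequence of split cosemisimplicity; your block-by-block reduction to matrix coalgebras is exactly the computation one would expect, and your verification that the $\zeta_j$ are orthogonal convolution idempotents summing to $\epsilon$ cleanly handles the invertibility claim.
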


The following theorem demonstrates that the canonical WHA reconstructed from a
spherical multi-fusion category $\sym{C}$ contains a very special
left-integral $\ell\in H$. This integral will feature in the construction of
the invariant below.

\begin{theorem}
\label{thm_integral}
Let $\sym{C}$ be a spherical multi-fusion category over $k$ and
$H=\coend(\sym{C},\omega)$ be the finite-dimensional and split cosemisimple
cospherical WHA reconstructed from $\sym{C}$ using the long canonical
functor~\eqref{eq_longfunctor}.
\begin{myenumerate}
\item
The following element $\ell\in H$ is a left-integral:
\begin{equation}
\label{eq_leftintegral}
\ell=\sum_{j\in I}\dim V_j\,\sum_m{[D^{-1}_{\hat V}\circ e^m_{(V_j)}\circ(D_{\hat V}\otimes\id_{V_j})|e^{(V_j)}_m]}_{V_j}.
\end{equation}
\item
The following linear form $c\colon H\to k$ is a two-sided cointegral:
\begin{equation}
\label{eq_cointegral}
c({[\theta|v]}_X) = \left\{
\begin{array}{ll}
\epsilon({[\theta|\rho^{-1}_{\hat V}]}_\one)\epsilon({[\rho_{\hat V}|v]}_\one) &
\quad\mbox{if}\quad X\cong\one,\\
0 &\quad\mbox{else},
\end{array}
\right.
\end{equation}
for all simple $X\in|\sym{C}|$.
\item
Both $\ell$ and $c$ are non-degenerate.
\item
The cointegral $c$ is $S$-invariant, and $\ell$ is a dual quantum
trace.
\item
The integral can be expressed as $\ell=\zeta(t_{\hat
V}^\prime)t_{\hat V}^\pprime=w(\chi_{\hat
V}^\prime)\zeta(\chi_{\hat V}^\pprime)\chi_{\hat V}^\ppprime$
where $\chi_{\hat V}\in H$ is the dual character~\eqref{eq_dualchar}
associated with the small progenerator $\hat V$ of~\eqref{eq_progenerator};
$t_{\hat V}=w(\chi_{\hat V}^\prime)\chi_{\hat V}^\pprime\in H$ is its dual
quantum character~\eqref{eq_dualqchar}; and $\zeta\colon H\to k$ is the
convolution invertible and dual central linear form given by
\begin{equation}
\label{eq_dualcentral}
\zeta({[\theta|v]}_X) = (\dim X)\,\epsilon({[\theta|v]}_X)
\end{equation}
for all simple $X\in|\sym{C}|$.
\item
The integral $\ell$ is $S$-compatible.
\end{myenumerate}
\end{theorem}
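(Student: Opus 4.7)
The plan is to systematically exploit the block decomposition of the finite-dimensional split cosemisimple WHA $H=\coend(\sym{C},\omega)$. Every element of $H$ is a linear combination of matrix coefficients $[\theta|v]_X$ indexed by simple $X\cong V_j$, and in this basis multiplication, comultiplication, antipode, counital maps and the copivotal form all admit explicit expressions in terms of $D_{\hat V}$ and the dual bases $(e^m_{(V_j)}),(e^{(V_j)}_m)$. I would first establish part~(5), identifying $\ell$ as a weighted sum of dual quantum characters, then deduce~(1) and the $S$-theoretic properties~(4),~(6) from this identification, and treat the cointegral assertions~(2),~(3) by a dual argument.

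For~(5), I would recognize that $\sum_m[e^m_{(V_j)}|e^{(V_j)}_m]_{V_j}$ equals the dual character $\chi_{V_j}$, so that the insertion of $D_{\hat V}^{-1}$ on the left and $D_{\hat V}\otimes\id_{V_j}$ on the right in~\eqref{eq_leftintegral} converts each summand into the dual quantum character $T_{V_j}=w(\chi_{V_j}^\prime)\chi_{V_j}^\pprime$. Since characters are additive in direct sums and $\hat V=\bigoplus_j V_j$, this yields $\ell=\sum_j(\dim V_j)T_{V_j}$. By Proposition~\ref{prop_dualcentral} the form $\zeta$ defined in~\eqref{eq_dualcentral} is convolution invertible and dual central, and a direct matrix-coefficient computation using the standard formula $\Delta(\chi_{V_j})=\sum_{m,n}[e^m_{(V_j)}|e^{(V_j)}_n]_{V_j}\otimes[e^n_{(V_j)}|e^{(V_j)}_m]_{V_j}$ gives $\zeta(\chi_{V_j}^\prime)\chi_{V_j}^\pprime=(\dim V_j)\chi_{V_j}$ in each block; combining with $t_{\hat V}=\sum_j T_{V_j}$ and the coassociative rewriting $\zeta(t_{\hat V}^\prime)t_{\hat V}^\pprime=w(\chi_{\hat V}^\prime)\zeta(\chi_{\hat V}^\pprime)\chi_{\hat V}^\ppprime$ yields both expressions stated in~(5).

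Given~(5), I would derive~(1) by computing, for any $x\in H$, the left-multiplication $x\chi_{\hat V}$ using that $\hat V$ is a small progenerator, obtaining $x\chi_{\hat V}=\epsilon_t(x)\chi_{\hat V}$ modulo the $w$-twist, and then propagating through the $w$- and $\zeta$-factors by dual centrality of $\zeta$. Parts~(2) and~(3) follow from a dual calculation: since $c$ is supported on the block $X\cong\one$, the cointegral axioms reduce to $c\circ\mu=\epsilon$ on $H_t\otimes H_s$, which follows directly from~\eqref{eq_cointegral} together with the Frobenius structure of $\omega$, while non-degeneracy of $\ell$ and $c$ follows from $\dim V_j\neq 0$ for all $j$ and from invertibility of $\rho_{\hat V}$. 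For~(4), $S$-invariance of $c$ is immediate from $\one^\ast\cong\one$ and the duality action on matrix coefficients, while $\ell$ being a dual quantum trace, $\Delta^\op(\ell)=(\id_H\otimes S^2)\circ\Delta(\ell)$, is verified blockwise on each $T_{V_j}$ using the cosphericality identity that $D_{\hat V}$ implements $S^2$ by dual conjugation. For~(6), writing $\ell=w(\chi_{\hat V}^\prime)\zeta(\chi_{\hat V}^\pprime)\chi_{\hat V}^\ppprime$, expanding $\Delta(\ell)$ by coassociativity, and applying $S^2(x)=w(x^\prime)x^\pprime\bar w(x^\ppprime)$ together with $S$-invariance of $\chi_{\hat V}$ (valid because duality permutes simples, so $\hat V^\ast\cong\hat V$) produces $\bar w(\ell^\prime)S(\ell^\pprime)=\bar w(\ell^\prime)\ell^\pprime$.

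The main obstacle is~(1): verifying $x\ell=\epsilon_t(x)\ell$ requires carefully matching the block weights $\dim V_j$ with the two insertions of $D_{\hat V}$, which forces one to track how the Frobenius multiplication in the coend interacts with the dual bases of $\omega V_j$ and with the sphericality transformation. Once this is established, everything else is either a straightforward dual argument or a bookkeeping consequence of part~(5).
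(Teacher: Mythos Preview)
Your strategy of establishing~(5) first and deducing the rest is sound, and for parts~(5), (6), and the dual-quantum-trace half of~(4) you are essentially doing what the paper does: the paper also computes $\chi_{\hat V}$ blockwise, applies $w$ and $\zeta$, and for~(6) uses $\hat V^\ast\cong\hat V$ together with $\zeta\circ S=\zeta$. Your route to the dual-quantum-trace property of $\ell$ (each $T_{V_j}$ is already a dual quantum trace, hence so is any linear combination) is in fact more direct than the paper's, which instead deduces it from $S$-invariance of $c$ via the integral theory of B\"ohm--Nill--Szlach\'anyi.

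The substantive divergence is in~(2)--(4), where the paper leans on structural results from~\cite{BoNi99} rather than blockwise computation. For~(2) the paper identifies the space of right-cointegrals with $\Hom_{\sym{M}^H}(H,H_s)$ and then checks which are two-sided; for~(3) it computes the single identity $c\rightharpoonup\ell=1$ and invokes~\cite[Theorem~3.18]{BoNi99} to conclude that both $\ell$ and $c$ are non-degenerate; for the $S$-invariance of $c$ in~(4) it uses that any two-sided non-degenerate cointegral is automatically $S$-invariant. Your proposal replaces these by direct arguments, which is legitimate, but your treatment of~(3) is a genuine gap: non-degeneracy of $\ell$ means that the bilinear form $(\phi,\psi)\mapsto\phi(\ell^\prime)\psi(\ell^\pprime)$ on $H^\ast$ is non-degenerate, and this is a statement about the convolution algebra $H^\ast$, not merely about the coalgebra blocks. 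Asserting that it ``follows from $\dim V_j\neq 0$ for all $j$'' is not an argument; you would at minimum need to exhibit, for each nonzero $\phi\in H^\ast$, some $\psi$ with $(\phi\ast\psi)(\ell)\neq 0$, and this requires understanding how convolution in $H^\ast$ interacts with the block decomposition---which is exactly where the multiplication of $H$ (not just its coalgebra structure) enters. The paper's one-line computation $c\rightharpoonup\ell=1$ is the clean way to do this, and it simultaneously gives non-degeneracy of $c$; you should either reproduce that computation or supply a genuine replacement.
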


\begin{proof}
\begin{myenumerate}
\item
Note that the canonical left-integral $\ell_{\mathrm{can}}\in H$
that exists in every WHA, turns out to be a multiple of our $\ell$:
\begin{equation}
\ell_{\mathrm{can}} = \sum_jb_j^\prime\beta_j(S^2(b_j^\pprime))=|I|\,\ell.
\end{equation}
Here we have written $\sum_jb_j\otimes\beta_j$ for the canonical
element in $H\otimes H^\ast$. Therefore, $\ell_{\mathrm{can}}=0$
whenever the characteristic of $k$ divides the number $|I|$ of
isomorphism classes of the simple objects of $\sym{C}$. Our integral
$\ell\in H$ avoids this problem and never vanishes as we show in Part~(3)
below. The proof that it indeed forms a left-integral is by a direct
calculation and is most transparent if one uses the isomorphism
$H\cong\End({\hat V}^\ast\otimes \hat V)$ of~\cite[Section~4.1]{Pf09b}.
\item
By the result dual to~\cite[Lemma 3.3]{BoNi99}, the set of
right-cointegrals of $H$ is isomorphic as a left-$H$-comodule to
$\Hom_{\sym{M}^H}(H,H_s)$ where both $H$ and $H_s\cong\one$ are viewed as
right-$H$-comodules. Since $H$ is split cosemisimple, the set
$\Hom_{\sym{M}^H}(H,H_s)$ and thereby the set of right-cointegrals is
known explicitly. A direct computation shows that such a right-cointegral is
two-sided if and only if it is a scalar multiple of $c$ of~\eqref{eq_cointegral}.
\item
A direct computation shows that
$c\rightharpoonup\ell=1$. By~\cite[Theorem~3.18]{BoNi99}, both
$\ell$ and $c$ are therefore non-degenerate. In particular, $\ell\neq
0$. The theorem also implies that $\ell\rightharpoonup c=1^\ast$, where
$1^\ast\in H^\ast$ is the unit of the dual WHA, a result that is needed in
Part~(4) below.
\item
Since there exists the two-sided non-degenerate cointegral $c$ of
$H$, by the result dual to~\cite[Lemma~3.21]{BoNi99}, all two-sided
cointegrals are $S$-invariant. In particular, $c$ is. Since $c$ is a
non-degenerate left-cointegral and $\ell\rightharpoonup c=1^\ast$,
$\ell$ is its dual left-integral, and we can apply the result dual
to~\cite[Theorem~3.20]{BoNi99}. Since $c$ is $S$-invariant, this
theorem implies that $\ell$ is a dual quantum trace.
\item
Since the coalgebra underlying $H$ is finite-dimensional and split
cosemisimple, we know the coefficients of the right-$H$ comodule $\hat
V\in|\sym{M}^H|$ and can compute its dual character~\eqref{eq_dualchar}:
\begin{equation}
\chi_{\hat V}
= \sum_{j\in I}\chi_{V_j}
= \sum_{j\in I}\sum_m{[e^m_{(V_j)}|e_m^{(V_j)}]}_{V_j}\in H.
\end{equation}
A direct computation using the copivotal form~\eqref{eq_copivotal} and
equation~\eqref{eq_dualcentral}, proves the claim.
\item
Since ${\hat V}^\ast\cong\hat V$, its dual character is
$S$-invariant, \ie\ $S(\chi_{\hat V})=\chi_{\hat V}$. In order to prove the
claim, we use that $\zeta\circ S=\zeta$.
\end{myenumerate}
\end{proof}

\section{Evaluation of (unoriented) framed links}
\label{sect_ribbon}

\subsection{Ribbon diagrams}
\label{sect_evaluation}

Let $\sym{C}$ be a ribbon category (see
Appendix~\ref{app_coribbon}). Every morphism of $\sym{C}$ can be
represented by a composition of tensor products of the following
string diagrams,
\begin{align}
\id_X             &= \tikzsymb{\identity{X}}\qquad\qquad&
\id_{X^\ast}      &= \tikzsymb{\iddual{X}}\nn\\
\ev_X             &= \tikzsymb{\leftev{X}}\qquad\qquad&
\coev_X           &= \tikzsymb{\leftcoev{X}}\nn\\
\bar\ev_X         &= \tikzsymb{\rightev{X}}\qquad\qquad&
\bar\coev_X       &= \tikzsymb{\rightcoev{X}}\nn
\end{align}
\begin{align}
\nu_X             &= \tikzsymb{\twistpos{X}}\qquad\qquad&
\nu^{-1}_X        &= \tikzsymb{\twistneg{X}}\nn\\
\nu_{X^\ast}      &= \tikzsymb{\twistposdual{X}}\qquad\qquad&
\nu^{-1}_{X^\ast} &= \tikzsymb{\twistnegdual{X}}\nn\\
\sigma_{X,Y}      &= \tikzsymb{\braiddd{X}{Y}}\qquad\qquad&
\sigma^{-1}_{X,Y} &= \tikzsymb{\braidinvdd{Y}{X}},\nn
\end{align}
in which the components of the ribbon are labeled by objects
$X,Y,\ldots\in|\sym{C}|$. The monoidal unit $\one\in|\sym{C}|$ is invisible in
these diagrams. If an object label $X\in|\sym{C}|$ is replaced by its dual
$X^\ast$, the arrow is reversed. Note that we read composition from top to
bottom and the tensor product from left to right. This agrees with about half
of the literature, but notably differs from Turaev~\cite{Tu10} who reads
composition from the bottom up and who calls the right-handed rather than the
left-handed twist $\nu_X$. Our choice of diagrams turns out to be convenient
in the present context as we study corepresentations rather than
representations of (Weak) Hopf Algebras.

By a result of Reshetikhin--Turaev~\cite{ReTu90} which can be viewed as a
coherence theorem for ribbon categories, every plane projection of an oriented
framed tangle in $S^3$ can be arranged to agree with such a diagram, but
without labels. If one now labels the components of the tangle with objects of
$\sym{C}$, the resulting morphism of $\sym{C}$ can be shown to be independent
of the chosen projection~\cite{ReTu90}. Therefore, a given ribbon category
$\sym{C}$ associates morphisms of $\sym{C}$ with labeled oriented framed
tangles and, more specially, endomorphisms of the monoidal unit
$\one\in|\sym{C}|$ with labeled oriented framed links.

\subsection{Reshetikhin--Turaev evaluation for ribbon categories}

Let $\sym{C}$ be a ribbon category, $V\in|\sym{C}|$ be an object of $\sym{C}$
such that $V^\ast\cong V$, and $\zeta\colon 1_{\sym{C}}\Rightarrow
1_{\sym{C}}$ be a natural isomorphism of the identity functor such that
$\zeta^\ast=\zeta$. Given a plane projection of an (unoriented) framed link
$L$ in $S^3$, we label all components by $V$ and insert $\zeta_V$ somewhere
(anywhere) into each component of the link. We call the resulting morphism
\begin{equation}
\label{eq_rtevaluation}
{\left<L\right>}^{(\sym{C})}_{V,\zeta}\colon\one\to\one,
\end{equation}
the \emph{Reshetikhin--Turaev evaluation} of the link $L$. Notice
that~\eqref{eq_rtevaluation} is independent of the orientation of each
component of $L$ and therefore well defined. The link
evaluation~\eqref{eq_rtevaluation} for $V=\hat V$ of~\eqref{eq_progenerator}
and $\zeta$ the natural transformation associated with the linear form $\zeta$
of~\eqref{eq_dualcentral} is the one that features in the Reshetikhin--Turaev
invariant~\cite{ReTu91}.

\subsection{Evaluation for coribbon Weak Hopf Algebras}

In this section, we evaluate an (unoriented) framed link in the ribbon
category $\sym{M}^H$ for a suitable coribbon WHA $H$.

\begin{remark}[Summary of results from~\cite{Pf09a}]
\label{rem_coribbon}
Let $H$ be a coribbon WHA over some field $k$. Then the category $\sym{M}^H$
of finite-dimensional right $H$-comodules is a ribbon category with
\begin{alignat}{3}
\ev_V            &\colon V^\ast\otimes V\to H_s,  &&\quad \theta\otimes v\to\theta(v_0)\epsilon_s(v_1),\\
\coev_V          &\colon H_s\to V\otimes V^\ast,  &&\quad x\mapsto ({(e_j)}_0\otimes e^j)\epsilon(x{(e_j)}_1),\\
\nu_V            &\colon V\to V,                  &&\quad v\mapsto v_0\nu(v_1),\\
\nu^{-1}_V       &\colon V\to V,                  &&\quad v\mapsto v_0\bar\nu(v_1),\\
\sigma_{V,W}     &\colon V\otimes W\to W\otimes V,&&\quad v\otimes w\mapsto (w_0\otimes v_0)r(w_1\otimes v_1),\\
\sigma^{-1}_{V,W}&\colon W\otimes V\to V\otimes W,&&\quad w\otimes v\mapsto (v_0\otimes w_0)\bar r(w_1\otimes v_1),\\
\end{alignat}
where we have used Sweedler notation for comodules (see
Appendix~\ref{app_wba}), $e_j\otimes e^j\in V\otimes V^\ast$ denotes the
canonical element, and $H_s$ plays the role of the monoidal unit of
$\sym{M}^H$.

The right evaluation and coevaluation $\bar\ev_V$ and $\bar\coev_V$
can be computed as in~\eqref{eq_barevribbon}
and~\eqref{eq_barcoevribbon}, respectively. Replacing one object
$V\in|\sym{M}^H|$ by its dual is is done as in~\eqref{eq_dualaction}.
\end{remark}

In the remainder of this section, we show how the Reshetikhin--Turaev
evaluation ${\left<L\right>}^{(\sym{M}^H)}_{V,\zeta}$ can be computed using
only the WHA $H$. In this computation, the dual quantum character $T_V$
appears once for each component of $L$, in conjunction with the linear form
associated with $\zeta$. The element
\begin{equation}
\ell=\zeta(T_V^\prime)T_V^\pprime\in H
\end{equation}
turns out to be an $S$-invariant dual quantum trace.

\begin{proposition}
\label{prop_universalnat}
Let $H$ be a WBA and $U^H\colon\sym{M}^H\to\Vect_k$ be the usual forgetful
functor. Every natural transformation $f\colon 1_{\sym{M}^H}\Rightarrow
1_{\sym{M}^H}$ is of the form
\begin{equation}
\label{eq_universalnat}
f_V(v) = v_0\alpha^{(f)}(v_1)
\end{equation}
for all $V\in|\sym{M}^H|$ and $v\in V$. Here $\alpha^{(f)}\colon H\to k$ is a
uniquely determined dual central linear form. In addition, $f\colon
1_{\sym{M}^H}\Rightarrow 1_{\sym{M}^H}$ is a natural equivalence if and only
if $\alpha^{(f)}$ is convolution invertible.
\end{proposition}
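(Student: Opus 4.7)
The plan is to build a bijection between natural transformations $f\colon 1_{\sym{M}^H}\Rightarrow 1_{\sym{M}^H}$ and dual central linear forms $\alpha\in H^\ast$, matching natural equivalences with convolution invertible $\alpha$. For the easy direction, given $\alpha$ dual central I would define $f_V(v):=v_0\alpha(v_1)$; a one-line Sweedler computation combining coassociativity with the dual central identity $\alpha(x_1)x_2=x_1\alpha(x_2)$ shows that each $f_V$ intertwines the coaction and so is a morphism in $\sym{M}^H$, and naturality in $V$ is automatic because any comodule morphism intertwines the coactions.

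For the reverse direction, I would recover $\alpha^{(f)}$ from $f$ evaluated on $H$ regarded as a right $H$-comodule via $\Delta$ (replacing $H$ by an arbitrary finite-dimensional subcoalgebra if necessary and gluing the pieces by naturality). Since $f_H\colon H\to H$ is a comodule map, setting $\alpha^{(f)}:=\epsilon\circ f_H$ and applying $\epsilon\otimes\id$ to the identity $\Delta\circ f_H=(f_H\otimes\id)\circ\Delta$ gives the reconstruction $f_H(x)=\alpha^{(f)}(x_1)x_2$. To transfer this to a general $V\in|\sym{M}^H|$, I would use the $H$-comodule $V\otimes_kH$ with the free coaction $\id_V\otimes\Delta$; this is an object of $\sym{M}^H$ as a plain comodule, regardless of whether it agrees with the monoidal product of $V$ and $H$ inside $\sym{M}^H$. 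For each $v\in V$, the inclusion $\iota_v\colon H\to V\otimes_kH$, $h\mapsto v\otimes h$, is a comodule map for the free structure, so naturality of $f$ forces $f_{V\otimes_kH}(v\otimes h)=v\otimes f_H(h)$; and $\rho_V\colon V\to V\otimes_kH$ is itself a comodule map by coassociativity. Naturality of $f$ applied to $\rho_V$ then yields $\rho_V(f_V(v))=v_0\otimes\alpha^{(f)}(v_1)v_2$, and applying $\id_V\otimes\epsilon$ together with the counit axiom on comodules collapses this to $v_0\alpha^{(f)}(v_1)$, as required by~\eqref{eq_universalnat}. Specialising the formula to $V=H$ and comparing with the reconstruction $f_H(x)=\alpha^{(f)}(x_1)x_2$ forces $\alpha^{(f)}(x_1)x_2=x_1\alpha^{(f)}(x_2)$, i.e.\ dual centrality; uniqueness is immediate by applying $\epsilon$.

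For the final claim, if $\alpha:=\alpha^{(f)}$ admits a convolution inverse $\beta\in H^\ast$, I would set $g_V(v):=v_0\beta(v_1)$ as a bare $k$-linear endomorphism and verify $f_V\circ g_V=g_V\circ f_V=\id_V$ by short Sweedler calculations using $\alpha\ast\beta=\beta\ast\alpha=\epsilon$ and the counit axiom. Each $f_V$ is then bijective, and since $f_V$ is already a comodule map, its set-theoretic inverse $g_V$ is automatically a comodule map, so $f$ is a natural equivalence. Conversely, if $f$ is a natural equivalence then $f^{-1}$ corresponds to some $\beta\in H^\ast$, and $f\circ f^{-1}=\id$ read on $V=H$ (after applying $\epsilon$) yields $\alpha\ast\beta=\epsilon$.

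The one genuinely WBA-specific subtlety is that the monoidal product of $V$ and $H$ inside $\sym{M}^H$ need not coincide with $V\otimes_kH$ (the forgetful functor $U^H$ being not strong monoidal in general), which is why I invoke $V\otimes_kH$ with the free coaction only as a plain object of $\sym{M}^H$, never as a tensor product in the category. Every other step uses only coassociativity, the counit axiom on comodules, and the definition of convolution in $H^\ast$, all of which are valid in any WBA.
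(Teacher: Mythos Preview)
Your argument is correct, but it takes a different route from the paper. The paper's proof is a one-liner invoking the universal property of the coend $H\cong\coend(\sym{M}^H,U^H)$: linear forms $H\to k$ are in natural bijection with natural transformations $U^H\Rightarrow U^H$ via exactly the formula~\eqref{eq_universalnat}, and the extra requirement that each $f_V$ be a morphism of $\sym{M}^H$ (not merely of $\Vect_k$) is what cuts this down to dual central forms. You instead build the correspondence by hand, using the regular comodule $H$ (or its finite-dimensional subcoalgebras) to extract $\alpha^{(f)}=\epsilon\circ f_H$, and the cofree comodule $V\otimes_k H$ to propagate the formula to arbitrary $V$. Your approach is more elementary---it avoids any appeal to coend machinery and makes the role of coassociativity and the counit axiom explicit---while the paper's approach is more conceptual and much shorter, since all of your Sweedler manipulations are packaged into the single statement that $H$ corepresents natural endomorphisms of $U^H$. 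Your care about the distinction between $V\otimes_k H$ and the monoidal tensor in $\sym{M}^H$, and about passing to finite-dimensional subcoalgebras when $H$ is not finite-dimensional, is well placed; the paper's coend formulation silently handles both issues.
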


\begin{proof}
By the universal property of the universal coend
$H\cong\coend(\sym{M}^H,U^H)$, condition~\eqref{eq_universalnat} defines a unique linear
form $\alpha^{(f)}$ for the natural transformation $\alpha$. Since each
$f_V\colon V\to V$ is a morphism, $\alpha^{(f)}$ is dual central. The
convolution inverse $\overline{\alpha^{(f)}}$ is given by $f^{-1}_V(v) =
v_0\overline{\alpha^{(f)}}(v_1)$ if it exists, again using the universal
property of the coend.
\end{proof}

\begin{proposition}
Let $H$ be a WBA and $U^H\colon\sym{M}^H\to\Vect_k$ be the usual forgetful
functor. Every natural transformation $f\colon-\otimes-\Rightarrow-\otimes-$
of the functor
$-\otimes-\colon\sym{M}^H\times\sym{M}^H\to\sym{M}^H\times\sym{M}^H$ is of the
form
\begin{equation}
\label{eq_universalnat2}
f_{V,W}(v\otimes w) = v_0\otimes w_0\,\alpha^{(f)}(v_1\otimes w_1)
\end{equation}
for all $V,W\in|\sym{M}^H|$ and $v\in V$, $w\in W$. Here $\alpha^{(f)}\colon
H\otimes H\to k$ is a uniquely determined linear form that satisfies
\begin{gather}
\label{eq_universalnata}
\epsilon(x^\prime y^\prime)\alpha(x^\pprime\otimes y^\pprime)
= \alpha(x\otimes y)
= \alpha(x^\prime\otimes y^\prime)\epsilon(x^\pprime  y^\pprime),\\
\label{eq_universalnatb}
x^\prime y^\prime\,\alpha(x^\pprime\otimes y^\pprime)
= \alpha(x^\prime\otimes y^\prime)\,x^\pprime y^\pprime,
\end{gather}
for all $x,y\in H$.
\end{proposition}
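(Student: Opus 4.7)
The plan is to extend the argument of Proposition~\ref{prop_universalnat} to the bifunctor case, with two additional subtleties coming from the weak structure. Since $H\cong\coend(\sym{M}^H,U^H)$, the tensor product $H\otimes H$ represents pairs of comodule structures, so any natural family of $k$-linear endomorphisms of $U^HV\otimes_k U^HW$ in $(V,W)$ corresponds uniquely to a linear form on $H\otimes H$. This yields existence and uniqueness of a linear form $\alpha^{(f)}$ satisfying~\eqref{eq_universalnat2} at the level of the ambient spaces $V\otimes_k W$, exactly as in the single-variable case.

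Next I would exploit the fact that the monoidal product $V\otimes W$ in $\sym{M}^H$ is the image of a canonical idempotent $\pi_{V,W}\colon V\otimes_k W\to V\otimes_k W$ built from the coactions and from $\epsilon_t$. Well-definedness of $f_{V,W}$ on $V\otimes W$ amounts to $f_{V,W}\circ\pi_{V,W}=f_{V,W}$, while the requirement that $f_{V,W}$ take values in $V\otimes W$ amounts to $\pi_{V,W}\circ f_{V,W}=f_{V,W}$. Translating these two conditions through the canonical element and unpacking the formula for $\pi_{V,W}$ yields precisely the two equalities in~\eqref{eq_universalnata}.

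For~\eqref{eq_universalnatb}, I would apply the naturality of $f$ with respect to all $H$-comodule morphisms $g\colon V\to V'$ and $h\colon W\to W'$. By the universal property of the coend $H\otimes H$, it suffices to verify the naturality square on the universal morphisms that implement left multiplication by elements of $H\otimes H$ on the coefficient spaces; this is exactly the content of~\eqref{eq_universalnatb}. The converse (that any $\alpha$ satisfying the three conditions yields a natural transformation of $-\otimes-$) follows by reading the same derivation in reverse: \eqref{eq_universalnata} guarantees that the formula~\eqref{eq_universalnat2} descends to a well-defined endomorphism of $V\otimes W$, and \eqref{eq_universalnatb} then guarantees naturality.

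The main obstacle is the bookkeeping of the idempotent $\pi_{V,W}$: unlike in an ordinary bialgebra, the monoidal product in $\sym{M}^H$ is a proper subspace of the tensor product over $k$, and this is exactly what forces the source/target counital equations~\eqref{eq_universalnata}, which would be vacuous in the bialgebra case. The condition~\eqref{eq_universalnatb}, by contrast, is a direct generalization of the dual centrality of Proposition~\ref{prop_universalnat} to the bifunctor setting and should follow from naturality in each factor separately by a direct computation using Sweedler notation.
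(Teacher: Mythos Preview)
Your treatment of~\eqref{eq_universalnata} via the idempotent $\pi_{V,W}(v\otimes w)=v_0\otimes w_0\,\epsilon(v_1w_1)$ is correct and makes explicit what the paper's proof only asserts: once $f_{V,W}$ is extended to a natural endomorphism of $U^HV\otimes_k U^HW$ by pre- and post-composing with $\pi_{V,W}$, the coend property produces $\alpha^{(f)}$, and the identities $\pi\circ\tilde f=\tilde f=\tilde f\circ\pi$ unwind to the two halves of~\eqref{eq_universalnata}.

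Your derivation of~\eqref{eq_universalnatb}, however, has a genuine gap. Once the formula~\eqref{eq_universalnat2} is in hand, the naturality squares with respect to comodule morphisms $g\colon V\to V'$ and $h\colon W\to W'$ are \emph{automatic}: since $g$ and $h$ are $H$-colinear, $g(v)_0\otimes g(v)_1=g(v_0)\otimes v_1$, and a one-line computation gives $(g\otimes h)\circ f_{V,W}=f_{V',W'}\circ(g\otimes h)$ with no further condition on $\alpha$. In particular, there are no ``universal morphisms implementing left multiplication by elements of $H\otimes H$'' available to test against, because left multiplication by a generic element of $H$ is not a right-$H$-comodule map.

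What actually forces~\eqref{eq_universalnatb} is the requirement that each component $f_{V,W}$ be a \emph{morphism in $\sym{M}^H$}, \ie\ that it intertwine the coaction $\beta_{V\otimes W}(v\otimes w)=(v_0\otimes w_0)\otimes v_1w_1$. Writing out $H$-colinearity of~\eqref{eq_universalnat2} and taking $V=W=H$ with the regular coaction yields exactly $x^\prime y^\prime\,\alpha(x^\pprime\otimes y^\pprime)=\alpha(x^\prime\otimes y^\prime)\,x^\pprime y^\pprime$. This is the same mechanism as in Proposition~\ref{prop_universalnat}, where dual centrality of $\alpha^{(f)}$ comes from each $f_V$ being a comodule map, not from the naturality squares --- a point you correctly recall in your last paragraph but do not carry over to the argument itself.
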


\begin{proof}
By the universal property of the universal coend $H\otimes
H\cong\coend(\sym{M}^H\times\sym{M}^H,U^H\otimes U^H)$,
\eqref{eq_universalnat2} defines a unique linear form $\alpha^{(f)}$ for the
natural transformation $\alpha$, and~\eqref{eq_universalnata} is
satisfied. Since each $f_{V,W}\colon V\otimes W\to V\otimes W$ is a morphism,
condition~\eqref{eq_universalnatb} holds.
\end{proof}

In the following, we use the same symbol for the natural transformation and
for the associated linear form, for example, $\alpha\colon
1_{\sym{M}^H}\Rightarrow 1_{\sym{M}^H}$ and $\alpha\colon H\to k$ or
$\beta\colon-\otimes-\Rightarrow-\otimes-$ and $\beta\colon H\otimes H\to k$.

\begin{lemma}
\label{la_partialtrace}
Let $H$ be a coribbon WHA and $f\colon X\otimes V\to X\otimes V$ be a morphism
in $\sym{M}^H$. Then
\begin{equation}
\tikzsymb{\partialtrace{X}{V}{f}}(x)
= \sum_{i,j} g_i(x)\otimes e^j({(h_i(e_j)}_0)w({(h_i(e_j))}_1)
\end{equation}
for all $x\in X$ where we have written $f=\sum_i g_i\otimes h_i$ with
$g_i\in\End(X)$ and $h_i\in\End(V)$, $\coev_V=\sum_j e_j\otimes e^j$, and
where $w\colon H\to k$ denotes the copivotal form. The diagram is in the
ribbon category $\sym{M}^H$ and is drawn in blackboard framing.
\end{lemma}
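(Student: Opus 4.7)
The plan is to unfold the diagram as a standard composition of ribbon-category structure maps, substitute the explicit coribbon formulas of Remark~\ref{rem_coribbon}, and simplify.

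First I would read the diagram as the morphism $(\id_X\otimes\bar\ev_V)\circ(f\otimes\id_{V^\ast})\circ(\id_X\otimes\coev_V)\colon X\to X$ (after identifying $X\otimes H_s\cong X$): the $V$-strand is closed on the right, so the closure is implemented by the right evaluation $\bar\ev_V$ paired with the ordinary left coevaluation $\coev_V$, and this is precisely where the copivotal form $w$ enters the computation, via the explicit expression for $\bar\ev_V$ in~\eqref{eq_barevribbon}.

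Next I would substitute the explicit formulas from Remark~\ref{rem_coribbon}: $\coev_V(1_{H_s})=\sum_j e_j\otimes e^j$ and $\bar\ev_V(v\otimes\theta)=\theta(v_0)\,w(v_1)\cdot 1_{H_s}$ (up to the appropriate $\epsilon_s$ factor). Writing $f=\sum_i g_i\otimes h_i$ as in the statement and applying the three maps in turn to $x\in X$, one obtains the claimed expression directly. The decomposition of $f$ is used only to apply $f$ as a linear map to $x\otimes e_j$ (producing $\sum_i g_i(x)\otimes h_i(e_j)$), and the comodule coaction is applied solely to the single element $h_i(e_j)\in V$; in particular, no property of $h_i$ beyond linearity is needed, and no coassociativity shuffling is required.

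The only substantive point — and thus the only place requiring any real care — is the identification of the right-hand closure of the $V$-loop in blackboard framing with the right evaluation $\bar\ev_V$ of $\sym{M}^H$. This is exactly the content of the pivotal structure on $\sym{M}^H$ coming from the copivotal form $w$, as summarized in Appendix~\ref{app_coribbon}; once it is in place, the remainder reduces to a one-line substitution, with no Sweedler manipulation beyond evaluating the coaction on $h_i(e_j)$.
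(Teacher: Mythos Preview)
Your approach is the same as the paper's: unfold the diagram as $(\id_X\otimes\bar\ev_V)\circ(f\otimes\id_{V^\ast})\circ(\id_X\otimes\coev_V)$ composed with the unit constraints, substitute the formulas from Remark~\ref{rem_coribbon}, and simplify. The overall structure is correct.

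However, your claim that ``no Sweedler manipulation beyond evaluating the coaction on $h_i(e_j)$'' is needed is too optimistic, and this is exactly where the paper's proof is more careful than your sketch. Because $\sym{M}^H$ uses the \emph{truncated} tensor product and the monoidal unit is $H_s$ rather than $k$, the unit constraint $\rho_X^{-1}\colon x\mapsto x_0\otimes\epsilon_s(x_1)$ and the coevaluation $\coev_V(h)=\sum_j({(e_j)}_0\otimes e^j)\epsilon(h{(e_j)}_1)$ each introduce $\epsilon$-factors that must be collapsed before you can read off the claimed formula. The paper isolates precisely the two identities that do this:
\[
(x_0\otimes v_0)\,\epsilon(\epsilon_s(x_1)v_1)=x\otimes v,
\qquad
(x_0\otimes\theta_0)\,\epsilon(x_1\epsilon_s(\theta_1))=x\otimes\theta,
\]
for $x\in X$, $v\in V$, $\theta\in V^\ast$. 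The first cleans up the passage $X\to X\otimes H_s\to X\otimes V\otimes V^\ast$ so that one really lands on $\sum_j(x\otimes e_j)\otimes e^j$; the second handles the analogous collapse on the $\bar\ev_V$ side together with $\rho_X$. Your parenthetical ``up to the appropriate $\epsilon_s$ factor'' is a placeholder for exactly these two computations, and they are the only substantive content of the proof. Once you write them down, the rest is indeed the one-line substitution you describe.
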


\begin{proof}
Use the definitions in Remark~\ref{rem_coribbon} as well as
\begin{equation}
(x_0\otimes v_0)\epsilon(\epsilon_s(x_1)v_1) = x\otimes v
\end{equation}
and
\begin{equation}
(x_0\otimes \theta_0)\epsilon(x_1\epsilon_s(\theta_1)) = x\otimes\theta
\end{equation}
for all $x\in X$, $v\in V$, and $\theta\in V^\ast$.
\end{proof}

\begin{proposition}
\label{prop_tracelemma}
Let $H$ be a coribbon WHA and $\alpha\colon-\otimes-\Rightarrow-\otimes-$ be a
natural transformation. Then for all $X,V\in|\sym{M}^H|$,
\begin{equation}
\label{eq_tracelemma}
\tikzsymb{\partialtrace{X}{V}{\alpha_{X,V}}}(x)
= x_0\,\alpha(x_1\otimes T_V)
\end{equation}
for all $x\in X$. The diagram is in $\sym{M}^H$ and drawn in blackboard
framing, and $T_V\in H$ denotes the dual quantum character
of~\eqref{eq_dualqchar}.
\end{proposition}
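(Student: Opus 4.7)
The plan is to apply Lemma~\ref{la_partialtrace} with $f=\alpha_{X,V}$ and then repackage the resulting expression in terms of the dual character $\chi_V$ and, finally, the dual quantum character $T_V=w(\chi_V^\prime)\chi_V^\pprime$. By the preceding proposition, the natural transformation $\alpha$ acts elementwise as $\alpha_{X,V}(x\otimes v)=(x_0\otimes v_0)\,\alpha(x_1\otimes v_1)$, reusing the symbol $\alpha$ both for the natural transformation and the associated linear form on $H\otimes H$. First, one should note that Lemma~\ref{la_partialtrace}, although stated for a decomposable $f=\sum_i g_i\otimes h_i$, extends by $k$-linearity to any endomorphism of $X\otimes V$. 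Applied to $f=\alpha_{X,V}$, its formula yields
\[
\tikzsymb{\partialtrace{X}{V}{\alpha_{X,V}}}(x)
=\sum_j x_0\,e^j\bigl(((e_j)_0)_0\bigr)\,w\bigl(((e_j)_0)_1\bigr)\,\alpha\bigl(x_1\otimes(e_j)_1\bigr).
\]

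Next, I would invoke coassociativity of the right $H$-coaction on $V$, $(\rho_V\otimes\id_H)\circ\rho_V=(\id_V\otimes\Delta)\circ\rho_V$, to rewrite $((e_j)_0)_0\otimes((e_j)_0)_1\otimes(e_j)_1=(e_j)_0\otimes(e_j)_1^\prime\otimes(e_j)_1^\pprime$ with $\Delta((e_j)_1)=(e_j)_1^\prime\otimes(e_j)_1^\pprime$. This converts the above sum into
\[
\sum_j x_0\,e^j((e_j)_0)\,w\bigl((e_j)_1^\prime\bigr)\,\alpha\bigl(x_1\otimes(e_j)_1^\pprime\bigr).
\]

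Then I would recognize $\sum_j e^j((e_j)_0)(e_j)_1$ as precisely the dual character $\chi_V\in H$ of~\eqref{eq_dualchar}, so that applying $\Delta$ gives $\sum_j e^j((e_j)_0)(e_j)_1^\prime\otimes(e_j)_1^\pprime=\chi_V^\prime\otimes\chi_V^\pprime$. The expression collapses to $x_0\,w(\chi_V^\prime)\,\alpha(x_1\otimes\chi_V^\pprime)$, and pulling the scalar $w(\chi_V^\prime)$ through the bilinear $\alpha$ produces $x_0\,\alpha\bigl(x_1\otimes w(\chi_V^\prime)\chi_V^\pprime\bigr)=x_0\,\alpha(x_1\otimes T_V)$, by definition of the dual quantum character~\eqref{eq_dualqchar}.

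The main technical obstacle is the Sweedler bookkeeping in the coassociativity step, where the iterated coaction on $e_j$ must be unwound and rematched so that $\chi_V$ and its coproduct appear cleanly; once done, the remaining steps are merely identifications of standard elements. I would also pause to justify, in one line, that the two displayed scalars can be freely reordered past $\alpha$ and past the factor $x_0$, which is just commutativity of scalar multiplication combined with bilinearity of $\alpha\colon H\otimes H\to k$.
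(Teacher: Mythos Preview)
Your proof is correct and follows exactly the approach the paper takes: the paper's proof consists of the single sentence ``Apply Lemma~\ref{la_partialtrace} to $f_{X,V}=\alpha_{X,V}$,'' and you have simply unpacked this application in full detail, using coassociativity of the $H$-coaction to assemble the dual character and then the identity $T_V=w(\chi_V^\prime)\chi_V^{\prime\prime}$. Your remark that Lemma~\ref{la_partialtrace} extends by linearity to arbitrary $f$ is the one point the paper leaves implicit; it is justified because the formula $\alpha_{X,V}(x\otimes v)=(x_0\otimes v_0)\,\alpha(x_1\otimes v_1)$ makes sense on the full tensor product $X\otimes_k V$ and decomposes there via any basis of $H$.
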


\begin{proof}
Apply Lemma~\ref{la_partialtrace} to $f_{X,V}=\alpha_{X,V}$.
\end{proof}

\begin{lemma}
\label{la_tracelemma}
Let $H$ be a coribbon WHA and $\alpha\colon 1_{\sym{M}^H}\Rightarrow
1_{\sym{M}^H}$ be a natural transformation. Then for $V\in|\sym{M}^H|$,
\begin{equation}
\label{eq_trace}
\tikzsymb{\trace{V}{\alpha_V}}(h) = \alpha(T_V^\prime)\epsilon_s(h\epsilon_t(T_V^\pprime))
\end{equation}
for all $h\in H_s$.
\end{lemma}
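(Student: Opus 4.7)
The plan is to recognise the closed trace diagram as a special case of the partial trace already computed in Lemma~\ref{la_partialtrace} (equivalently Proposition~\ref{prop_tracelemma}), obtained by taking the outer strand to be the monoidal unit $X=\one=H_s$ and choosing $f=\id_{H_s}\otimes\alpha_V\colon H_s\otimes V\to H_s\otimes V$. Under the left-unit isomorphism $H_s\otimes V\cong V$ in $\sym{M}^H$, this morphism represents the same endomorphism as $\alpha_V$, so that the partial-trace diagram collapses to $\tikzsymb{\trace{V}{\alpha_V}}$.

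Substituting $\alpha_V(v)=v_0\alpha(v_1)$ from Proposition~\ref{prop_universalnat} into the right-hand side of Lemma~\ref{la_partialtrace}, I would expand the iterated coaction by coassociativity, namely
\[
(\alpha_V(e_j))_0\otimes(\alpha_V(e_j))_1=(e_j)_0\otimes (e_j)_{10}\,\alpha((e_j)_{11}).
\]
The scalar factor then reduces to $\sum_j e^j((e_j)_0)\,(w\ast\alpha)((e_j)_1)$, where $\ast$ denotes convolution in $H^\ast$. By the defining property of the dual character, $\phi(\chi_V)=\sum_j e^j((e_j)_0)\phi((e_j)_1)$ for all $\phi\in H^\ast$, and using $T_V=w(\chi_V^\prime)\chi_V^\pprime$, this scalar simplifies to $w(\chi_V^\prime)\alpha(\chi_V^\pprime)=\alpha(T_V)$, yielding the intermediate identity $\tikzsymb{\trace{V}{\alpha_V}}(h)=h\,\alpha(T_V)$ in $H_s$.

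The last step is to reformulate $h\,\alpha(T_V)$ as $\alpha(T_V^\prime)\,\epsilon_s(h\,\epsilon_t(T_V^\pprime))$. This should follow from the WHA identities of left $H_s$-linearity of $\epsilon_s$, the composition identity $\epsilon_s\circ\epsilon_t=\eta\circ\epsilon$, and the counit axiom $T_V^\prime\epsilon(T_V^\pprime)=T_V$ applied under $\alpha$. I expect this last reformulation to be the main obstacle, together with correctly identifying the right-coaction on $H_s$ in its role as monoidal unit of $\sym{M}^H$: this coaction is non-trivial in a genuine WHA and involves the Frobenius structure on the forgetful functor, so the various tensor-product identifications $H_s\otimes V\cong V$ are not merely the obvious vector-space isomorphisms. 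The more elaborate expression in the statement is preferred over the simpler $h\,\alpha(T_V)$ because the maps $\epsilon_s$ and $\epsilon_t$ will feature prominently in the construction of the invariant in Section~\ref{sect_invariant}.
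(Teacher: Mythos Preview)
Your overall strategy---specialising the partial-trace formula of Lemma~\ref{la_partialtrace} to $X=\one=H_s$---is different from the paper's proof, which computes the trace diagram directly from the formulas in Remark~\ref{rem_coribbon} and obtains the intermediate expression
\[
\alpha(T_V^\prime)\,\epsilon(hT_V^\pprime)\,\epsilon_s\bigl(S(T_V^\ppprime)\bigr),
\]
then simplifies this to the stated form. The paper never passes through the scalar $\alpha(T_V)$.

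There is a genuine gap in your last step. You propose to rewrite $h\,\alpha(T_V)$ as $\alpha(T_V^\prime)\,\epsilon_s\bigl(h\,\epsilon_t(T_V^\pprime)\bigr)$ using, among other things, the identity $\epsilon_s\circ\epsilon_t=\eta\circ\epsilon$. But this identity is \emph{false} in a genuine WHA: it is precisely one of the equivalent characterisations of ordinary Hopf algebras among WHAs (it forces $H_s\cong k$). For instance, evaluating both sides at $1$ gives $\epsilon_s(\epsilon_t(1))=\epsilon_s(1)=1$ versus $\eta(\epsilon(1))=\epsilon(1)\cdot 1$, and in a non-trivial WHA one has $\epsilon(1)=\dim_k H_s\neq 1$. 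Likewise ``left $H_s$-linearity of $\epsilon_s$'' in the form $\epsilon_s(hx)=h\,\epsilon_s(x)$ is not a general WHA identity.

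More conceptually, the trace $\tr_V(\alpha_V)\colon\one\to\one$ lives in $\End_{\sym{M}^H}(H_s)$, which for a non-copure WHA is strictly larger than $k$; so there is no reason to expect the trace to act by scalar multiplication $h\mapsto h\,\alpha(T_V)$. Any attempt to bridge from your scalar expression to the stated formula will therefore have to use genuinely weak features of $H$ (the specific coaction on $H_s$, or special properties of the dual quantum character $T_V$), not identities that collapse to the Hopf case. The paper's direct computation sidesteps this entirely by never producing the scalar $\alpha(T_V)$ as an intermediate.
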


\begin{proof}
Recall that the trace~\eqref{eq_trace} is a linear map $H_s\to H_s$ where
$H_s\cong\one\in|\sym{M}^H|$ is the monoidal unit. Using the definitions in
Remark~\ref{rem_coribbon}, we obtain
\begin{equation}
\eqref{eq_trace} = \alpha(T_V^\prime)\epsilon(hT_V^\pprime)\epsilon_s(S(T_V^\ppprime))
\end{equation}
which can be shown to agree with the right hand side of~\eqref{eq_trace}.
\end{proof}

\begin{theorem}
\label{thm_evaluationform}
Let $H$ be a coribbon WHA, $L$ be an (unoriented) framed link in $S^3$ with
$m$ components, $V\in|\sym{M}^H|$ and $\zeta\colon 1_{\sym{M}^H}\Rightarrow
1_{\sym{M}^H}$ be a natural equivalence such that $\zeta^\ast=\zeta$. Then the
Reshetikhin--Turaev evaluation is of the form
\begin{equation}
\label{eq_computeeval}
{\left<L\right>}^{(\sym{M}^H)}_{V,\zeta}(h) = \phi^{(L)}(h\otimes\underbrace{\ell\otimes\cdots\otimes\ell}_{m})
\end{equation}
for all $h\in H_s$ with a linear map $\phi^{(L)}\colon H_s\otimes H^{\otimes m}\to H_s$.
Here, the element
\begin{equation}
\ell = \zeta(T_V^\prime) T_V^\pprime\in H
\end{equation}
is an $S$-compatible dual quantum trace.
\end{theorem}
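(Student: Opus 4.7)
My plan is to prove this in two stages. First, I will reduce the Reshetikhin--Turaev link evaluation to an iterated partial trace in $\sym{M}^H$ and apply Proposition~\ref{prop_tracelemma} component-by-component, each application producing exactly one factor of $\ell=\zeta(T_V^\prime)T_V^\pprime$ and leaving behind a $V$- and $\zeta$-independent combinatorial datum $\phi^{(L)}$. Second, I will verify that $\ell$ is an $S$-compatible dual quantum trace by a direct Sweedler-type calculation, exploiting the dual-quantum-trace property of $T_V$, the dual centrality and convolution invertibility of $\zeta$, the hypothesis $V^\ast\cong V$, and $\zeta^\ast=\zeta$.

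For the first stage, I fix a plane projection of $L$ using the elementary diagrams of Section~\ref{sect_evaluation}, label all components by $V$, and insert $\zeta_V$ once on each component; thanks to $V^\ast\cong V$ and $\zeta^\ast=\zeta$, the resulting labeled ribbon diagram represents an unambiguous morphism $\one\to\one$ in $\sym{M}^H$, i.e.\ a linear endomorphism of $H_s$. I then cut one component at the chosen insertion point, presenting $L$ as the ribbon closure along that strand of a tangle whose remainder is a natural endomorphism of $V\otimes X$ with $X=V^{\otimes(m-1)}$ absorbing the other components. Applying Proposition~\ref{prop_tracelemma} to this situation (after pulling $\zeta_V\otimes\id_X$ into the natural transformation) produces $x_0\,\alpha(x_1\otimes\zeta(T_V^\prime)T_V^\pprime)=x_0\,\alpha(x_1\otimes\ell)$ and leaves a natural endomorphism of $X$. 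Iterating $m-1$ further times, and closing the last component via Lemma~\ref{la_tracelemma}, yields~\eqref{eq_computeeval} with $\phi^{(L)}\colon H_s\otimes H^{\otimes m}\to H_s$ encoding only the crossing, twisting, and framing combinatorics of $L$.

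For the second stage, $T_V$ is a dual quantum trace by the discussion in Section~\ref{sect_reconstructed}, so $\Delta^\op(T_V)=(\id_H\otimes S^2)\Delta(T_V)$. Dual centrality of $\zeta$ (Proposition~\ref{prop_universalnat}) lets me pull $\zeta$ past the tensor factors appearing when computing $\Delta(\ell)$, and a short Sweedler calculation then gives $\Delta^\op(\ell)=(\id_H\otimes S^2)\Delta(\ell)$, so $\ell$ is a dual quantum trace. For $S$-compatibility, the hypothesis $V^\ast\cong V$ yields $\chi_{V^\ast}=\chi_V$, hence $S(\chi_V)=\chi_V$, so by the equivalence recorded just after~\eqref{eq_scompatible}, $T_V$ is already $S$-compatible. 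Expanding $\bar w(\ell^\prime)S(\ell^\pprime)$ in Sweedler notation, using the centrality of $\zeta$, the identity $\zeta\circ S=\zeta$ (which encodes $\zeta^\ast=\zeta$), and the $S$-compatibility of $T_V$, then transforms this expression into $\bar w(\ell^\prime)\ell^\pprime$, as required.

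The main obstacle will be the first stage: setting up the iterated partial trace carefully enough that each of the $m$ components contributes exactly one factor of $\ell$ and the remaining combinatorial data $\phi^{(L)}$ manifestly depends only on the link, not on $V$ or $\zeta$. Once that reduction is in place, the two algebraic properties of $\ell$ reduce to routine manipulations in Sweedler notation from the properties collected in Section~\ref{sect_reconstructed}.
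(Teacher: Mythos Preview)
Your approach is essentially the same as the paper's: iterate Proposition~\ref{prop_tracelemma} over the components, finishing with Lemma~\ref{la_tracelemma}, and derive the algebraic properties of $\ell$ from those of $T_V$ and $\chi_V$ together with the dual centrality of $\zeta$ and $\zeta\circ S=\zeta$. Two points deserve tightening.

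First, the iteration step is not quite self-contained as you state it. After closing one component you obtain $x\mapsto x_0\,\omega(x_1)$ with $\omega(y)=\alpha(y\otimes\ell)$; to close the \emph{next} component via Proposition~\ref{prop_tracelemma} you need this residual, composed with the remaining ribbon diagram, to again be a natural transformation of the bifunctor $-\otimes-$. The paper handles this by proving a slightly stronger statement allowing an arbitrary extra natural transformation $\xi^{(j)}_V$ on each component, and by explicitly checking that $f_{Y\otimes V}(y\otimes v)=y_0\otimes v_0\,\omega(y_1v_1)$ is natural in each tensor factor separately. You should make this explicit; otherwise ``iterating $m-1$ further times'' is a promissory note rather than an argument.

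Second, $\phi^{(L)}$ is not determined by the link alone: it depends on the chosen diagram and on the order in which you close the components. What is well-defined is the value $\phi^{(L)}(h\otimes\ell^{\otimes m})$, because that equals the Reshetikhin--Turaev evaluation. The paper notes this explicitly; your phrase ``manifestly depends only on the link'' overstates the conclusion.
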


\begin{proof}
We prove a slightly stronger claim in which we insert into each component
$L_j$, $1\leq j\leq m$, of $L$ a natural transformation $\gamma^{(j)}_V\colon
V\to V$ that is of the form $\gamma_V^{(j)}=\xi^{(j)}_V\circ\zeta_V$ with
arbitrary $\xi_V^{(j)}\colon V\to V$. The proof proceeds by induction on the
number of components $m$.

If $m=1$, the link evaluation is of the form
\begin{equation}
{\left<L\right>}^{(\sym{M}^H)}_{V,\zeta} = \tikzsymb{\inductionstart{V}{\gamma_V^{(1)}}{\nu_V^z}}
\end{equation}
where $\nu_V^z$, $z\in\Z$, is the appropriate power of the twist. Putting
$\alpha_V=\nu_V^z\circ\gamma^{(1)}_V=\nu_V^z\circ\xi^{(1)}_V\circ\zeta_V$
in Lemma~\ref{la_tracelemma} shows that
\begin{equation}
{\left<L\right>}^{(\sym{M}^H)}_{V,\zeta}(h)
= \zeta(T_V^\prime)\psi^{(1)}(T_V^\pprime)\epsilon_s(h\epsilon_t(T_V^\ppprime))
= \psi^{(1)}(\ell^\prime)\epsilon_s(h\epsilon_t(\ell^\pprime))
\end{equation}
for all $h\in H_s$ where the linear form $\psi^{(1)}\colon H\to k$ implements
the natural transformation $\psi^{(1)}_V=\nu_V^z\circ\xi^{(1)}_V$. This proves
our stronger proposition. For $\xi^{(1)}_V=\id_V$, we obtain the claim of the
theorem for $m=1$ as a special case.

We now assume that our stronger assumption holds for some $m\in\N$ and consider
a link $L$ with $m+1$ components. If we select one of the components, labeled
$V$ and, without loss of generality, numbered $m+1$, the diagram of $L$ can be
arranged in such a way that the selected component $V$ appears in the
following fashion:
\begin{equation}
f_{V^{\otimes m}} = \tikzsymb{\inductionstep{V^{\otimes m}}{V}{\gamma^{(m+1)}_V}{\nu_V^z}{\beta_{V^{\otimes m},V}}},
\end{equation}
with some natural transformation $\beta\colon-\otimes-\Rightarrow-\otimes-$
and some $z\in\Z$. We now apply Proposition~\ref{prop_tracelemma} with
$\alpha_{V^{\otimes m},V}=(\id_{V^{\otimes
m}}\otimes(\nu_V^z\circ\gamma^{(m+1)}_V))\circ\beta_{V^{\otimes m},V}$ and
$X=V^{\otimes m}$. This proposition shows that
\begin{equation}
f_{V^{\otimes m}}(x) = x_0\,\omega(x_1),
\end{equation}
for all $x\in V^{\otimes m}$ where
\begin{equation}
\label{eq_omega}
\omega(y)=\zeta(T_V^\prime)\psi^{(m+1)}(T_V^\pprime)\alpha(y\otimes T_V^\ppprime)
=\psi^{(m+1)}(\ell^\prime)\alpha(y\otimes\ell^\pprime)
\end{equation}
for all $y\in H$. Here, $\psi^{(m+1)}\colon H\to k$ is the linear form that
implements the natural transformation
$\psi^{(m+1)}_V=\nu_V^z\circ\xi^{(m+1)}$. In particular, the map
$f_{V^{\otimes m}}$ is natural in $V^{\otimes m}$. Even stronger, we can split
$X=V^{\otimes m}=Y\otimes V$ with $Y=V^{\otimes(m-1)}$ and see that
$f_{Y\otimes V}(y\otimes v)=y_0\otimes v_0\,\omega(y_1v_1)$ for all $y\in Y$
and $v\in V$ which shows that $f_{Y\otimes V}$ is natural both in $Y$ and in
$V$. By induction, we see that $f_{V^{\otimes m}}$ is natural in each component
$V$ of the tensor power $V^{\otimes m}$.

The evaluation ${\left<L\right>}^{(\sym{M}^H)}_{V,\zeta}$ for the
$(m+1)$-component link is therefore equal to the evaluation for an
$m$-component link in which we have inserted different natural transformations
into every component: the composition of $f_{V^{\otimes m}}$ which is natural
in each tensor factor $V$ with the $\zeta_V$ from the original claim. By the
assumption of our induction, ${\left<L\right>}^{(\sym{M}^H)}_{V,\zeta}$ is of
the form of our stronger claim. Note that for each component of $L$,
\eqref{eq_omega} is applied once and yields one tensor factor $\ell$.

Depending on the order in which we consider the components of $L$, we may
obtain different linear maps $\phi^{(L)}$. The right hand side
of~\eqref{eq_computeeval}, however, always agrees with the Reshetikhin--Turaev
evaluation.
\end{proof}

\subsection{Generalized dual Hennings--Kauffman--Radford evaluation}

The idea of Hennings~\cite{He96} in the case of Hopf algebras (not weak) is
that the $S$-compatible dual quantum trace $\ell\in H$ that features in
Theorem~\ref{thm_evaluationform} can be replaced by an arbitrary
$S$-compatible dual quantum trace and still yields a well-defined link
evaluation, \ie\ an evaluation that gives the same value irrespective of the
diagram that is used in order to represent the link. The same holds for
WHAs. We summarize this result in the following.

\begin{definition}
Let $H$ be a coribbon WHA, $\ell\in H$ be an $S$-compatible dual quantum
trace, and $L$ be an $m$-component (unoriented) framed link in $S^3$. The
\emph{generalized Hennings--Kauffman--Radford evaluation}
${\left<L\right>}^{(H)}_\ell$ is defined as follows. Consider a diagram of
$L$. Label each component of $L$ with a formal symbol $\sym{X}$ which stands
for a finite-dimensional right $H$-comodule with coaction
$\beta_\sym{X}\colon\sym{X}\to\sym{X}\otimes H$. We impose only the relations
that $\sym{X}$ be a rigid object of $\Vect_k$ and that $\beta_\sym{X}$ be a
right-$H$ comodule, \ie\
\begin{eqnarray}
\alpha_{\sym{X},H,H}\circ(\beta_\sym{X}\otimes\id_H)\circ\beta_\sym{X}
&=& (\id_\sym{X}\otimes\Delta)\circ\beta_\sym{X},\\
\rho_\sym{X}\circ(\id_\sym{X}\otimes\epsilon)\circ\beta_\sym{X}
&=& \id_\sym{X}.
\end{eqnarray}
Theorem~\ref{thm_evaluationform} applies, and so
\begin{equation}
{\left<L\right>}^{(\sym{M}^H)}_{\sym{X},\id}(h)
=\phi^{(L)}(h\otimes\underbrace{\tilde\ell\otimes\cdots\otimes\tilde\ell}_{m})
\end{equation}
for all $h\in H_s$ with a linear map $\phi^{(L)}\colon H_s\otimes H^{\otimes
m}\to H_s$. The theorem computes the $S$-compatible dual quantum trace
as $\tilde\ell=T_\sym{X}\in H$, the dual quantum character of the formal comodule
$\sym{X}$. The Hennings--Kauffman--Radford evaluation is then defined by
replacing this element $\tilde\ell$ with the given $S$-compatible dual
quantum trace $\ell\in H$:
\begin{equation}
\label{eq_karaevaluation}
{\left<L\right>}^{(H)}_\ell(h) = \phi^{(L)}(h\otimes\underbrace{\ell\otimes\cdots\otimes\ell}_{m}).
\end{equation}
for all $h\in H_s$.
\end{definition}

\begin{theorem}
\label{thm_kara}
Let $H$ be a coribbon WHA, $\ell\in H$ be an $S$-compatible dual quantum
trace, and $L$ be an $m$-component (unoriented) framed link in $S^3$. The
Hennings--Kauffman--Radford evaluation ${\left<L\right>}^{(H)}_\ell$ is well
defined, \ie\ it is independent of the diagram used to represent $L$.
\end{theorem}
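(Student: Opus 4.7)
The plan is to check that the expression $\phi^{(L)}(h \otimes \ell^{\otimes m})$ agrees for any two diagrams of the same framed unoriented link $L$, by verifying invariance under a generating set of local moves and isolating which moves actually engage the element $\ell$ rather than only the coribbon structure of $H$. Every pair of diagrams of a framed unoriented link in $S^3$ is connected by a finite sequence of moves of four types: (i) planar isotopy, (ii) the framed Reidemeister moves RII and RIII, (iii) relocation of the insertion site of $\ell$ on some component, and (iv) reversal of the orientation of some component.

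For moves of type (i) and (ii), performed in regions of the diagram disjoint from the insertion sites, I expect the identity $\phi^{(D)}(h \otimes x_1 \otimes \cdots \otimes x_m) = \phi^{(D')}(h \otimes x_1 \otimes \cdots \otimes x_m)$ to hold for every choice of $x_1, \ldots, x_m \in H$: this is just the assertion that the underlying coribbon structure on $H$ satisfies the coquasi-triangularity axioms and the Yang--Baxter equation. These moves therefore impose no constraint on $\ell$ at all. Theorem~\ref{thm_evaluationform} can be invoked as a sanity check, since it already establishes diagram-independence of $\phi^{(L)}(h \otimes T_\sym{X}^{\otimes m})$ for the specific choice $\ell = T_\sym{X}$ coming from an actual comodule, and the moves of types (i) and (ii) are precisely those handled by Theorem~\ref{thm_evaluationform}'s appeal to Reshetikhin--Turaev coherence~\cite{ReTu90}.

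The genuine content is in the moves of types (iii) and (iv). Sliding the insertion site of $\ell$ along a component across a crossing or around a cap/cup will require the identity $\Delta^\op(\ell) = (\id_H \otimes S^2) \circ \Delta(\ell)$, which is exactly the dual quantum trace axiom. Reversing the orientation of a component turns the formal comodule $\sym{X}$ on that component into its dual $\sym{X}^\ast$, which algebraically replaces the inserted $\ell$ by a twisted form involving $S(\ell)$ and the copivotal form $w$; matching the two expressions then forces the $S$-compatibility condition $\bar w(\ell') S(\ell'') = \bar w(\ell') \ell''$ of~\eqref{eq_scompatible}.

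The main obstacle I anticipate is the correct bookkeeping of source and target counital maps in the WHA setting. In the classical Hopf algebra case treated by Kauffman--Radford~\cite{KaRa95} one has $H_s = H_t = k$ and the local identities above reduce to their familiar form; for a coribbon WHA, however, every cap or cup injects copies of $\epsilon_s$ and $\epsilon_t$ that must be tracked through the argument, in particular in the way $h \in H_s$ is threaded through the identities and in the way orientation reversal interacts with the distinction between $H_s$ and $H_t$. Once these factors are verified to cancel correctly using the axioms recalled in Appendix~\ref{app_wha}, the local identities will reduce to the Hopf algebra versions and invariance under each generating move will follow.
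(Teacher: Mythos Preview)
Your proposal is correct and follows essentially the same approach as the paper. The paper's proof is little more than a citation: it declares the argument to be dual to that of Hennings~\cite{He96} and Kauffman--Radford~\cite{KaRa95}, notes that passing from left modules to right comodules amounts to rotating all diagrams by $180^\circ$, and observes that $S$-compatibility of $\ell$ is equivalent to $S$-invariance of the associated dual trace $\chi=\bar w(\ell^\prime)\ell^\pprime$, which yields orientation-independence. Your outline spells out the content of that citation, correctly isolating the dual quantum trace axiom as what permits sliding the insertion site and $S$-compatibility as what permits orientation reversal. The one place where the paper is slightly more efficient is your anticipated obstacle: rather than tracking $\epsilon_s$ and $\epsilon_t$ by hand, the paper invokes the coherence theorem for the monoidal category $\sym{M}^H$ to say that the monoidal unit $H_s\cong\one$ may be inserted at any position in the tensor products, which absorbs the WHA-specific bookkeeping in one stroke.
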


\begin{proof}
The proof is dual of the proof of Hennings~\cite{He96} or the proof of
Kauffman--Radford~\cite{KaRa95}. The idea is that
in~\eqref{eq_karaevaluation}, the linear map $\phi^{(L)}\colon H_s\otimes H^{\otimes
m}\to H_s$ may initially depend on the diagram that represents $L$ and on the
order of the components of $L$ used in the proof of
Theorem~\ref{thm_evaluationform}, but the linear map
\begin{equation}
\phi^{(L)}(-\otimes\underbrace{\ell\otimes\cdots\otimes\ell}_{m})\colon
H_s\to H_s
\end{equation}
is independent of the diagram that represents the link
$L$. Kauffman--Radford~\cite{KaRa95} show that the condition that $\ell\in H$
be an $S$-compatible dual quantum trace is sufficient in order to establish
independence. Since they work with left modules whereas we use
right-comodules, we can simply rotate all their diagrams by $180^\circ$ in
order to prove our claim. The coherence theorem for the monoidal category
$\sym{M}^H$ ensures that the monoidal unit $H_s\cong\one$ can be inserted at
an arbitrary position in all tensor products.

Finally, note that since $\ell$ is $S$-compatible, the dual trace $\chi=\bar
w(\ell^\prime)\ell^\pprime$ is $S$-invariant, and so the evaluation is
independent of the orientation of each individual component of $L$.
\end{proof}

The following corollary to Theorem~\ref{thm_evaluationform} establishes the
relation between the two link evaluations.

\begin{corollary}
Let $\sym{C}$ be a multi-fusion category over $k$ which has a ribbon
structure, and let $H=\coend(\sym{C},\omega)$ be the finite-dimensional and
split cosemisimple coribbon WHA reconstructed from $\sym{C}$ using the long
canonical functor~\eqref{eq_longfunctor}. Then the Reshetikhin--Turaev and the
Hennings--Kauffman--Radford evaluations agree for every (unoriented) framed
link $L$ in $S^3$:
\begin{equation}
{\left<L\right>}^{(\sym{C})}_{\hat V,\zeta}={\left<L\right>}^{(H)}_\ell
\end{equation}
with $\hat V$ of~\eqref{eq_progenerator}, $\zeta$ of~\eqref{eq_dualcentral}
and $\ell$ of~\eqref{eq_leftintegral}.
\end{corollary}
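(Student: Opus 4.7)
The plan is to chain together the results already established, with the Tannaka--Kre\v\i n equivalence $\sym{C}\simeq\sym{M}^H$ of ribbon categories (Appendix~\ref{app_reconstruction}) as the bridge. Since the Reshetikhin--Turaev evaluation~\eqref{eq_rtevaluation} depends only on the ribbon structure, the equivalence identifies ${\left<L\right>}^{(\sym{C})}_{\hat V,\zeta}$ with ${\left<L\right>}^{(\sym{M}^H)}_{\hat V,\zeta}$; thus it suffices to compare link evaluations inside $\sym{M}^H$.

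Next I would verify that the hypotheses of Theorem~\ref{thm_evaluationform} are satisfied with $V=\hat V$ and $\zeta$ the natural transformation on $\sym{M}^H$ associated with the dual central linear form~\eqref{eq_dualcentral}. The required self-duality $\hat V^\ast\cong\hat V$ holds because the set $I$ of isomorphism classes of simples is closed under duals in any multi-fusion category, so we may choose representatives with $V_j^\ast\cong V_{j^\ast}$ for an involution $j\mapsto j^\ast$ of $I$. The condition $\zeta^\ast=\zeta$ follows because $\zeta$ acts as the scalar $\dim V_j$ on $V_j$ and $\dim V_j=\dim V_j^\ast$ in a spherical (hence ribbon) category, so the two families of scalars match under $I\leftrightarrow I$. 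Then Theorem~\ref{thm_evaluationform} yields
\[
{\left<L\right>}^{(\sym{M}^H)}_{\hat V,\zeta}(h)
=\phi^{(L)}(h\otimes\underbrace{\tilde\ell\otimes\cdots\otimes\tilde\ell}_{m}),
\qquad \tilde\ell=\zeta(T_{\hat V}^\prime)\,T_{\hat V}^\pprime,
\]
with $\tilde\ell$ an $S$-compatible dual quantum trace.

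The final identification is exactly the content of Theorem~\ref{thm_integral}(5)--(6): the element $\tilde\ell=\zeta(T_{\hat V}^\prime)T_{\hat V}^\pprime$ coincides with the left-integral $\ell$ of~\eqref{eq_leftintegral}, and this $\ell$ is $S$-compatible. Substituting $\ell$ for $\tilde\ell$ in the displayed formula and comparing with the definition~\eqref{eq_karaevaluation} of the Hennings--Kauffman--Radford evaluation gives
\[
{\left<L\right>}^{(\sym{M}^H)}_{\hat V,\zeta}(h)
=\phi^{(L)}(h\otimes\ell\otimes\cdots\otimes\ell)
={\left<L\right>}^{(H)}_\ell(h)
\]
for all $h\in H_s$, as desired.

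The only delicate point in this plan is that $\phi^{(L)}$ is not unique; it depends on the diagram and on the ordering of components chosen when applying the induction in Theorem~\ref{thm_evaluationform}. This is not an obstruction here because Theorem~\ref{thm_kara} guarantees that once the $S$-compatible dual quantum trace $\ell$ is plugged into every slot, the resulting value of~\eqref{eq_karaevaluation} is independent of all such choices; hence the identity above holds unambiguously. No further computation beyond invoking these three theorems is required.
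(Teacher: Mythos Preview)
Your proposal is correct and follows essentially the same route as the paper, which simply cites Theorem~\ref{thm_evaluationform} and Theorem~\ref{thm_integral}(5). You have merely spelled out the steps the paper leaves implicit: the passage from $\sym{C}$ to $\sym{M}^H$ via the ribbon equivalence, the verification of the hypotheses $\hat V^\ast\cong\hat V$ and $\zeta^\ast=\zeta$, and the observation that the possible non-uniqueness of $\phi^{(L)}$ is harmless by Theorem~\ref{thm_kara}. The invocation of Theorem~\ref{thm_integral}(6) is redundant since Theorem~\ref{thm_evaluationform} already asserts $S$-compatibility of $\tilde\ell$, but this does no harm.
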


\begin{proof}
Theorem~\ref{thm_evaluationform} and Theorem~\ref{thm_integral}(5).
\end{proof}

\section{Invariants of $3$-manifolds}
\label{sect_invariant}

\subsection{The invariant for a Weak Hopf Algebra}

We proceed in analogy to the work of Hennings~\cite{He96} and
Kauffman--Radford~\cite{KaRa95} and show that if the $S$-compatible dual
quantum trace $\ell$ is a left-integral, then the link evaluation can be made
invariant under Kirby moves.

\begin{theorem}
\label{thm_invariant}
Let $H$ be a coribbon WHA over some field $k$ and $\ell\in H$ be a
left-integral which is an $S$-compatible dual quantum trace. Let $L$
be an (unoriented) framed link in $S^3$ with components
$L_1,\ldots,L_m$, $m\in\N$. If there exist $\beta,\gamma\in
k\backslash\{0\}$ such that the following two conditions,
\begin{eqnarray}
\label{eq_kirbyplus}
\nu(x^\prime)\bar\nu(\epsilon_t(x^\pprime)\ell) &=& \frac{\gamma^2}{\beta}\,\nu(x),\\
\label{eq_kirbyminus}
\bar\nu(x^\prime)\nu(\epsilon_t(x^\pprime)\ell) &=& \beta\,\bar\nu(x),
\end{eqnarray}
hold for all $x\in H$, then
\begin{equation}
\label{eq_invariant}
I(M_L) = \beta^\sigma\,\gamma^{-\sigma-m-1}\,{\left<L\right>}_\ell^{(H)}\in\End(\one)
\end{equation}
forms an invariant of connected and oriented closed (smooth)
$3$-manifolds. Here, $\one\cong H_s$ denotes the monoidal unit of $\sym{M}^H$.

In~\eqref{eq_invariant}, $M_L$ is the $3$-manifold obtained from $S^3$ by
surgery along $L$; ${\left<L\right>}_\ell^{(H)}$ denotes the generalized
Hennings--Kauffman--Radford evaluation of the link $L$ using the dual quantum
trace $\ell$; and $\sigma$ is the signature of the linking matrix of $L$ with
framing numbers on its diagonal.
\end{theorem}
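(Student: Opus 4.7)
I would base the proof on the Lickorish--Wallace--Kirby classification: every connected oriented closed 3-manifold is $M_L$ for some framed link $L\subset S^3$, and two framed links give diffeomorphic $M_L, M_{L'}$ iff they are related by a finite sequence of (K1)~stabilization by a disjoint unknot of framing $\pm 1$ and (K2)~handle slides. So the plan is to show that $I(M_L)$ as defined in~\eqref{eq_invariant} is unchanged by (K1) and (K2), after which the theorem follows.

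\emph{Invariance under handle slide.} Both the signature $\sigma$ and the component number $m$ are preserved by (K2), so I only need to show that $\langle L\rangle^{(H)}_\ell$ itself is invariant. This would proceed by the diagrammatic argument of Hennings~\cite{He96} and Kauffman--Radford~\cite{KaRa95}, dualized from left modules to right comodules (\emph{i.e.}\ by rotating their figures through $180^\circ$, as in the proof of Theorem~\ref{thm_kara}). The key algebraic fact is the defining identity $x\ell=\epsilon_t(x)\ell$ of a left-integral: by Theorem~\ref{thm_evaluationform} and Proposition~\ref{prop_tracelemma}, the effect of sliding a strand over a component labeled $\ell$ is governed by the left action of $H$ on $\ell$, and the left-integral property collapses that action onto the base algebra $H_s$. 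The $S$-compatibility of $\ell$ (built into the definition of the generalized Hennings--Kauffman--Radford evaluation via Theorem~\ref{thm_kara}) already ensures that $\langle L\rangle^{(H)}_\ell$ is independent of the chosen diagram and of the orientation of each component, which is a prerequisite before the handle-slide manipulation even makes sense.

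\emph{Invariance under stabilization.} When a disjoint $\pm 1$-framed unknot is added to $L$, the linking matrix acquires an extra diagonal block $\pm 1$, so $\sigma\mapsto\sigma\pm 1$ and $m\mapsto m+1$. A direct count shows that the normalization prefactor $\beta^\sigma\gamma^{-\sigma-m-1}$ then changes by the respective factors $\beta\gamma^{-2}$ (for $+1$) and $\beta^{-1}$ (for $-1$). It therefore suffices to show that $\langle L\rangle^{(H)}_\ell$ changes by the reciprocal factors $\gamma^2/\beta$ and $\beta$. This is exactly the content of the hypotheses~\eqref{eq_kirbyplus} and~\eqref{eq_kirbyminus}: specializing them to $x\in H_s\cong\one$ computes the Reshetikhin--Turaev-style evaluation of an isolated $\pm 1$-framed unknot labeled by $\ell$ via the trace formula of Lemma~\ref{la_tracelemma}. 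The full strength of~\eqref{eq_kirbyplus}--\eqref{eq_kirbyminus} with arbitrary $x\in H$ lets one, equivalently, slide a $\pm 1$-framed encircling unknot off any strand; this is the Fenn--Rourke form of (K1) and also subsumes (K2) up to isotopy, giving an alternative route to handle-slide invariance.

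\emph{Main obstacle.} The principal technical difficulty is executing the handle-slide calculation cleanly in the coribbon WHA setting. Relative to the Hopf-algebra proofs of~\cite{He96,KaRa95}, one must carry out the induction on components (as in the proof of Theorem~\ref{thm_evaluationform}) while correctly inserting the source and target counital maps $\epsilon_s,\epsilon_t$ and treating the monoidal unit as $H_s$ rather than $k$. The appearance of $\epsilon_t(x'')$ in~\eqref{eq_kirbyplus}--\eqref{eq_kirbyminus} is the tell-tale sign that precisely these factors are the ones to track: where a Hopf-algebra computation would produce a bare counit $\epsilon(x)$, the WHA computation produces $\epsilon_t(x)$ landing in the non-trivial base algebra $H_s$. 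Once this bookkeeping is handled correctly, the remaining verifications are straightforward manipulations using the left-integral property, the $S$-compatibility of $\ell$, and the axioms of a coribbon WHA summarized in Remark~\ref{rem_coribbon}.
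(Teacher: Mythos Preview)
Your proposal is correct, and what you call the ``alternative route'' via Fenn--Rourke is in fact the paper's \emph{only} route. The paper does not verify handle-slide invariance (your (K2)) separately at all: it cites the Fenn--Rourke theorem and checks only the two Kirby-$(\pm 1)$-moves, i.e.\ removing a $\pm 1$-framed unknot that encircles $n\geq 0$ strands and replacing it by a $\mp 1$ full twist on those strands. For each sign the verification is a five-line chain: applying Proposition~\ref{prop_tracelemma} to the encircling component produces a linear form $f(x)=\bar\nu(\ell^\prime)\bar q(x\otimes\ell^{\prime\prime})$ on the strand variable $x$; inserting $\nu(x')\bar\nu(x'')$ and using the consequence $\bar\nu(xy)=\bar\nu(x')\bar\nu(y')\bar q(x''\otimes y'')$ of the ribbon axiom~\eqref{eq_ribbontensor} collapses this to $\nu(x')\bar\nu(x''\ell)$; the left-integral property $x''\ell=\epsilon_t(x'')\ell$ gives $\nu(x')\bar\nu(\epsilon_t(x'')\ell)$; and hypothesis~\eqref{eq_kirbyplus} finishes. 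The $(-1)$-case is identical with $\nu\leftrightarrow\bar\nu$, $q\leftrightarrow\bar q$.

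The upshot is that your ``principal technical difficulty'' simply does not arise in the paper's proof. There is no induction over components for the Kirby moves and no separate handle-slide diagrammatics to port from the Hopf to the WHA setting; the only place the weak structure shows is the single appearance of $\epsilon_t$ from the left-integral identity, which is precisely why the hypotheses are stated with $\epsilon_t(x'')$ rather than $\epsilon(x)$. Your (K1)+(K2) decomposition would also work, but it is longer and uses the hypotheses only in their specialization to $x\in H_s$, wasting the information encoded for general $x$.
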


\begin{proof}
By the theorems of Wallace and Lickorish~\cite{Wa60,Li62}, of
Kirby~\cite{Ki78} and of Fenn--Rourke~\cite{FeRo79}, $M_L$ is diffeomorphic as
an oriented manifold to $M_{\tilde L}$ if and only if the (unoriented) framed
link $L$ can be transformed into $\tilde L$ using a finite sequence of
Kirby-$(+1)$- and Kirby-$(-1)$-moves.

Note that $I(M_L)$ is independent of the numbering of the components of the
link, and so $I(M_L)$ is well-defined for each given Kirby diagram $L$ of some
connected and oriented closed $3$-manifold.

We first show that~\eqref{eq_kirbyplus} and~\eqref{eq_kirbyminus}
imply that ${\left<L\right>}_\ell^{(H)}$ is invariant up to the specified scalar
factors $\gamma^2/\beta$ and $\beta$ under Kirby-$(+1)$-moves and under
Kirby-$(-1)$-moves, respectively. For the Kirby-$(+1)$-move, we show that
\begin{equation}
\biggl<\tikzsymb{\kirbyplus{\sym{X}^{\otimes n}}{\sym{X}}}\biggr>_\ell^{(H)}
= \frac{\gamma^2}{\beta}\biggl<\tikzsymb{\lefttwist{\sym{X}^{\otimes n}}}\biggr>_\ell^{(H)},
\end{equation}
for all $n\in\N_0$. The left-hand side evaluates to
$(\id_\sym{X}\otimes f)\circ\beta_\sym{X}\colon\sym{X}\to\sym{X}$ with
\begin{eqnarray}
\label{eq_kirbypluscalc}
f(x)&=&\bar\nu(\ell^\prime)\bar q(x\otimes\ell^\pprime)\nn\\
&=&\nu(x^\prime)\bar\nu(x^\pprime)\bar\nu(\ell^\prime)\bar q(x^\ppprime\otimes\ell^\pprime)\nn\\
&=&\nu(x^\prime)\bar\nu(x^\pprime\ell)\nn\\
&=&\nu(x^\prime)\bar\nu(\epsilon_t(x^\pprime)\ell)\nn\\
&=&\frac{\gamma^2}{\beta}\,\nu(x),
\end{eqnarray}
for all $x\in H$ with $\bar q(-\otimes-)$ of~\eqref{eq_defbarq}. We have used
convolution invertibility of $\nu$, a consequence of
equation~\eqref{eq_ribbontensor}, that $\ell$ is a left-integral, and
equation~\eqref{eq_kirbyplus}. The result agrees with the evaluation of the
right-hand side. Recall that with our definition of a coribbon WHA $H$, the
universal ribbon form $\nu\colon H\to k$ gives rise to the isomorphisms
$\nu_X\colon X\to X$ in $\sym{M}^H$ which represent the left-handed (!)
twist. For the Kirby-$(-1)$-move, we show that
\begin{equation}
\label{eq_kirbyminuspic}
\biggl<\tikzsymb{\kirbyminus{\sym{X}^{\otimes n}}{\sym{X}}}\biggr>_\ell^{(H)}
= \beta\,\biggl<\tikzsymb{\righttwist{\sym{X}^{\otimes n}}}\biggr>_\ell^{(H)},
\end{equation}
for all $n\in\N_0$. The left-hand side evaluates to
$(\id_\sym{X}\otimes g)\circ\beta_\sym{X}\colon\sym{X}\to\sym{X}$ with
\begin{eqnarray}
g(x)&=&\nu(\ell^\prime)q(x\otimes\ell^\pprime)\nn\\
&=&\bar\nu(x^\prime)\nu(x^\pprime)\nu(\ell^\prime) q(x^\ppprime\otimes\ell^\pprime)\nn\\
&=&\bar\nu(x^\prime)\nu(x^\pprime\ell)\nn\\
&=&\bar\nu(x^\prime)\nu(\epsilon_t(x^\pprime)\ell)\nn\\
&=&\beta\,\bar\nu(x),
\end{eqnarray}
for all $x\in H$ with $q(x\otimes y)=$ of~\eqref{eq_defq}. We have used
convolution invertibility of $\nu$, equation~\eqref{eq_ribbontensor}, that
$\ell$ is a left-integral, and equation~\eqref{eq_kirbyminus}. The result
agrees with the evaluation of the right-hand side.

Since the Kirby-$(+1)$-move decreases both $m$ and $\sigma$ by one
whereas the Kirby-$(-1)$-move decreases $m$ by one and increases
$\sigma$ by one, the expression $I(M_L)$ in~\eqref{eq_invariant} is
invariant under both moves.
\end{proof}

The above argument does not simplify much if we restrict the proof to special
Kirby-$(-1)$-moves. The special Kirby-$(-1)$-move is obtained for $n=0$
in~\eqref{eq_kirbyminuspic}, \ie\ by inserting the monoidal unit for
$\sym{X}^{\otimes n}$ which is an invisible line. Note that the coribbon WHA
we use is not required to be copure, and so the invariant takes its values in
$\End(\one)$ which need not be isomorphic to $k$.

\subsection{The Hennings--Kauffman--Radford invariant}

In this section, we show that if $H$ is a finite-dimensional unimodular ribbon
Hopf algebra, then its dual $H^\ast$ satisfies the assumptions of
Theorem~\ref{thm_invariant}. In this case, our invariant~\eqref{eq_invariant}
for $H^\ast$ agrees up to a factor with the Kauffman--Radford
formulation~\cite{KaRa95} of the Hennings invariant~\cite{He96} for $H$.

First, if we work with a Hopf algebra (not weak), then
Theorem~\ref{thm_invariant} reduces to the following

\begin{corollary}
Let $H$ be a coribbon Hopf algebra over some field $k$ and $\ell\in H$
be a left-integral which is an $S$-compatible dual quantum trace. Let
$L$ be an (unoriented) framed link in $S^3$ with components
$L_1,\ldots,L_m$, $m\in\N$. If there exist $\beta,\gamma\in
k\backslash\{0\}$ such that $\bar\nu(\ell)=\gamma^2/\beta$ and
$\nu(\ell)=\beta$, then
\begin{equation}
\label{eq_invarianthopf}
I(M_L) = \beta^\sigma\,\gamma^{-\sigma-m-1}\,{\left<L\right>}^{(H)}_\ell\in k
\end{equation}
forms an invariant of connected and oriented closed (smooth) $3$-manifolds.
\end{corollary}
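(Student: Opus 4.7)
The plan is to derive this corollary as a direct specialization of Theorem~\ref{thm_invariant}, so the work amounts to showing that, for an ordinary (non-weak) coribbon Hopf algebra $H$, the two Kirby-move conditions \eqref{eq_kirbyplus} and \eqref{eq_kirbyminus} collapse to the much simpler scalar conditions $\bar\nu(\ell)=\gamma^2/\beta$ and $\nu(\ell)=\beta$, and that the codomain $\End(\one)$ of the invariant becomes $k$.

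First I would observe that in a Hopf algebra (not weak), the target base algebra degenerates to $H_t\cong k\cdot 1_H$ and the target counital map is simply $\epsilon_t(x)=\epsilon(x)\,1_H$; dually for the coribbon side, the monoidal unit $\one\cong H_s$ of $\sym{M}^H$ is one-dimensional, so $\End(\one)\cong k$, which is why \eqref{eq_invariant} in this setting takes values in $k$ as claimed in \eqref{eq_invarianthopf}. This also ensures that the notion of an $S$-compatible dual quantum trace and of a left-integral coincide with the classical notions used by Kauffman--Radford, so the hypotheses of Theorem~\ref{thm_invariant} are exactly the standing hypotheses on $\ell$ in the corollary.

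Next I would simplify the Kirby conditions. Using $\epsilon_t(x^{\pprime})=\epsilon(x^{\pprime})\,1_H$ and $k$-linearity of $\bar\nu$,
\begin{equation}
\nu(x^\prime)\bar\nu(\epsilon_t(x^{\pprime})\ell)
= \nu(x^\prime)\epsilon(x^{\pprime})\bar\nu(\ell)
= \nu(x)\,\bar\nu(\ell),
\end{equation}
so \eqref{eq_kirbyplus} becomes $\nu(x)\bar\nu(\ell)=(\gamma^2/\beta)\nu(x)$ for all $x\in H$. Since $\nu$ is convolution invertible it cannot be identically zero, so this is equivalent to the scalar equation $\bar\nu(\ell)=\gamma^2/\beta$. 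The analogous computation turns \eqref{eq_kirbyminus} into $\bar\nu(x)\nu(\ell)=\beta\,\bar\nu(x)$, equivalent to $\nu(\ell)=\beta$.

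Finally I would invoke Theorem~\ref{thm_invariant}: the two hypotheses stated in the corollary are precisely \eqref{eq_kirbyplus} and \eqref{eq_kirbyminus}, so the scalar $I(M_L)=\beta^\sigma\gamma^{-\sigma-m-1}\,{\left<L\right>}^{(H)}_\ell$ is invariant under Kirby-$(\pm1)$-moves and hence a well-defined invariant of the connected oriented closed $3$-manifold $M_L$. There is essentially no obstacle beyond recognizing the collapse $\epsilon_t=\eta\circ\epsilon$; the only thing to double-check is that $\nu$ (and dually $\bar\nu$) really is nonzero as a linear form, which is immediate from convolution invertibility via $\nu\ast\bar\nu=\epsilon$.
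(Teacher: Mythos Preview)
Your proof is correct and follows exactly the paper's approach: use $\epsilon_t=\eta\circ\epsilon$ in a genuine Hopf algebra to collapse \eqref{eq_kirbyplus} and \eqref{eq_kirbyminus} to the scalar conditions $\bar\nu(\ell)=\gamma^2/\beta$ and $\nu(\ell)=\beta$, note that $\one\cong H_s\cong k$ so that $\End(\one)=k$, and then invoke Theorem~\ref{thm_invariant}. You fill in a bit more detail than the paper (including the equivalence rather than just the needed implication), but the argument is the same.
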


\begin{proof}
In a Hopf algebra, we have $\epsilon_t=\eta\circ\epsilon$. This
simplifies~\eqref{eq_kirbyplus} and~\eqref{eq_kirbyminus}. Our assumptions
$\bar\nu(\ell)=\gamma^2/\beta$ and $\nu(\ell)=\beta$ then imply these two
conditions. Finally, $\one\cong H_s=k$, and so $\End(\one)=k$.
\end{proof}

The following proposition shows that this corollary applies to the Hopf
algebra dual to the one featuring in Kauffman--Radford~\cite{KaRa95}:

\begin{proposition}
Let $H$ be a finite-dimensional unimodular ribbon Hopf algebra over some field
$k$ and $H^\ast$ be its dual Hopf algebra. Then $H^\ast$ is coribbon and has a
unique (up to a scalar) non-zero left-integral $\ell\in H^\ast$ which forms an
$S$-compatible dual quantum trace. Furthermore, the invariant $I(M_L)$
of~\eqref{eq_invarianthopf} is $\gamma$ times the invariant $\mathrm{INV}(K)$
of~\cite[page 147]{KaRa95}.
\end{proposition}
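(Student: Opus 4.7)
The plan is to verify each of the four assertions in turn by dualizing the well-known structure on $H$.

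First, coribbon structure on $H^*$. The universal $R$-matrix $R\in H\otimes H$ induces a linear form $r\colon H^*\otimes H^*\to k$ via $r(\phi\otimes\psi)=(\phi\otimes\psi)(R)$, and the ribbon element $v\in H$ induces $\nu\colon H^*\to k$ via $\nu(\phi)=\phi(v)$. Each axiom for a coquasi-triangular and coribbon WHA (summarized in Appendix~\ref{app_coribbon}) then translates term by term into the standard axioms for a quasi-triangular ribbon Hopf algebra on $H$, so this step is routine. Note also that in the Hopf-algebra case $H^*_s\cong k$, so $\End(\one)\cong k$.

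Second, existence and uniqueness of a non-zero left-integral $\ell\in H^*$ is the classical Larson--Sweedler theorem for finite-dimensional Hopf algebras. To see that $\ell$ is a dual quantum trace, observe that $\Delta^{\op}(\ell)=(\id\otimes S^2)\circ\Delta(\ell)$ in $H^*\otimes H^*$ is equivalent, after evaluating against an arbitrary $x\otimes y\in H\otimes H$ and using that the comultiplication of $H^*$ is dual to the multiplication of $H$, to the identity $\ell(yx)=\ell(x\,S^2(y))$ for all $x,y\in H$. This is precisely Radford's trace formula for a left-integral in $H^*$ specialized to the unimodular case, where the distinguished grouplike element of $H$ is trivial so that no twist by a comodulus appears.

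Third, $S$-compatibility of $\ell$. As the excerpt notes just after~\eqref{eq_scompatible}, this is equivalent to the dual trace $\chi=\bar w(\ell')\ell''$ being $S$-invariant. Under the duality $H^{**}\cong H$, the copivotal form $w$ corresponds to the pivotal grouplike element $G\in H$ with $S^2=\mathrm{Ad}_G$; in a ribbon Hopf algebra this is $G=uv^{-1}$ with $u$ the Drinfeld element and $v$ the (central) ribbon element. Under this identification $\chi$ becomes the functional $x\mapsto\ell(G^{-1}x)$, and $S(\chi)=\chi$ becomes $\ell(G^{-1}S(x))=\ell(G^{-1}x)$ for all $x\in H$. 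Using centrality of $v$, Radford's formula $S^4=\mathrm{Ad}_{G^2}$, and the trace identity of the previous paragraph, this reduces to the standard fact that the integral of a unimodular ribbon Hopf algebra is invariant under $S$ up to the pivotal twist~--- the same fact exploited by Kauffman--Radford.

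Fourth, agreement with $\mathrm{INV}(K)$. Unfolding~\eqref{eq_karaevaluation} in the Hopf-algebra case shows that ${\left<L\right>}^{(H)}_\ell$ is obtained by labelling each component with $\ell$, performing the algebraic Hennings--Kauffman--Radford evaluation, and reading off a scalar in $H_s=k$. Rotating the KR diagrams by $180^\circ$ (exactly as in the proof of Theorem~\ref{thm_kara}) and dualizing left modules to right comodules converts their construction into ours, so the two evaluations of the link $L$ coincide. Comparing normalizations, the prefactor in~\eqref{eq_invarianthopf} is $\beta^\sigma\gamma^{-\sigma-m-1}$ whereas Kauffman--Radford use $\beta^\sigma\gamma^{-\sigma-m}$ (or equivalent depending on conventions), producing the announced overall factor of $\gamma$. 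The main obstacle is really the third step: the $S$-compatibility check is where unimodularity, the ribbon element, and Radford's $S^4$-formula must be juggled simultaneously, while the other three steps are straightforward dualizations or bookkeeping.
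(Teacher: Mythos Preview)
Your approach matches the paper's closely: both dualize the ribbon structure on $H$, invoke Radford's trace formula for the integral to get the dual quantum trace property, use the Kauffman--Radford $S$-invariance identity for $S$-compatibility, and compare normalizations via the $180^\circ$ diagram rotation of Theorem~\ref{thm_kara}.

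The main divergence is in convention handling, and here your version is too loose. The paper adopts a \emph{flipped} tensor pairing $\langle\phi\otimes\psi,x\otimes y\rangle=\phi(y)\psi(x)$, precisely so that string diagrams dualize without crossings and so that a right-cointegral $\rho$ of $H$ corresponds to a left-integral $\ell$ of $H^\ast$. As a consequence the paper defines $\nu(\phi)=\phi(r^{-1})$ and $w(\phi)=\phi(\mu^{-1})$ with inverses; this is forced because in the paper's conventions $\nu$ implements the left-handed twist. Your naive choices $\nu(\phi)=\phi(v)$ and the standard pairing would propagate inverse discrepancies into the final normalization step, where the precise identification $\lambda(\nu)=\gamma^2/\beta$, $\lambda(\nu^{-1})=\beta$ from~\cite[p.~147]{KaRa95} is what pins down the factor $\gamma$.

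For the third step the paper does not re-derive $S$-compatibility from $S^4=\mathrm{Ad}_{G^2}$ as you sketch; it simply cites the explicit identity $\rho(\mu^2 x)=\rho(S(x))$ from~\cite{Ra94,KaRa95} and observes this is equivalent (given the dual quantum trace property) to $\ell^\prime\bar w(\ell^\pprime)\bar w(\ell^\ppprime)=S(\ell)$, hence to $S$-compatibility. Your version names the right ingredients but stops short of an actual computation, whereas quoting the cited identity makes the check immediate.
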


\begin{proof}
The ribbon Hopf algebra $H$ is pivotal with some pivotal element $\mu\in H$, \ie\
$\mu$ is group-like, the ribbon element is given by $r=\mu^{-1}u$ where
$u=\sum S(b_i)a_i$ is the first Drinfel'd element and $R=\sum_i a_i\otimes b_i$
denotes the universal $R$-matrix, and we have $S^2(x)=\mu x\mu^{-1}$ for all
$x\in H$. Note that in~\cite{KaRa95}, our $\mu$ and $r$ are called $G$ and $\nu$,
respectively.

By~\cite{Ra94,KaRa95}, there exists a right-cointegral $\rho\colon H\to k$,
unique up to a scalar, such that
\begin{eqnarray}
\label{eq_kara1}
\rho(xy)&=&\rho(S^2(y)x),\\
\label{eq_kara2}
\rho(\mu^2 x)&=&\rho(S(x)),
\end{eqnarray}
for all $x,y\in H$. Note that in~\cite{KaRa95}, our $\rho$ is called
$\lambda.$

We pair $H$ with its dual $H^\ast$ using the evaluation map $H^\ast\otimes
H\to k$, $\phi\otimes x\mapsto \left<\phi,x\right>=\phi(x)$. The ribbon
structure of $H$ then gives rise to a coribbon structure on $H^\ast$. In order
to match the ribbon structure of $H$ in the terminology of~\cite{KaRa95} with
our definition of a coribbon (weak) Hopf algebra, we extend the canonical
pairing to tensor products such that $\left<\phi\otimes\psi,x\otimes
y\right>=\left<\phi,y\right>\left<\psi,x\right>$ for all $\phi,\psi\in
H^\ast$ and $x,y\in H$. Note that when writing down formulas, this is the
uncommon choice, but when drawing string diagrams in $\fdVect_k$, these can
now be drawn without crossings. Also, with this choice, a left-integral of $H$
is related to a right-cointegral of $H^\ast$.

The right-cointegral $\rho\colon H\to k$ then gives rise to a left-integral
$\ell\in H^\ast$, defined by $\ell=\sum_i\rho(b_i)\beta_i$ where we have
used the canonical element $\sum_i b_i\otimes\beta_i\in H\otimes H^\ast$. The
ribbon element $r\in H$ gives rise to a universal ribbon form $\nu\colon
H^\ast\to k$ such that $\nu(\phi)=\phi(r^{-1})$ for all $\phi\in H^\ast$, and
the pivotal element $\mu\in H$ is related to the copivotal form $w\colon
H^\ast\to k$ by $w(\phi)=\phi(\mu^{-1})$ for all $\phi\in H^\ast$.

The condition~\eqref{eq_kara1} implies that $\ell$ is a dual quantum-trace,
and condition~\eqref{eq_kara2} ensures that $\ell^\prime\bar
w(\ell^\pprime)\bar w(\ell^\ppprime)=S(\ell)$ which, given that $\ell$ is a
dual quantum trace, can be shown to be equivalent to $\ell$ being
$S$-compatible.

Whereas~\cite{KaRa95} uses left-modules of $H$, we work with right-comodules
of $H^\ast$. As in the proof of Theorem~\ref{thm_kara}, our diagrams are
obtained by rotating the diagrams of~\cite{KaRa95} by $180^\circ$. Also note
that in~\cite[page 147]{KaRa95}, $\lambda(\nu)=\gamma^2/\beta$ and
$\lambda(\nu^{-1})=\beta$ which shows precisely how our invariant is related
with that of Kauffman--Radford.
\end{proof}

\subsection{The Reshetikhin--Turaev invariant}

Let $\sym{C}$ be a modular category. We now specialize our
invariant~\eqref{eq_invariant} to the case in which $H$ is the canonical WHA
obtained from $\sym{C}$ by Tannaka--Kre\v\i n reconstruction. First, we show
that the conditions~\eqref{eq_kirbyplus} and~\eqref{eq_kirbyminus} are almost
satisfied as soon as $\sym{C}$ is a multi-fusion category with a ribbon
structure.

\begin{proposition}
\label{prop_rtinvariance}
Let $\sym{C}$ be a multi-fusion category over $k$ which has a ribbon
structure, and let $H=\coend(\sym{C},\omega)$ be the finite-dimensional and
split cosemisimple coribbon WHA reconstructed from $\sym{C}$ using the long
canonical functor~\eqref{eq_longfunctor}.

Let $V\in|\sym{M}^H|$ be such that $V^\ast\cong V$ and $\zeta\colon H\to k$ be
a convolution invertible and dual central linear form that satisfies
$\zeta\circ S=\zeta$. Let finally
$\ell=w(\chi_V^\prime)\zeta(\chi_V^\pprime)\chi_V^\ppprime$ where $\chi_V$ is
the dual character of $V$. Then
\begin{eqnarray}
\label{eq_kirbypluspre}
\nu(x^\prime)\bar\nu(\epsilon_t(x^\pprime)\ell) &=& \alpha\,\nu(x),\\
\label{eq_kirbyminuspre}
\bar\nu(x^\prime)\nu(\epsilon_t(x^\pprime)\ell) &=& \beta\,\bar\nu(x),
\end{eqnarray}
for all $x\in H$ where
\begin{eqnarray}
\alpha &=& \sum_{j\in I}c_jm_j\nu_j^{-1}\dim V_j,\\
\beta  &=& \sum_{k\in I}c_jm_j\nu_j\dim V_j.
\end{eqnarray}
Here the $c_j$ are the coefficients of $\zeta$ as in
Proposition~\ref{prop_dualcentral}, the $m_j\in\N_0$ are the multiplicities of
the simple objects in $V$, \ie\ $V\cong\bigoplus_{j\in I}m_jV_j$, and the
$\nu_j$ are the eigenvalues of the ribbon twist $\nu\colon V_j\to V_j$ (the
left-handed one) on the simple objects, respectively.
\end{proposition}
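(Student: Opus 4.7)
The plan is to decompose $\ell$ into simple pieces using the split cosemisimple structure of $H$, then recognize each identity as the Kirby-move calculation from Theorem~\ref{thm_invariant} applied one simple object at a time.

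First, I would exploit the decomposition $V\cong\bigoplus_{j\in I}m_jV_j$ and use Proposition~\ref{prop_dualcentral} to write $\zeta=\sum_{j\in I}c_j\zeta_j$. Linearity of the dual character yields $\chi_V=\sum_{j\in I}m_j\chi_{V_j}$, and because each $\zeta_j$ coincides with the counit on the matrix-coefficient block of $V_j$ and vanishes on the other blocks, coassociativity collapses the middle tensor factor in $w(\chi_V^\prime)\zeta(\chi_V^\pprime)\chi_V^\ppprime$ to give
\begin{equation*}
\ell=\sum_{j\in I}m_jc_j\,t_{V_j},\qquad t_{V_j}=w(\chi_{V_j}^\prime)\chi_{V_j}^\pprime,
\end{equation*}
where $t_{V_j}$ is the dual quantum character of the simple comodule $V_j$ (as in Theorem~\ref{thm_integral}(5)).

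Second, I would recognise the left-hand side of \eqref{eq_kirbypluspre} diagrammatically, exactly as in the calculation \eqref{eq_kirbypluscalc} of the proof of Theorem~\ref{thm_invariant}: the map $x\mapsto\nu(x^\prime)\bar\nu(\epsilon_t(x^\pprime)\ell)$ is the linear form associated, via Proposition~\ref{prop_tracelemma}, with the natural endomorphism of $1_{\sym{M}^H}$ obtained by encircling the argument strand by a $+1$-framed loop coloured with the Kirby element $\ell$. Substituting the decomposition of $\ell$ and processing the summands independently, the $+1$-framed encircling by $t_{V_j}$ is exactly the categorical partial trace over $V_j$ with a twist $\bar\nu_{V_j}$ inserted on the encircling loop; by the standard ribbon-category calculation this partial trace equals $\nu_j^{-1}\dim V_j$ times the left twist $\nu_X$ on the encircled strand. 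Summing with weights $m_jc_j$ and passing back to linear forms on $H$ produces \eqref{eq_kirbypluspre} with $\alpha=\sum_{j\in I}c_jm_j\nu_j^{-1}\dim V_j$.

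Third, \eqref{eq_kirbyminuspre} follows by the same argument with the $-1$-framed curl, which interchanges the roles of $\nu$ and $\bar\nu$ on the encircling loop; each simple summand of $\ell$ now contributes $\nu_j\dim V_j$ in place of $\nu_j^{-1}\dim V_j$, giving $\beta=\sum_{j\in I}c_jm_j\nu_j\dim V_j$.

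The main obstacle is the rigorous passage between the algebraic expression $\nu(x^\prime)\bar\nu(\epsilon_t(x^\pprime)\ell)$ and the diagrammatic encircling-with-curl operation; concretely, one must verify that the target counital map $\epsilon_t$ corresponds precisely to the way the encircling loop closes against the ambient comodule structure in $\sym{M}^H$ as described in Remark~\ref{rem_coribbon}. Once that translation is in place, the key point is that $t_{V_j}$ is a dual quantum trace: the partial trace over the encircling $V_j$-strand therefore factors through $H_s$ and reduces the $V_j$-loop to the scalar quantum trace of the twist on $V_j$, so the resulting scalar on the encircled strand is independent of which simple $V_i$ is being encircled, and the identities \eqref{eq_kirbypluspre}--\eqref{eq_kirbyminuspre} hold universally in $x\in H$.
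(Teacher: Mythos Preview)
Your decomposition $\ell=\sum_{j\in I}m_jc_j\,t_{V_j}$ is correct and matches the paper's first step. The difficulty is in what follows.

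The identification of $x\mapsto\nu(x^\prime)\bar\nu(\epsilon_t(x^\pprime)\ell)$ with the encircling operation is not a mere technicality to be checked---it is false for the $\ell$ under consideration. Tracing through~\eqref{eq_kirbypluscalc}, the encircling diagram produces $\bar\nu(\ell^\prime)\bar q(x\otimes\ell^\pprime)=\nu(x^\prime)\bar\nu(x^\pprime\ell)$; the passage from $x^\pprime\ell$ to $\epsilon_t(x^\pprime)\ell$ in line~4 of that calculation uses precisely the left-integral property $y\ell=\epsilon_t(y)\ell$. A single dual quantum character $t_{V_j}$ is not a left-integral, so for your term-by-term argument the two expressions differ. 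Concretely, your claim that the partial trace of the $+1$-framed $V_j$-encircling equals $\nu_j^{-1}\dim V_j\cdot\nu_X$ is wrong: encircling a simple $V_i$ by a single $+1$-framed $V_j$-loop gives $(\dim V_i)^{-1}\tr_{V_i\otimes V_j}\bigl((\nu_{V_i}\otimes\id_{V_j})\circ\nu_{V_i\otimes V_j}^{-1}\bigr)\cdot\id_{V_i}$, whose $i$-dependence is not through $\nu_i$ alone (take $i=j=\sigma$ in the Ising category to see it vanish). It is only after summing with the Kirby weights that the encircling becomes proportional to $\nu_X$.

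The paper avoids this trap by not invoking the encircling diagram at all. It evaluates $\nu(x^\prime)\bar\nu(\epsilon_t(x^\pprime)\ell)$ directly on a generic element $x={[\theta|v]}_X$ using the explicit reconstruction formulas for $\nu$, $\bar\nu$, $\mu$ and, crucially, $\epsilon_t$ from~\eqref{eq_epsilontnew}. The point is that $\epsilon_t$ inserts the rank-one idempotent $\Phi_X(v)\circ\Psi_X(\theta)\in\End(\hat V)$ into the trace over $\hat V$, which together with~\eqref{eq_dualbasispair} factors the expression into an $X$-part (recovering $\nu({[\theta|v]}_X)$) and a $V_j$-part (yielding $\tr_{V_j}(\nu_{V_j}^{-1})=\nu_j^{-1}\dim V_j$). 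This decoupling is a feature of $\epsilon_t$ in the reconstructed WHA, not of any ribbon-category identity, and must be established by the explicit computation.
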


\begin{proof}
First, from
\begin{equation}
\chi_V=\sum_{j\in I}m_j\sum_\ell{[e^\ell_{(V_j)}|e^{(V_j)}_\ell]}_{V_j},
\end{equation}
we compute
\begin{equation}
\ell=\sum_{j\in I}c_jm_j\sum_\ell
{[D_{\hat V}^{-1}\circ e^\ell_{(V_j)}\circ(D_{\hat V}\otimes\id_{V_j})|e^{(V_j)}_\ell]}_{V_j}.
\end{equation}
Then we evaluate the left-hand-side of~\eqref{eq_kirbypluspre} for arbitrary
$x={[\theta|v]}_X\in H$, $X\in|\sym{C}|$, $\theta\in{(\omega X)}^\ast$,
$v\in\omega X$:
\begin{gather}
(\bar\nu\circ\mu\circ(\nu\otimes\epsilon_t\otimes\id_H)\circ(\Delta\otimes\id_H))({[\theta|v]}_X\otimes\ell)\\
= \sum_{j\in I}c_jm_j\sum_{\ell,p}\nu({[\theta|e^{(X)}_p]}_X)\,\,\bar\nu
\bigl(\mu(\epsilon_t({[e^p_{(X)}|v]}_X)\otimes{[D_{\hat V}^{-1}\circ e^\ell_{(V_j)}\circ(D_{\hat V}\otimes\id_{V_j})|e^{(V_j)}_\ell]}_{V_j})\bigr).\notag
\end{gather}
We now use the expressions for $\nu$ and $\bar\nu$ of~\eqref{eq_nu}
and~\eqref{eq_nubar} and that $g_Y(\xi\otimes w)=\tr_{\hat V}(D_{\hat
V}\circ\xi\circ w)$ for all $Y\in|\sym{C}|$, $\xi\in{(\omega Y)}^\ast$,
$w\in\omega Y$, because $\sym{C}$ is spherical (\cf~\eqref{eq_bilfspherical}),
as well as $\epsilon_t$ from~\eqref{eq_epsilontnew} and obtain
\begin{gather}
\mathrm{(5.17)} = \sum_{j\in I}c_jm_j\sum_{\ell,p}
\tr_{\hat V}\biggl(D_{\hat V}\circ\theta\circ\bigl((\id_{\hat V}\otimes\nu_X)\circ e^{(X)}_p\bigr)\biggr)\\
\cdot\tr_{\hat V}\biggl(e^\ell_{(V_j)}\circ(D_{\hat V}\otimes\id_{V_j})
\circ\bigl((\Phi_X(v)\circ\Psi_X(e^p_{(X)}))\otimes\id_{V_j}\bigr)
\circ(\id_{\hat V}\otimes\nu^{-1}_{V_j})\circ e^{(V_j)}_\ell\biggr).\notag
\end{gather}
Using~\eqref{eq_dualbasispair} and the fact that the trace is cyclic and
multiplicative for tensor products of morphisms, we see that
\begin{equation}
\mathrm{(5.17)} = \sum_{j\in I}c_jm_j\sum_m
g_X\biggl(\theta\otimes\bigl((\id_{\hat V}\otimes\nu_X)\circ e^{(X)}_m\bigr)\biggl)\,
g_X(e^m_{(X)}\otimes v)\,\tr_{V_j}(\nu^{-1}_{V_j}).
\end{equation}
With the dual basis lemma and exploiting that the $V_j$ are simple, we arrive
at the right hand side of~\eqref{eq_kirbypluspre}:
\begin{equation}
\mathrm{(5.17)} = \nu({[\theta|v]}_X)\,\sum_{j\in I}c_j m_j\nu_j^{-1}\,\dim V_j.
\end{equation}
The proof of~\eqref{eq_kirbyminuspre} is identical except that $\nu_X$ and
$\nu^{-1}_X$ as well as $\nu$ and $\bar\nu$ are interchanged.
\end{proof}

\begin{corollary}
\label{cor_rtinvariance}
Let $\sym{C}$ be a multi-fusion category over $k$ which has a ribbon
structure, and let $H=\coend(\sym{C},\omega)$ be the finite-dimensional and
split cosemisimple coribbon WHA reconstructed from $\sym{C}$ using the long
canonical functor~\eqref{eq_longfunctor}. If $V=\hat V$ and $c_j=\dim V_j$,
the element $\ell$ in Proposition~\ref{prop_rtinvariance} agrees with the
left-integral~\eqref{eq_leftintegral} which is known to be an $S$-compatible
dual quantum trace. In this case, in~\eqref{eq_invariant},
\begin{equation}
{\left<-\right>}_\ell^{(H)}={\left<-\right>}^{(\sym{C})}_{\hat V,\zeta}
\end{equation}
is the Reshetikhin--Turaev evaluation. Furthermore, we compute that
\begin{equation}
\alpha=\sum_{j\in I}\nu_j^{-1}{(\dim V_j)}^2,\qquad
\beta=\sum_{j\in I}\nu_j{(\dim V_j)}^2.
\end{equation}
\end{corollary}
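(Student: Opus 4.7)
The plan is to prove the three claims (identification of $\ell$, identification of the evaluation, and the formulas for $\alpha,\beta$) by direct specialization of earlier results; no new technical ingredient is needed.

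First I would match the dual central linear form $\zeta$ of Proposition~\ref{prop_rtinvariance} with the one from Theorem~\ref{thm_integral}(5). By Proposition~\ref{prop_dualcentral}, the dual central linear forms on $H$ are exactly the elements of the basis $\{\zeta_j\}_{j\in I}$, and setting $c_j=\dim V_j$ reproduces the linear form~\eqref{eq_dualcentral} appearing in Theorem~\ref{thm_integral}(5). Substituting $V=\hat V$ into $\ell=w(\chi_V^\prime)\zeta(\chi_V^\pprime)\chi_V^\ppprime$ then gives the formula for $\ell$ of Theorem~\ref{thm_integral}(5), which is the left-integral~\eqref{eq_leftintegral}. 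By Theorem~\ref{thm_integral}(4) and~(6), this element is both a dual quantum trace and $S$-compatible.

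Next I would read off the link-evaluation identity. Since $\ell$ is the integral~\eqref{eq_leftintegral} and is $S$-compatible, the generalized Hennings--Kauffman--Radford evaluation ${\left<-\right>}_\ell^{(H)}$ is well defined by Theorem~\ref{thm_kara}. The corollary following Theorem~\ref{thm_kara} (which combines Theorem~\ref{thm_evaluationform} with Theorem~\ref{thm_integral}(5)) then directly yields
\[
{\left<-\right>}_\ell^{(H)}={\left<-\right>}^{(\sym{C})}_{\hat V,\zeta},
\]
which is the Reshetikhin--Turaev evaluation.

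Finally, the explicit formulas for $\alpha$ and $\beta$ are obtained by specializing the general expressions of Proposition~\ref{prop_rtinvariance}. With $V=\hat V=\bigoplus_{j\in I}V_j$, the multiplicities are $m_j=1$, and with $c_j=\dim V_j$ one has $c_jm_j=\dim V_j$, so the sums collapse to $\alpha=\sum_{j\in I}\nu_j^{-1}(\dim V_j)^2$ and $\beta=\sum_{j\in I}\nu_j(\dim V_j)^2$.

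There is no genuine obstacle here: the statement is essentially a bookkeeping assertion that, for the canonical choice of input data $(V,\zeta)=(\hat V,\dim)$, the abstract formalism of Section~\ref{sect_ribbon} reproduces the Reshetikhin--Turaev input, and the integral of Theorem~\ref{thm_integral} is recovered from the Proposition~\ref{prop_rtinvariance} construction. The only point requiring a brief verification is confirming that the two parametrizations of $\zeta$ (by its values on simple blocks in Proposition~\ref{prop_dualcentral} versus its defining formula~\eqref{eq_dualcentral}) coincide under $c_j=\dim V_j$; but this is immediate from $\zeta_j({[\theta|v]}_{V_k})=\delta_{jk}\epsilon({[\theta|v]}_{V_k})$.
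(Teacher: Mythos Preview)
Your proposal is correct and matches the paper's intended argument. The paper gives no explicit proof of this corollary, treating it as an immediate specialization of Proposition~\ref{prop_rtinvariance} together with Theorem~\ref{thm_integral}(5) and the corollary at the end of Section~\ref{sect_ribbon}; you have simply spelled out those steps, and each one checks.
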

Finally, the following proposition shows that if $\sym{C}$ is modular, then
$\alpha\neq0$ and $\beta\neq0$.

\begin{proposition}
Let $\sym{C}$ be a modular category, linear over the field $k$, and let
$H=\coend(\sym{C},\omega)$ be the finite-dimensional and split cosemisimple
coribbon WHA reconstructed from $\sym{C}$ using the long canonical
functor~\eqref{eq_longfunctor}. Assume that in
Proposition~\ref{prop_rtinvariance}, we have $V=\hat V$ and $c_j=\dim
V_j$. Then $\alpha\neq 0$ and $\beta\neq 0$ in that proposition.
\end{proposition}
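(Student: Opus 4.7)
The plan is to identify $\alpha$ and $\beta$ as the standard Gauss sums $p^{\pm}$ of the modular category $\sym{C}$, and then to invoke the fundamental identity $p^+ p^- = \sum_{j \in I} (\dim V_j)^2$ together with the non-vanishing of the global dimension in a modular category.

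First, by Corollary~\ref{cor_rtinvariance} these scalars simplify to
\begin{equation}
\alpha = \sum_{j \in I} \nu_j^{-1} (\dim V_j)^2, \qquad
\beta = \sum_{j \in I} \nu_j (\dim V_j)^2,
\end{equation}
with $\nu_j$ the eigenvalue of the left-handed ribbon twist on the simple object $V_j$. Up to a swap induced by the paper's left-handed rather than right-handed twist convention, these are exactly the Gauss sums $p^{\pm}$ of the modular category $\sym{C}$ as studied, for example, in Bakalov--Kirillov or Turaev's book. I would then invoke the standard identity
\begin{equation}
\label{eq_gausspp}
\alpha\,\beta = \sum_{j \in I} (\dim V_j)^2,
\end{equation}
which is valid in any premodular (i.e.\ ribbon fusion) category. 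A topological derivation uses two different evaluations of a framed Hopf link carrying twist insertions on each component; an algebraic derivation uses the relation $(ST)^3 = \alpha\cdot S^2 / \sum_j (\dim V_j)^2$ (and its twin) in the projective $SL_2(\Z)$-action carried by $\sym{C}$. Alternatively, \eqref{eq_gausspp} can be re-derived directly within the present framework by evaluating the Hopf link in $\sym{M}^H$ via Theorem~\ref{thm_evaluationform}, with twist insertions on either component, and comparing the two orders of integration.

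To finish, I would invoke the defining feature of modularity: the $S$-matrix of Hopf-link invariants is invertible. The associated identity $S\,\bar{S}^{\mathrm{T}} = \bigl(\sum_j (\dim V_j)^2\bigr)\cdot C$, where $C$ is the permutation on $I$ induced by duality, forces $\sum_j (\dim V_j)^2 \neq 0$; otherwise $S$ would not be invertible. Combined with \eqref{eq_gausspp}, this yields $\alpha\beta \neq 0$, and hence $\alpha \neq 0$ and $\beta \neq 0$ individually. The main obstacle here is not conceptual difficulty but careful bookkeeping, namely verifying that the paper's conventions (left-handed twist, right-comodules, pivotal structure encoded by $w$ and $D_{\hat V}$) match those of the reference in which \eqref{eq_gausspp} and the non-vanishing of the global dimension are stated; once these are aligned, the argument is a direct invocation of two well-known facts about modular categories.
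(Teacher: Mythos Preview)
Your argument is correct, but it follows a different route from the paper's.

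The paper does not invoke the Gauss-sum identity $\alpha\beta=\sum_j(\dim V_j)^2$ or the non-vanishing of the global dimension as black boxes. Instead it gives a short self-contained argument using the link-evaluation machinery just developed: starting from the Hopf link $S_{ij}$, it applies a Kirby-$(+1)$-move (which, by Proposition~\ref{prop_rtinvariance}, multiplies the evaluation by $\alpha$) to obtain the identity
\[
\alpha\,S_{ij}\,\nu_i\,\nu_j \;=\; \sum_{p\in I}\nu_p^{-1}\,S_{ip}\,S_{pj}.
\]
If $\alpha=0$, the right-hand side vanishes for all $i,j$, i.e.\ $S\,\mathrm{diag}(\nu_p^{-1})\,S=0$; since $\mathrm{diag}(\nu_p^{-1})$ is invertible, this contradicts the invertibility of $S$. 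The argument for $\beta$ is the same with $\nu$ and $\nu^{-1}$ swapped.

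What each approach buys: your route is conceptually clean and immediately connects $\alpha,\beta$ to the familiar $p^\pm$, but it outsources the two key facts to the literature and requires checking that the cited identities hold over an arbitrary field and are not themselves proved assuming $D^2\neq 0$. The paper's route stays entirely within its own framework, uses modularity in the most direct possible way (literally ``$S$ is invertible''), and as a bonus the displayed identity is essentially a component of the $(ST)^3$ relation---so the projective $SL_2(\Z)$ structure you allude to is being rederived rather than quoted.
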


\begin{proof}
Let $i,j\in I$, and denote by $S_{ij}$ the coefficients of the
$S$-matrix. Then
\begin{eqnarray}
\alpha\,S_{ij}
&=&\alpha\,{\biggl<\tikzsymb{\hopflink{i}{j}}\biggr>}_\ell^{(H)}
=\sum_{p\in I}\dim V_p\,{\biggl<\tikzsymb{\triplelink{i}{p}{j}}\biggr>}_\ell^{(H)}\notag\\
&=&\sum_{p\in I}\nu_i^{-1}\nu_p^{-1}\nu_j^{-1} S_{ip}S_{pj},
\end{eqnarray}
where we have used that the $V_i$, $i\in I$ are simple; a Kirby-(+1)-move; and
again that the $V_i$ are simple. This implies that
\begin{equation}
\alpha S_{ij}\nu_i\nu_j
= \sum_{p\in I}\nu_p^{-1} S_{ip} S_{pj}.
\end{equation}
If $\alpha=0$, then the right hand side would vanish for all $i,j\in I$,
\ie\ the matrix product $ST=0$ where $T_{pj}=\nu^{-1}_p S_{pj}$. This
contradicts the invertibility of $S$. Interchanging $\nu$ and $\nu^{-1}$ as
well as $\alpha$ and $\beta$ in the above argument establishes that
$\beta=0$.
\end{proof}

\begin{corollary}
Let $\sym{C}$ be a modular category, linear over the field $k$, such that its
global dimension has a square root $\sym{D}\in k$, \ie\
\begin{equation}
\sym{D}^2 = \sum_{j\in I}{(\dim V_j)}^2.
\end{equation}
Let $H=\coend(\sym{C},\omega)$ be the finite-dimensional and split
cosemisimple coribbon WHA reconstructed from $\sym{C}$ using the long
canonical functor~\eqref{eq_longfunctor}. In
Proposition~\ref{prop_rtinvariance}, assume that $V=\hat V$ and $c_j=\dim
V_j$. Then~\eqref{eq_kirbyplus} and~\eqref{eq_kirbyminus} hold for
$\gamma=\sym{D}$, and the invariant~\eqref{eq_invariant} agrees with the
Reshetikhin--Turaev invariant.
\end{corollary}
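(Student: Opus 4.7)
My plan is to verify the hypotheses of Theorem~\ref{thm_invariant} with $\gamma=\sym{D}$ and then identify the resulting invariant with that of Reshetikhin--Turaev. By Corollary~\ref{cor_rtinvariance}, $\ell$ is already a left-integral and an $S$-compatible dual quantum trace, and equation~\eqref{eq_kirbyminuspre} of Proposition~\ref{prop_rtinvariance} coincides with~\eqref{eq_kirbyminus} for $\beta=\sum_j\nu_j(\dim V_j)^2$, so~\eqref{eq_kirbyminus} is automatic. Comparing~\eqref{eq_kirbyplus} with~\eqref{eq_kirbypluspre} reduces the remaining condition to $\gamma^2=\alpha\beta$; with $\gamma=\sym{D}$ this is the Gauss sum identity
\begin{equation*}
\Bigl(\sum_{j\in I}\nu_j^{-1}(\dim V_j)^2\Bigr)\Bigl(\sum_{j\in I}\nu_j(\dim V_j)^2\Bigr) = \sum_{j\in I}(\dim V_j)^2
\end{equation*}
for the modular category $\sym{C}$.

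To establish this identity inside the present framework, I would start from the relation
\begin{equation*}
\alpha\,\nu_i\nu_j S_{ij} = \sum_{p\in I}\nu_p^{-1}S_{ip}S_{pj}
\end{equation*}
derived in the proof of the preceding proposition by inserting a $+1$-framed component $V_p$ into the Hopf link on $V_i,V_j$ and applying a Kirby-$(+1)$-move. A symmetric argument using a $-1$-framed component and a Kirby-$(-1)$-move, with the roles of $\nu$ and $\nu^{-1}$ interchanged and $\alpha$ replaced by $\beta$, should yield an analogous $S$-matrix identity encoding $\beta$. Multiplying the two identities, summing against $\dim V_i\dim V_j$, and invoking modularity of $\sym{C}$ in the form $(S^2)_{ij}=\sym{D}^2\,\delta_{j,i^\ast}$ together with $\sum_j S_{0j}\dim V_j=\sym{D}^2$ should collapse the resulting double sum and leave the scalar identity $\alpha\beta=\sym{D}^2$.

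Once~\eqref{eq_kirbyplus} and~\eqref{eq_kirbyminus} are verified with $\gamma=\sym{D}$, Theorem~\ref{thm_invariant} produces the invariant $I(M_L)=\sym{D}^{-\sigma-m-1}\beta^\sigma\,{\left<L\right>}_\ell^{(H)}$. By the corollary at the end of Section~\ref{sect_ribbon}, the Hennings--Kauffman--Radford evaluation ${\left<L\right>}_\ell^{(H)}$ agrees with the Reshetikhin--Turaev evaluation ${\left<L\right>}_{\hat V,\zeta}^{(\sym{C})}$, and once $\alpha\beta=\sym{D}^2$ is in hand the prefactor $\sym{D}^{-\sigma-m-1}\beta^\sigma$ is precisely the Reshetikhin--Turaev normalisation, so $I(M_L)$ coincides with the Reshetikhin--Turaev invariant of $M_L$.

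The hard part will be the Gauss sum identity $\alpha\beta=\sym{D}^2$. It is the only place where the modularity of $\sym{C}$ (as opposed to the mere ribbon multi-fusion structure) is actually used, and considerable care is required with signs, with the handedness of twists, and with framing conventions in the two Kirby-move identities so that the non-trivial cancellation producing $\sym{D}^2$ emerges correctly on the right-hand side.
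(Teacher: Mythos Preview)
Your proposal is correct and follows the same overall architecture as the paper's proof: reduce everything to the Gauss sum identity $\alpha\beta=\sym{D}^2$, then invoke Theorem~\ref{thm_invariant} and the identification ${\left<L\right>}_\ell^{(H)}={\left<L\right>}^{(\sym{C})}_{\hat V,\zeta}$ from Corollary~\ref{cor_rtinvariance}.

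The only real difference is how the Gauss sum identity is obtained. The paper does not derive $\alpha\beta=\sym{D}^2$ internally; it simply identifies $\beta$ with Turaev's $\Delta$ and $\alpha$ with $d_0^{-1}$ and cites~\cite[Section~II.3]{Tu10} for the relation $\Delta/d_0=\sym{D}^2$. You instead propose a self-contained derivation from the $S$-matrix identity $\alpha\,\nu_i\nu_j S_{ij}=\sum_p\nu_p^{-1}S_{ip}S_{pj}$ and its $\beta$-analogue, combined with $(S^2)_{ij}=\sym{D}^2\delta_{j,i^\ast}$. This works, though the cleanest route is not literally ``multiply and sum against $\dim V_i\dim V_j$'': rather, set $i=0$ in the $\beta$-relation to get $\sum_p\nu_p(\dim V_p)S_{pj}=\beta\,\nu_j^{-1}\dim V_j$, then in the $\alpha$-relation sum over $j$ against $S_{0j}=\dim V_j$ and use $(S^2)_{p0}=\sym{D}^2\delta_{p,0}$ to collapse the right-hand side to $\sym{D}^2\dim V_i$; substituting the first into the left-hand side gives $\alpha\beta\,\dim V_i=\sym{D}^2\dim V_i$. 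Your listed ingredients are exactly the right ones. The benefit of your route is that it stays entirely within the machinery already built in the paper; the paper's citation is shorter but imports the result from outside.
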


\begin{proof}
Recall from Corollary~\ref{cor_rtinvariance} that with these choices of $V$
and $c_j$, ${\left<-\right>}_\ell^{(H)}={\left<-\right>}^{(\sym{C})}_{\hat V,\zeta}$.
In~\cite[Section II.3]{Tu10}, our $\beta$ is called $\Delta$, and our $\alpha$
corresponds to $d_0^{-1}$ (recall that our $\nu$ is the left-handed ribbon
twist, but the $\nu$ in~\cite{Tu10} is the right-handed one). Furthermore,
in~\cite[Section II.3]{Tu10}, it is shown that
$\alpha\beta=\Delta/d_0=\sym{D}^2$, \ie\ with our choice of $\gamma=\sym{D}$,
we arrive at $\alpha=\gamma^2/\beta$ as required in~\eqref{eq_kirbyplus}.
\end{proof}

The preceding corollary shows in particular that our proofs of
Theorem~\ref{thm_invariant} and Proposition~\ref{prop_rtinvariance} can be
combined to a new proof of invariance of the Reshetikhin--Turaev invariant,
\cf~\cite[Section II.3]{Tu10}.

\subsection{Contrast with the Lyubashenko invariant}

Let us finally point out what is the difference between our invariant $I(M_L)$
(Theorem~\ref{thm_invariant}) for the WHA $H=\coend(\sym{C},\omega)$
reconstructed from a modular category $\sym{C}$ and Lyubashenko's
invariant~\cite{Ly95} that uses Majid's coend
$F=\coend(\sym{C},1_\sym{C})$. Although these two invariants take the same
numerical value whenever each of them agrees with the Reshetikhin--Turaev
invariant, their computation is rather different.

Whereas our $H$ is a vector space equiped with linear structure maps that make
it a WHA, Lyubashenko's $F$ is a Hopf algebra object $H\in|\sym{C}|$, \ie\ an
object of $\sym{C}$ whose structure maps are morphisms in $\sym{C}$.

Let, for example, $\sym{C}_3$ be a modular category associated with
$U_q(\mathrm{sl}_2)$ with $3$ simple objects up to isomorphism. We denote
representatives of the classes of simple objects by $X_0\cong\one$, $X_1$ and
$X_2$. The fusion rules are  given by $X_1\otimes X_1\cong X_0\oplus X_2$,
$X_1\otimes X_2\cong X_1$ and $X_2\otimes X_2\cong X_0$. We know that
$\sym{C}_3\simeq\sym{M}^H$ for our reconstructed WHA.

Lyubashenko's coend is the $H$-comodule
\begin{equation*}
F\cong (X_0\otimes X_0^\ast) \oplus (X_1\otimes X_1^\ast) \oplus (X_2\otimes X_2^\ast)
\cong 3X_0 \oplus X_2,
\end{equation*}
where we write $nX=X\oplus\cdots\oplus X$ (direct sum of $n$ terms). Note that
its decomposition does not contain $X_1$ and that the $X_1$-term is included
as $X_1\otimes X_1^\ast\hookrightarrow X_0\oplus X_2$ (as $H$-comodules). My
coend, however, is $H$ itself with the regular coaction, \ie\ the following
$H$-comodule:
\begin{equation*}
H\cong 3X_0 \oplus 4X_1 \oplus 3X_2.
\end{equation*}
Therefore, obviously, the two coends differ as objects of $\sym{M}^H$ and also
have different $k$-dimensions. Note that $\dim_k X_0=3$, $\dim_k X_1=4$ and
$\dim_k X_2=3$ in $\sym{C}_3\simeq\sym{M}^H$.

\appendix
\section{Weak Hopf Algebras and their corepresentations}
\label{app_wha}

In this appendix, we collect the relevant definitions on WHAs with
additional structure.

\subsection{Weak Bialgebras}
\label{app_wba}

Given a WHA $H$ over some field $k$, the source and target
\emph{counital maps} are given by
\begin{eqnarray}
\epsilon_s &:=&
(\id_H\otimes\epsilon)\circ(\id_H\otimes\mu)\circ(\tau_{H,H}\otimes\id_H)
\circ(\id_H\otimes\Delta)\circ(\id_H\otimes\eta)\colon H\to H,\\
\epsilon_t &:=&
(\epsilon\otimes\id_H)\circ(\mu\otimes\id_H)\circ(\id_H\otimes\tau_{H,H})
\circ(\Delta\otimes\id_H)\circ(\eta\otimes\id_H)\colon H\to H.
\end{eqnarray}
Here $\tau_{V,W}(v\otimes w)=w\otimes v$, $v\in V$, $w\in W$, denotes
the symmetric braiding in $\Vect_k$. The mutually commuting
subalgebras $H_s:=\epsilon_s(H)$ and $H_t:=\epsilon_t(H)$ are called
the source and target \emph{base algebra} of $H$, respectively.

A WBA [WHA] is a bialgebra [Hopf algebra] if and only if
$\epsilon_s=\eta\circ\epsilon$, if and only if
$\epsilon_t=\eta\circ\epsilon$, if and only if $H_s\cong k$, and if
and only if $H_t\cong k$.

If $H$ is a finite-dimensional WBA [WHA], then so is its dual vector
space $H^\ast$. Every finite-dimensional WHA $H$ has an invertible
antipode.

We abbreviate $\eta(1)=1$ and use Sweedler's notation
$\Delta(x)=x^\prime\otimes x^\pprime$ for the comultiplication of
$x\in H$ and $\beta(v)=v_0\otimes v_1$ for the coaction $\beta\colon
V\to V\otimes H$ of a right $H$-comodule $V$. If $H$ is
finite-dimensional, we also use the Sweedler arrows
$x\rightharpoonup\phi=\phi^\prime(x)\phi^\pprime$ and
$\phi\rightharpoonup x=x^\prime\phi(x^\pprime)$ for $x\in H$, $\phi\in
H^\ast$.

The category $\sym{M}^H$ of finite-dimensional right $H$-comodules of a WBA
forms a $k$-linear abelian monoidal category
$(\sym{M}^H,\otimes,\one,\alpha,\lambda,\rho)$ as follows~\cite{Pf09a}. The
source base algebra $H_s$ is a right $H$-comodule with
\begin{equation}
\beta_{H_s}\colon H_s\to H_s\otimes H,\qquad
x\mapsto x^\prime\otimes x^\pprime.
\end{equation}
It forms the monoidal unit object, $\one=H_s$ of $\sym{M}^H$.
Given $V,W\in|\sym{M}^H|$, their tensor product is the vector space
\begin{equation}
V\otimes W := \{\,v\otimes w\in V\otimes W\mid\quad v\otimes w= (v_0\otimes w_0)\epsilon(v_1w_1)\,\}
\end{equation}
with the coaction
\begin{equation}
\beta_{V\otimes W}\colon V\otimes W\to (V\otimes W)\otimes H,\quad
v\otimes w \mapsto (v_0\otimes w_0)\otimes (v_1w_1).
\end{equation}
The associator $\alpha_{U,V,W}\colon(U\otimes V)\otimes W\to
U\otimes(V\otimes W)$ is induced from that of $\Vect_k$. The left
and right unit constraints are given by
\begin{alignat}{3}
\lambda_V &\colon H_s\otimes V\to V,&&\quad h\otimes v\mapsto v_0\epsilon(hv_1),\\
\rho_V    &\colon V\otimes H_s\to V,&&\quad v\otimes h\mapsto v_0\epsilon(v_1h).
\end{alignat}
For convenience, we list their inverses, too:
\begin{alignat}{3}
\lambda^{-1}_V &\colon V\to H_s\otimes V,&&\quad v\mapsto (1^\prime\otimes v_0)\epsilon(1^\pprime v_1),\\
\rho^{-1}_V    &\colon V\to V\otimes H_s,&&\quad v\mapsto v_0\otimes\epsilon_s(v_1).
\end{alignat}

\subsection{Weak Hopf Algebras}

A \emph{left-autonomous}~\cite{FrYe92} category $\sym{C}$ is a
monoidal category in which every object $X\in|\sym{C}|$ is equipped
with a \emph{left-dual} $(X^\ast,\ev_X,\coev_X)$, \ie\ an object
$X^\ast\in|\sym{C}|$ and morphisms $\ev_X\colon X^\ast\otimes
X\to\one$ (\emph{left evaluation}) and $\coev_X\colon\one\to X\otimes
X^\ast$ (\emph{left coevaluation}) that satisfy the triangle
identities
\begin{eqnarray}
\rho_X\circ(\id_X\otimes\ev_X)\circ\alpha_{X,X^\ast,X}
\circ(\coev_X\otimes\id_X)\circ\lambda_X^{-1} &=& \id_X,\\
\lambda_{X^\ast}\circ(\ev_X\otimes\id_{X^\ast})\circ\alpha^{-1}_{X^\ast,X,X^\ast}
\circ(\id_{X^\ast}\otimes\coev_X)\circ\rho_{X^\ast}^{-1} &=& \id_{X^\ast}.
\end{eqnarray}
In a left-autonomous category $\sym{C}$, the \emph{left-dual} of a
morphism $f\colon X\to Y$ is defined as
\begin{equation}
f^\ast := \lambda_{X^\ast}\circ(\ev_Y\otimes\id_{X^\ast})\circ\alpha^{-1}_{Y^\ast,Y,X^\ast}
\circ(\id_{Y^\ast}\otimes(f\otimes\id_{X^\ast}))\circ(\id_{Y^\ast}\otimes\coev_X)\circ\rho^{-1}_{Y^\ast}.
\end{equation}

Let $\sym{C}$ be a left-autonomous category. The functor
$\ast\colon\sym{C}\to\sym{C}^\op$ that sends each object and each
morphism to their left-dual, is strong monoidal.

Let $H$ be a WHA. The category $\sym{M}^H$ is \emph{left-autonomous}
as follows. For each $V\in|\sym{M}^H|$, the dual vector space $V^\ast$
forms a right $H$-comodule with
\begin{equation}
\label{eq_dualaction}
\beta_{V^\ast}\colon V^\ast\to V^\ast\otimes H,\qquad
\theta\mapsto (v\mapsto \theta(v_0)\otimes S(v_1)).
\end{equation}
The left-dual of $V$ is given by $(V^\ast,\ev_V,\coev_V)$ where
\begin{alignat}{3}
\label{eq_ev}
\ev_V   &\colon V^\ast\otimes V\to H_s,&&\quad \theta\otimes v\to\theta(v_0)\epsilon_s(v_1),\\
\label{eq_coev}
\coev_V &\colon H_s\to V\otimes V^\ast,&&\quad x\mapsto\sum_j ({(e_j)}_0\otimes e^j)\epsilon(x{(e_j)}_1).
\end{alignat}
Here we have used the evaluation and coevaluation maps that turn
$V^\ast$ into a left-dual of $V$ in $\fdVect_k$:
\begin{alignat}{3}
\ev_V^{(\fdVect_k)}   &\colon V^\ast\otimes V\to k,&&\quad \theta\otimes v\mapsto \theta(v),\\
\coev_V^{(\fdVect_k)} &\colon k\to V\otimes V^\ast,&&\quad 1\mapsto \sum_j e_j\otimes e^j.
\end{alignat}

\subsection{Copivotal Weak Hopf Algebras}

Let $H$ be a WBA. A linear form $f\colon H\to k$ is said to be
\emph{convolution invertible} if there exists some linear form $\bar
f\colon H\to k$ such that $f(x^\prime)\bar
f(x^\pprime)=\epsilon(x)=\bar f(x^\prime)f(x^\pprime)$ for all $x\in
H$. The linear form $f$ is called \emph{dual central} if
$f(x^\prime)x^\pprime = x^\prime f(x^\pprime)$ for all $x\in H$. It is
called \emph{dual group-like} if $\epsilon(x^\prime
y^\prime)f(x^\pprime)f(y^\pprime) = f(xy) =
f(x^\prime)f(y^\prime)\epsilon(x^\pprime y^\pprime)$ for all $x,y\in
H$ and $f(\epsilon_t(x))=\epsilon(x)=f(\epsilon_s(x))$ for all $x\in
H$. Note that in a WHA, every dual group-like linear form is
convolution invertible with $\bar f(x)=f(S(x))$.

A WHA $H$ is called \emph{copivotal}~\cite{Pf09b} if there exists a
dual group-like linear form $w\colon H\to k$ such that $S^2(x) =
w(x^\prime)x^\pprime\bar w(x^\ppprime)$ for all $x\in H$.

A \emph{pivotal} category~\cite{FrYe92} $\sym{C}$ is a left-autonomous
category with a monoidal natural equivalence $\tau\colon
1_\sym{C}\Rightarrow \ast\circ\ast$ such that ${(\tau_X)}^\ast =
\tau_{X^\ast}^{-1}$ for all $X\in|\sym{C}|$.

Given a copivotal WHA $H$, the category $\sym{M}^H$ is pivotal with
$\tau_V\colon V\to {V^\ast}^\ast$ given by
\begin{equation}
\tau_V(v) = \tau_V^{(\fdVect_k)}(v_0)w(v_1)
\end{equation}
for all $V\in|\sym{M}^H|$ and $v\in V$. Here we denote by
$\tau_V^{(\fdVect_k)}\colon V\to {V^\ast}^\ast$ the pivotal structure of
$\fdVect_k$ which is just the usual canonical identification $V\cong
{V^\ast}^\ast$.

A \emph{right-autonomous} category $\sym{C}$ is a monoidal category in
which every object $X\in|\sym{C}|$ is equipped with a \emph{right-dual}
$(\bar X,\bar\ev_X,\bar\coev_X)$, \ie\ an object $\bar X\in|\sym{C}|$
with morphisms $\bar\ev_X\colon X\otimes\bar X\to\one$ (\emph{right
evaluation}) and $\bar\coev_X\colon\one\to\bar X\otimes X$
(\emph{right coevaluation}) that satisfy the triangle identities
\begin{eqnarray}
\lambda_X\circ(\bar\ev_X\otimes\id_X)\circ\alpha^{-1}_{X,\bar X,X}
\circ(\id_X\otimes\bar\coev_X)\circ\rho_X^{-1} &=& \id_X,\\
\rho_{\bar X}\circ(\id_{\bar X}\otimes\bar\ev_X)\circ\alpha_{\bar X,X,\bar X}
\circ(\bar\coev_X\otimes\id_{\bar X})\circ\lambda^{-1}_{\bar X} &=& \id_{\bar X}.
\end{eqnarray}

Note that every pivotal category $\sym{C}$ is not only left-, but also
right-autonomous with $\bar X=X^\ast$ and
\begin{eqnarray}
\label{eq_barev}
\bar\ev_X   &=& \ev_{X^\ast}\circ(\tau_X\otimes\id_{X^\ast}),\\
\label{eq_barcoev}
\bar\coev_X &=& (\id_{X^\ast}\otimes\tau_X^{-1})\circ\coev_{X^\ast}
\end{eqnarray}
for all $X\in|\sym{C}|$. We can therefore define the \emph{right-dual}
of a morphism $f\colon X\to Y$ as
\begin{equation}
\bar f:=\rho_{\bar X}\circ(\id_{\bar X}\otimes\bar\ev_Y)\circ\alpha_{\bar X,Y,\bar Y}
\circ((\id_{\bar X}\otimes f)\otimes\id_{\bar Y})\circ(\bar\coev_X\otimes\id_{\bar Y})\circ\lambda^{-1}_{\bar Y}.
\end{equation}
It can be shown to agree with the left-dual, \ie\ $\bar f=f^\ast$.

Using both left- and right-duals, we can define two traces of a morphism $f\colon X\to Y$, the
\emph{left-trace}
\begin{equation}
\tr^{(L)}_X(f) = \ev_X\circ(\id_{X^\ast}\otimes f)\circ\bar\coev_X\colon\one\to\one
\end{equation}
and the \emph{right-trace}
\begin{equation}
\tr^{(R)}_X(f) = \bar\ev_X\circ(f\otimes\id_{X^\ast})\circ\coev_X\colon\one\to\one.
\end{equation}
Both left- and right-traces are cyclic, \ie\ $\tr^{(L)}_X(g\circ
f)=\tr^{(L)}_Y(f\circ g)$ for all $f\colon X\to Y$ and $g\colon Y\to
X$ and similarly for the right-trace. In general, however, left- and
right-traces need not agree.

\subsection{Cospherical Weak Hopf Algebras}

A \emph{spherical category}~\cite{BaWe99} is a pivotal category in
which $\tr^{(L)}_X(f) = \tr^{(R)}_X(f)$ for all morphisms $f\colon
X\to X$ in $\sym{C}$. In this case, the above expression is just
called the \emph{trace} of $f$ and denoted by $\tr_X(f)$, and $\dim(X)
= \tr_X(\id_X)$ is called the \emph{dimension} of $X$. Note that in a
spherical category, $\tr_X(f)=\tr_{X^\ast}(f^\ast)$ for every morphism
$f\colon X\to X$ and thus $\dim(X)=\dim(X^\ast)$. Finally,
$\tr_{X_1\otimes X_2}(h_1\otimes h_2)=\tr_{X_1}(h_1)\tr_{X_2}(h_2)$
for all $h_j\colon X_j\to X_j$, $j\in\{1,2\}$.

A \emph{cospherical} WHA~\cite{Pf09b} $H$ is a copivotal WHA for which
$\tr^{(L)}_V(f) = \tr^{(R)}_V(f)$ for all morphisms $f\colon V\to V$
of $\sym{M}^H$. If $H$ is a cospherical WHA, then $\sym{M}^H$ is
therefore spherical.

\subsection{Coquasitriangular Weak Hopf Algebras}

A \emph{coquasitriangular} WHA~\cite{Pf09a} is a WHA with a linear
form $r\colon H\otimes H\to k$, the \emph{universal $r$-form}, that
satisfies the following conditions:
\begin{myenumerate}
\item
For all $x,y\in H$,
\begin{equation}
\label{eq_coquasidef}
\epsilon(x^\prime y^\prime)r(x^\pprime\otimes y^\pprime)=r(x\otimes y)
=r(x^\prime\otimes y^\prime)\epsilon(y^\pprime x^\pprime).
\end{equation}
\item
There exists some linear $\bar r\colon H\otimes H\to k$ that is a
weak convolution inverse of $r$, \ie\
\begin{eqnarray}
\label{eq_coquasiinv1}
\bar r(x^\prime\otimes y^\prime)r(x^\pprime\otimes y^\pprime)&=&\epsilon(yx),\\
\label{eq_coquasiinv2}
r(x^\prime\otimes y^\prime)\bar r(x^\pprime\otimes y^\pprime)&=&\epsilon(xy).
\end{eqnarray}
\item
For all $x,y,z\in H$,
\begin{eqnarray}
x^\prime y^\prime r(x^\pprime\otimes y^\pprime)
&=&r(x^\prime\otimes y^\prime)y^\pprime x^\pprime,\\
r((xy)\otimes z)&=&r(y\otimes z^\prime) r(x\otimes z^\pprime),\\
r(x\otimes (yz))&=&r(x^\prime\otimes y) r(x^\pprime\otimes z).
\end{eqnarray}
\end{myenumerate}
Note that $\bar r$ in~(2) is uniquely determined by $r$ if one
imposes~\eqref{eq_coquasidef}, \eqref{eq_coquasiinv1}
and~\eqref{eq_coquasiinv2}.

In a coquasitriangular WHA $H$, we define the linear form $q\colon H\otimes H\to k$ by
\begin{equation}
\label{eq_defq}
q(x\otimes y) = r(x^\prime\otimes y^\prime)r(y^\pprime\otimes x^\pprime),
\end{equation}
for all $x,y\in H$. Its weak convolution inverse $\bar q\colon
H\otimes H\to k$ is then given by
\begin{equation}
\label{eq_defbarq}
\bar q(x\otimes y) = \bar r(y^\prime\otimes x^\prime)\bar r(x^\pprime\otimes y^\pprime),
\end{equation}
for all $x,y\in H$. The \emph{dual Drinfel'd elements} are the linear
forms $u\colon H\to k$ and $v\colon H\to k$ given by
$u(x)=r(S(x^\pprime)\otimes x^\prime)$ and $v(x)=r(S(x^\prime)\otimes
x^\pprime)$ for all $x\in H$.

A \emph{braided monoidal category} $\sym{C}$ is a monoidal category
with natural isomorphisms $\sigma_{X,Y}\colon X\otimes Y\to Y\otimes
X$ for all $X,Y\in|\sym{C}|$ that satisfy the two hexagon axioms
\begin{eqnarray}
\sigma_{X\otimes Y,Z} &=& \alpha_{Z,X,Y}\circ(\sigma_{X,Z}\otimes\id_Y)
\circ\alpha^{-1}_{X,Z,Y}\circ(\id_X\otimes\sigma_{Y,Z})\circ\alpha_{X,Y,Z},\\
\sigma_{X,Y\otimes Z} &=& \alpha^{-1}_{Y,Z,X}\circ(\id_Y\otimes\sigma_{X,Z})
\circ\alpha_{Y,X,Z}\circ(\sigma_{X,Y}\otimes\id_Z)\circ\alpha^{-1}_{X,Y,Z}
\end{eqnarray}
for all $X,Y,Z\in|\sym{C}|$.

If $H$ is a coquasitriangular WHA, then the category $\sym{M}^H$ is
braided monoidal with braiding
$\sigma_{V,W}\colon V\otimes W\to W\otimes V$ given by
\begin{equation}
\label{eq_braiding}
\sigma_{V,W}(v\otimes w)=(w_0\otimes v_0)r(w_1\otimes v_1)
\end{equation}
for all $V,W\in|\sym{M}^H|$ and $v\in V$, $w\in W$. Note that
\begin{equation}
\label{eq_defQ}
Q_{V,W}=\sigma_{W,V}\circ\sigma_{V,W}
\end{equation}
can be computed as $Q_{V,W}(v\otimes w)=(v_0\otimes w_0)q(v_1\otimes w_1)$ for
all $v\in V$, $w\in W$, and similarly $Q_{V,W}^{-1}(v\otimes w)=(v_0\otimes
w_0)\bar q(v_1\otimes w_1)$.

\subsection{Coribbon Weak Hopf Algebras}
\label{app_coribbon}

A \emph{coribbon} WHA~\cite{Pf09a} is a coquasitriangular WHA with a
convolution invertible and dual central linear form $\nu\colon H\to
k$, the \emph{universal ribbon twist}, such that
\begin{eqnarray}
\label{eq_ribbontensor}
\nu(xy)&=&\nu(x^\prime)\nu(y^\prime)q(x^\pprime\otimes y^\pprime),\\
\nu(S(x))&=&\nu(x)
\end{eqnarray}
all $x,y\in H$.

A \emph{ribbon category} is a braided monoidal category that is
left-autonomous with natural isomorphisms $\nu_X\colon X\to X$, the
\emph{ribbon twist}, such that
\begin{equation}
\nu_{X\otimes Y} = \sigma_{Y,X}\circ\sigma_{X,Y}\circ(\nu_X\otimes\nu_Y)
\end{equation}
and
\begin{equation}
(\nu_X\otimes\id_{X^\ast})\circ\coev_X = (\id_X\otimes\nu_{X^\ast})\circ\coev_X
\end{equation}
for all $X,Y\in|\sym{C}|$.

Note that every ribbon category is pivotal with $\tau_X\colon X\to
{X^\ast}^\ast$ given by
\begin{equation}
\tau_X = \lambda_{{X^\ast}^\ast}\circ(\ev_X\otimes\id_{{X^\ast}^\ast})
\circ(\sigma_{X,X^\ast}\otimes\id_{{X^\ast}^\ast})
\circ(\nu_X\otimes\coev_{X^\ast})\circ\rho_X^{-1},
\end{equation}
and furthermore spherical. For convenience, we give the right
evaluation and coevaluation of~\eqref{eq_barev}
and~\eqref{eq_barcoev}:
\begin{eqnarray}
\label{eq_barevribbon}
\bar\ev_X   &=& \ev_X\circ\sigma_{X,X^\ast}\circ(\nu_X\otimes\id_{X^\ast}),\\
\label{eq_barcoevribbon}
\bar\coev_X &=& (\id_{X^\ast}\otimes\nu_X)\circ\sigma_{X,X^\ast}\circ\coev_X.
\end{eqnarray}

If $H$ is a coribbon WHA, then $\sym{M}^H$ is a ribbon category with
ribbon twist
\begin{equation}
\label{eq_twist}
\nu_V\colon V\to V,\qquad v\mapsto v_0\nu(v_1),
\end{equation}
for all $V\in|\sym{M}^H|$ and $v\in V$. Every coribbon WHA is
cospherical with the copivotal form $w(x)=v(x^\prime)\nu(x^\pprime)$
for all $x\in H$, involving the second Drinfel'd element and the
universal ribbon form.

Using the definition of a ribbon category as in this Appendix, we can
draw the corresponding string diagrams. If we draw composition from
top to bottom and the tensor product from left to right, we arrive at
the diagrams shown in Section~\ref{sect_evaluation}.

\subsection{Cosemisimple Weak Hopf Algebras and fusion categories}
\label{sect_fusion}

A monoidal category $\sym{C}$ is called $k$-linear over some field $k$
if the underlying category is $k$-linear, \ie\ enriched in the
category $\Vect_k$, and the tensor product of morphisms is
$k$-bilinear. A $k$-linear category is called \emph{additive} if it
has a terminal object and all binary products. Such a category
automatically has all finite biproducts. A $k$-linear category is
\emph{abelian} if it is additive, has all finite limits, and if every
monomorphism is a kernel and every epimorphism a cokernel. Note that
in $k$-linear pivotal categories, the traces
$\tr_X^{(L)},\tr_X^{(R)}\colon\End(X)\to\End(\one)$ are $k$-linear.

In a $k$-linear additive category $\sym{C}$, we call an object
$X\in|\sym{C}|$ \emph{simple} if $\End(X)\cong k$. A $k$-linear
additive monoidal category is called \emph{pure} if the monoidal unit
$\one$ is simple. A $k$-linear additive category is called \emph{split
semisimple} if every object $X\in|\sym{C}|$ is isomorphic to a
finite biproduct of simple objects. It is called \emph{finitely split
semisimple} if it is split semisimple and there exist only a finite
number of simple objects up to isomorphism. If $\sym{C}$ is a
$k$-linear additive category that is split semisimple, we denote by
${\{V_j\}}_{j\in I}$ a family of representatives of the isomorphism
classes of the simple objects $V_j$, $j\in I$, of $\sym{C}$, indexed
by the set $I$.

A \emph{multi-fusion category} $\sym{C}$ over $k$, see, for
example~\cite{EtNi05}, is a finitely split semisimple $k$-linear
additive autonomous monoidal category such that $\Hom(X,Y)$ is
finite-dimensional over $k$ for all $X,Y\in|\sym{C}|$. A \emph{fusion
category} over $k$ is a pure multi-fusion category. Note that every
multi-fusion category is abelian and essentially small, and that in
every multi-fusion category, if $X\in|\sym{C}|$ is simple, then so is
$X^\ast$.

If $C$ is a coalgebra and $V$ a finite-dimensional right $C$-comodule
with coaction $\beta\colon V\to V\otimes C$ and basis ${(v_j)}_j$,
then there are elements $c^{(V)}_{\ell j}\in C$ uniquely determined by
the condition that $\beta_V(v_j)=\sum_\ell v_\ell\otimes c^{(V)}_{\ell
j}$. They are called the \emph{coefficients of} $V$ with respect to
that basis. They span the \emph{coefficient coalgebra}
$C(V)=\Span_k\{c^{(V)}_{\ell j}\}$, a subcoalgebra of $C$.

Let $W$ be a finite-dimensional vector space over $k$ with dual space
$W^\ast$ and a pair of dual bases ${(e_j)}_j$ and ${(e^j)}_j$ of $W$
and $W^\ast$, respectively. We abbreviate $c^{(W)}_{jk}=e^j\otimes
e_k\in W^\ast\otimes W$. The coalgebra $(W^\ast\otimes
W,\Delta,\epsilon)$ with $\Delta(c^{(W)}_{jk})=\sum_\ell
c^{(W)}_{j\ell}\otimes c^{(W)}_{\ell k}$ and
$\epsilon(c^{(W)}_{jk})=\delta_{jk}$ is called the \emph{matrix
coalgebra} associated with $W$. In this case, $W$ is a right
$W^\ast\otimes W$-comodule, and $W^\ast\otimes W$ is its coefficient
coalgebra.

A coalgebra $C$ is called \emph{cosimple} if $C$ has no subcoalgebras
other than $C$ and $\{0\}$. The coalgebra $C$ is called
\emph{cosemisimple} if it is a coproduct in $\Vect_k$ of cosimple
coalgebras. The coalgebra $C$ is called \emph{split cosemisimple} if
it is cosemisimple and every cosimple subcoalgebra is a matrix
coalgebra. A right $C$-comodule $V$ of some coalgebra $C$ is called
\emph{irreducible} if $V\neq\{0\}$ and $V$ has no subcomodules other
than $V$ and $\{0\}$.

If $H$ is a WHA over some field $k$, then $\sym{M}^H$ is a $k$-linear
abelian autonomous monoidal category such that $\Hom(X,Y)$ is
finite-dimensional over $k$ for all $X,Y\in|\sym{C}|$. If $H$ is in
addition [finite-dimensional and] split cosemisimple, then $\sym{M}^H$
is [finitely] split semisimple. A WHA is called \emph{copure} if its
base algebras intersect trivially, \ie\ if $H_s\cap H_t\cong k$, see,
for example~\cite{Pf09a}. In this case, $\sym{M}^H$ is pure.

A WHA over $k$ is said to be \emph{multi-fusion} if it is
finite-dimensional and split cosemisimple. It is called \emph{fusion}
if it is in addition copure. In these cases, $\sym{M}^H$ is multi-fusion
or fusion over $k$, respectively.

\subsection{Comodular Weak Hopf Algebras}

Let $\sym{C}$ be a ribbon category and $V,W\in|\sym{C}|$. We call the
evaluation of the Hopf link with components labeled by $V$ and $W$,
\begin{equation}
S_{V,W} := \tr_{V\otimes W}(Q_{V,W})\in\End(\one).
\end{equation}
If $\sym{C}$ is in addition multi-fusion with a family ${\{V_j\}}_{j\in I}$ of
representatives of the isomorphism classes of objects, we write
$S_{j\ell}:=S_{V_jV_\ell}$, $j,\ell\in I$. If $\sym{C}$ is fusion,
$\End(\one)\cong k$, and so $S_{j\ell}\in k$. A \emph{modular category} is a
fusion category that has the structure of a ribbon category and for which the
$|I|\times|I|$-matrix with coefficients $S_{j\ell}$, $j,\ell\in I$, is
non-degenerate.

Let $H$ be a copure coribbon WHA with copivotal form $w\colon H\to k$ and
$V\in|\sym{M}^H|$, $n:=\dim_kV$. The \emph{dual character} of $V$ is the
element
\begin{equation}
\label{eq_dualchar}
\chi_V=\sum_{j=1}^n c^{(V)}_{jj}\in H,
\end{equation}
and the \emph{dual quantum character} the element
\begin{equation}
\label{eq_dualqchar}
T_V=\sum_{j,\ell=1}^n c^{(V)}_{j\ell}w(c^{(V)}_{\ell j})\in H.
\end{equation}
We denote the space of dual quantum characters of $H$ by
\begin{equation}
T(H) = \Span_k\{\,T_V\colon\quad V\in|\sym{M}^H|\,\}.
\end{equation}

If $H$ is a copure coribbon WHA and the morphism $f^{(\gamma)}_{V,W}\colon
V\otimes W\to V\otimes W$ is of the form
\begin{equation}
f^{(\gamma)}_{V,W}
=(\id_{V\otimes W}\otimes\gamma)\circ(\id_V\otimes\tau_{H,W}\otimes\id_H)
\circ(\beta_V\otimes\beta_W)
\end{equation}
with a linear form $\gamma\colon H\otimes H\to k$ that satisfies
\begin{eqnarray}
x^\prime y^\prime\gamma(x^\pprime\otimes y^\pprime)
&=& \gamma(x^\prime\otimes y^\prime)x^\pprime y^\pprime,\\
\epsilon(x^\prime y^\prime)\gamma(x^\pprime\otimes y^\pprime)
&=& \gamma(x\otimes y)
\end{eqnarray}
for all $x,y\in H$, then
\begin{equation}
\tr_{V\otimes W}(f^{(\gamma)}_{V,W}) = c^{(\gamma)}_{V,W}\,\id_{H_s},
\end{equation}
where the element $c^{(\gamma)}_{V,W}\in k$ is determined by
\begin{equation}
\gamma(T_V^\prime\otimes T_W^\prime)\epsilon_s(S(T_V^\pprime T_W^\pprime))
= c^{(\gamma)}_{V,W}\eta(1).
\end{equation}

Given a copure coribbon WHA, we can therefore define a linear form $\tilde
q\colon T(H)\otimes T(H)\to k$, $T_V\otimes T_W\to \tilde q_{V,W}$ where the
$\tilde q_{V,W}\in k$ are determined by
\begin{equation}
q(T_V^\prime\otimes T_W^\prime)\epsilon_s(S(T_V^\pprime T_W^\pprime))
= \tilde q_{V,W}\,\eta(1).
\end{equation}
$H$ is called \emph{weakly cofactorizable} if every linear form $\phi\colon
T(H)\to k$ can be written as $\phi(-)=\tilde q(-\otimes x)$ for some $x\in
T(H)$.

A \emph{comodular} WHA~\cite{Pf09a} is a coribbon WHA that is fusion and
weakly cofactorizable. If $H$ is a comodular WHA, then $\sym{M}^H$ is a
modular category~\cite{Pf09a}.

\section{Tannaka--Kre\v\i n reconstruction for a fusion categories}
\label{app_reconstruction}

\subsection{Fusion categories}

Let $\sym{C}$ be a multi-fusion category (see Appendix~\ref{sect_fusion}) over
some field $k$. By ${\{V_j\}}_{j\in I}$ we denote a (finite) set of
representatives of the isomorphism classes of simple objects of $\sym{C}$. We
use the small progenerator $\hat V=\bigoplus_{j\in I}V_j$. The \emph{long
canonical functor}
\begin{eqnarray}
\omega\colon\sym{C}\to\Vect_k,\quad X &\mapsto&\Hom(\hat V,\hat V\otimes X),\\
f &\mapsto& (\id_{\hat V}\otimes f)\circ-,\nn
\end{eqnarray}
is $k$-linear, faithful and exact~\cite{Ha99b}, takes values in
$\fdVect_k$, and has a separable Frobenius
structure~\cite{Sz05,Pf09a,Pf11}.

The algebra $R:=\End(\hat V)\cong\omega\one\cong k^{|I|}$ has a basis
${(\lambda_j)}_{j\in I}$ of orthogonal idempotents given by
$\lambda_j=\id_{V_j}\in R$, $j\in I$. It forms a Frobenius algebra
$(R,\circ,\id_R,\Delta_R,\epsilon_R)$ with comultiplication
$\Delta_R\colon R\to R\otimes R$ and counit $\epsilon_R\colon R\to k$
given by $\Delta(\lambda_j)=\lambda_j\otimes\lambda_j$ and
$\epsilon(\lambda_j)=1$ for all $j\in I$. The element $\Delta(\id_R)$
is a separability idempotent. Such a Frobenius algebra is called
\emph{index one} or \emph{Frobenius separable}~\cite{KaSz03}.

The category $\sym{C}$ is equipped with a family of non-degenerate
bilinear forms
\begin{equation}
\label{eq_gx}
g_X\colon\Hom(\hat V\otimes X,\hat V)\otimes\Hom(\hat V,\hat V\otimes X)\to k,\qquad
\theta\otimes v\mapsto\epsilon_R(\theta\circ v),
\end{equation}
which is associative, \ie\ compatible with composition,
$g_X((\theta\circ\omega f)\otimes v)=g_Y(\theta\otimes(\omega f\circ
v))$ for all $v\colon\hat V\to\hat V\otimes X$, $\theta\colon\hat
V\otimes Y\to\hat V$ and $f\colon X\to Y$. We use $g_X$ in order to
identify ${(\omega X)}^\ast\cong\Hom_k(\hat V\otimes X,\hat V)$.

By ${(e_m^{(X)})}_m$ and ${(e^m_{(X)})}_m$, we denote a pair of dual
bases of $\omega X=\Hom(\hat V,\hat V\otimes X)$ and $\Hom(\hat
V\otimes X,\hat V)$ with respect to $g_X$. In particular, we can
choose $e_j^{(\one)}=\rho_{\hat V}^{-1}\circ\lambda_j$ and
$e^j_{(\one)}=\lambda_j\circ\rho_{\hat V}$. Many computations are
particularly convenient if performed in these bases.

In addition to the dual basis lemma, \ie\ the triangle identities for
evaluation with the bilinear form $g_X$, the pair of dual bases satisfies
\begin{equation}
\label{eq_dualbasispair}
\sum_je^{(X)}_j\circ e^j_{(X)} = \id_{\hat V}\otimes\id_X
\end{equation}
for all simple $X\in|\sym{C}|$.

By a generalization of Tannaka--Kre\v\i n reconstruction from strong
monoidal functors to functors with separable Frobenius structure, we
obtain a finite-dimensional, split cosemisimple WHA
\begin{equation}
\label{eq_coendvect}
H = \coend(\sym{C},\omega) = \bigoplus_{j\in I}{(\omega V_j)}^\ast\otimes\omega V_j,
\end{equation}
such that $\sym{C}\simeq\sym{M}^H$ are equivalent as $k$-linear
additive monoidal categories. If $\sym{C}$ is fusion, \ie\ pure, then
in addition $H$ is copure, \ie\ $H_s\cap H_t\cong k$. The operations
of $H$ are given as follows~\cite{Pf09a},
\begin{eqnarray}
\mu({[\theta|v]}_X\otimes {[\zeta|w]}_Y)
&=& {[\zeta\circ(\theta\otimes\id_Y)\circ\alpha_{\hat V,X,Y}^{-1}|
\alpha_{\hat V,X,Y}\circ(v\otimes\id_Y)\circ w]}_{X\otimes Y},\\
\eta(1) &=& {[\rho_{\hat V}|\rho_{\hat V}^{-1}]}_\one,\\
\Delta({[\theta|v]}_X)
&=& \sum_j{[\theta|e_j^{(X)}]}_X\otimes {[e^j_{(X)}|v]}_X,\\
\epsilon ({[\theta|v]}_X) &=& \epsilon_R(\theta\circ v),\\
S({[e^j_{(X)}|e_\ell^{(X)}]}_X)
&=& {[\tilde e^\ell_{(X^\ast)}|\tilde e_j^{(X^\ast)}]}_{X^\ast},
\end{eqnarray}
where we write ${[\theta|v]}_X\in{(\omega X)}^\ast\otimes\omega X$
with $v\in\omega X$, $\theta\in{(\omega X)}^\ast$ and simple
$X\in|\sym{C}|$ for the homogeneous elements of $H$. The precise form
of the universal coend as a colimit also allows us to use the same
expression for arbitrary objects of $\sym{C}$, but subject to the
relations that $[{\zeta|(\omega f)(v)]}_Y ={[{(\omega
f)}^\ast(\zeta)|v]}_X$ for all $v\in\omega X$, $\zeta\in{(\omega
Y)}^\ast$ and for all morphisms $f\colon X\to Y$ of
$\sym{C}$. Recall that $(\omega f)(v)=(\id_{\hat V}\otimes f)\circ v$
and ${(\omega f)}^\ast(\zeta)=\zeta\circ(\id_{\hat V}\otimes
f)$. Furthermore, by ${(\tilde e_j^{(X^\ast)})}_j$ we denote the basis
of $\omega(X^\ast)$ defined by
\begin{equation}
\label{eq_antibasis}
\tilde e_j^{(X^\ast)} = \Psi_X(e^j_{(X)}),
\end{equation}
where
\begin{equation}
\label{eq_psi}
\Psi_X(\theta) = (\theta\otimes\id_{X^\ast})
\circ\alpha^{-1}_{\hat V,X,X^\ast}\circ(\id_{\hat V}\otimes\coev_X)
\circ\rho_{\hat V}^{-1}.
\end{equation}
By ${(\tilde e^j_{(X^\ast)})}_j$,we denote its dual basis with respect to the
bilinear form $g_{X^\ast}$, \cf~\eqref{eq_gx}.

The source and target counital maps are given by
\begin{eqnarray}
\epsilon_s({[\theta|v]}_X) &=&
{[\rho_{\hat V}|\rho^{-1}_{\hat V}\circ\theta\circ v]}_\one,\\
\label{eq_epsilont}
\epsilon_t({[\theta|v]}_X) &=&
\sum_\ell{[e^\ell_{(\one)}|\rho^{-1}_{\hat V}]}_\one\,
g_X(\theta\otimes(((\rho_{\hat V}\circ e^{(\one)}_\ell)\otimes\id_X)\circ v)),
\end{eqnarray}
for all simple $X\in|\sym{C}|$ and $\theta\in{(\omega X)}^\ast$, $v\in\omega X$.

Note that if $X\in|\sym{C}|$ is an arbitrary object, then $\omega X$
forms a right-$H$ comodule with the coaction
\begin{equation}
\beta_{\omega X}\colon\omega X\to\omega X\otimes H,\quad
v\mapsto\sum_je^{(X)}_j\otimes{[e^j_{(X)}|v]}_X.
\end{equation}
Its coefficient coalgebra is given by $C(X)={({(\omega
X)}^\ast\otimes\omega X)}/N_X\subseteq H$ where the subspace
$N_X\subseteq{(\omega X)}^\ast\otimes\omega X$ is generated by the
elements
\begin{equation}
{[\theta|(\omega f)(v)]}_X-{[{(\omega f)}^\ast(\theta)|v]}_X
\end{equation}
for all $v\in\omega X$, $\theta\in{(\omega X)}^\ast$ and
$f\in\End(X)$.

\subsection{Additional structure}

If $\sym{C}$ is pivotal with the monoidal natural isomorphism
$\tau_X\colon X\to{X^\ast}^\ast$, then $H=\coend(\sym{C},\omega)$ is
copivotal~\cite{Pf09b} with copivotal form and its convolution inverse
given by
\begin{eqnarray}
\label{eq_copivotal}
w({[\theta|v]}_X)
&=& g_X((D_{\hat V}^{-1}\circ\theta\circ(D_{\hat V}\otimes\id_X))\otimes v),\\
\bar w({[\theta|v]}_X)
&=& g_X(\theta\otimes((D_{\hat V}\otimes\id_X)\circ v\circ D_{\hat V}^{-1})),
\end{eqnarray}
for all $v\in\omega X$, $\theta\in{(\omega X)}^\ast$ and
$X\in|\sym{C}|$. Here
\begin{equation}
\label{eq_dtransformation}
D_{\hat V}=\sum_{j\in I}{(\dim V_j)}^{-1}\,\id_{V_j}\colon \hat V\to\hat V.
\end{equation}
More generally, there is a natural equivalence $D\colon 1_\sym{C}\Rightarrow
1_\sym{C}$ given by isomorphisms $D_X\colon X\to X$ for all $X\in|\sym{C}|$ as
follows. If
\begin{equation}
X\cong\bigoplus_{j\in I}m_jV_j
\end{equation}
with multiplicities $m_j\in\N_0$, then $D_X(v)=(\dim V_j)\,\id_{V_j}$ for all
homogeneous $v\in m_jV_j$.

If $\sym{C}$ is spherical, then $H$ is cospherical~\cite{Pf09b}. In
this case, the bilinear forms $g_X$ are related to the traces in
$\sym{C}$ as follows,
\begin{equation}
\label{eq_bilfspherical}
g_X(\theta\otimes v) = \epsilon_R(\theta\circ v) = \tr_{\hat V}(D_{\hat V}\circ\theta\circ v),
\end{equation}
for all $v\in\omega X$, $\theta\in{(\omega X)}^\ast$, $X\in|\sym{C}|$.

Furthermore, the basis dual to the $\tilde e^{(X^\ast)}_j$
of~\eqref{eq_antibasis} can be computed as
\begin{equation}
\tilde e^j_{(X^\ast)} = \Phi_X(e^{(X)}_j),
\end{equation}
where
\begin{equation}
\Phi_X(v) =
D^{-1}_{\hat V}\circ\rho_{\hat V}\circ(\id_{\hat V}\otimes\bar\ev_X)
\circ\alpha_{\hat V,X,X^\ast}\circ(v\otimes\id_{X^\ast})
\circ(D_{\hat V}\otimes\id_{X^\ast}),
\end{equation}
and~\eqref{eq_epsilont} simplifies to
\begin{equation}
\label{eq_epsilontnew}
\epsilon_t({[\theta|v]}_X) =
{[\Phi_X(v)\circ\Psi_X(\theta)\circ\rho_{\hat V}|\rho^{-1}_{\hat V}]}_\one
\end{equation}
with $\Psi_X$ as in~\eqref{eq_psi}.

If $\sym{C}$ is braided with braiding $\sigma_{X,Y}\colon X\otimes
Y\to Y\otimes X$, then $H$ is coquasi-triangular~\cite{Pf09a} with
universal $r$-form and its weak convolution inverse
\begin{eqnarray}
&&r({[\theta|v]}_X\otimes{[\zeta|w]}_Y)\nn\\
&=& g_{X\otimes Y}((\zeta\circ(\theta\otimes\id_Y)\circ\alpha^{-1}_{\hat V,X,Y})\otimes
((\id_{\hat V}\otimes\sigma_{Y,X})\circ\alpha_{\hat V,Y,X}\circ(w\otimes\id_X)\circ v),\\
&&\bar r({[\theta|v]}_X\otimes{[\zeta|w]}_Y)\nn\\
&=& g_{X\otimes Y}((\theta\circ(\zeta\otimes\id_X)\circ\alpha^{-1}_{\hat V,Y,Z}\circ(\id_{\hat V}\otimes\sigma^{-1}_{Y,X}))\otimes
(\alpha_{\hat V,X,Y}\circ(v\otimes\id_Y)\circ w)),
\end{eqnarray}
for all $v\in\omega X$, $w\in\omega Y$, $\theta\in{(\omega X)}^\ast$,
$\zeta\in{(\omega Y)}^\ast$ and $X,Y\in|\sym{C}|$.

If $\sym{C}$ is ribbon with twist $\nu_X\colon X\to X$, then $H$ is
coribbon~\cite{Pf09a} with universal ribbon form and its convolution
inverse given by
\begin{eqnarray}
\label{eq_nu}
\nu({[\theta|v]}_X)
&=& g_X(\theta\circ((\id_{\hat V}\otimes\nu_X)\circ v)),\\
\label{eq_nubar}
\bar\nu({[\theta|x]}_X)
&=& g_X(\theta\circ((\id_{\hat V}\otimes\nu^{-1}_X)\circ v)),
\end{eqnarray}
for all $v\in\omega X$, $\theta\in{(\omega X)}^\ast$, $X\in|\sym{C}|$.

Every ribbon category is pivotal, and in this case, the copivotal form
is given by $w(x)=v(x^\prime)\nu(x^\pprime)$ where $v\colon H\to k,
x\mapsto r(S(x^\prime)\otimes x^\pprime)$ denotes the second dual
Drinfel'd element. If $\sym{C}$ is modular, then $H$ is fusion,
coribbon and weakly cofactorizable~\cite{Pf09a}.

\newenvironment{hpabstract}{%
\renewcommand{\baselinestretch}{0.2}
\begin{footnotesize}%
}{\end{footnotesize}}%
\newcommand{\hpeprint}[2]{%
\href{http://www.arxiv.org/abs/#1}{\texttt{arxiv:#1#2}}}%
\newcommand{\hpspires}[1]{%
\href{http://www.slac.stanford.edu/spires/find/hep/www?#1}{SPIRES Link}}%
\newcommand{\hpmathsci}[1]{%
\href{http://www.ams.org/mathscinet-getitem?mr=#1}{\texttt{MR #1}}}%
\newcommand{\hpdoi}[1]{%
\href{http://dx.doi.org/#1}{\texttt{DOI #1}}}%
\newcommand{\hpjournal}[2]{%
\href{http://dx.doi.org/#2}{\textsl{#1\/}}}%

\end{document}